\long\def\symbolfootnote[#1]#2{\begingroup\def\thefootnote{\fnsymbol{footnote}}\footnote[#1]{#2}\endgroup}
\newtheorem{theorem}{Theorem}[section]
\newtheorem{lemma}[theorem]{Lemma}
\theoremstyle{remark}
\theoremstyle{definition}
\newtheorem{claim}[theorem]{Claim}
\numberwithin{equation}{section}
\begin{document}
\author{Ovidiu Munteanu and Jiaping Wang}
\title[Weighted Laplacian and applications]{Analysis of weighted Laplacian
and applications to Ricci solitons}
\date{}

\begin{abstract}
We study both function theoretic and spectral properties of the weighted
Laplacian $\Delta_f$ on complete smooth metric measure space $(M,g,e^{-f}dv)$
with its Bakry-\'{E}mery curvature $Ric_f$ bounded from below by a constant.
In particular, we establish a gradient estimate for positive $f-$harmonic
functions and a sharp upper bound of the bottom spectrum of $\Delta_f$ in
terms of the lower bound of $Ric_{f}$ and the linear growth rate of $f.$ We
also address the rigidity issue when the bottom spectrum achieves its
optimal upper bound under a slightly stronger assumption that the gradient
of $f$ is bounded.

Applications to the study of the geometry and topology of gradient Ricci
solitons are also considered. Among other things, it is shown that the
volume of a noncompact shrinking Ricci soliton must be of at least linear
growth. It is also shown that a nontrivial expanding Ricci soliton must be
connected at infinity provided its scalar curvature satisfies a suitable
lower bound.
\end{abstract}

\maketitle

\section{Introduction}

\symbolfootnote[0]{The first author has been partially supported by NSF grant No. DMS-1005484}

In our previous paper \cite{MW}, we have studied some function theoretic and
spectral properties of the weighted Laplacian on a smooth metric measure
space with nonnegative Bakry-\'{E}mery curvature. We have also applied the
results to conclude a nontrivial steady gradient Ricci soliton must be
connected at infinity. The purpose of this sequel to \cite{MW} is twofold.
The first is to continue our study of the weighted Laplacian on a smooth
metric measure space, now under the more general assumption that its Bakry-%
\'{E}mery curvature is bounded from below by a negative constant. The second
is to demonstrate that the results and techniques from such study lead to
geometric and topological information of shrinking and expanding gradient
Ricci solitons.

Recall that a smooth metric measure space, denoted by $\left(M,g,e^{-f}dv%
\right)$ throughout the paper, is a Riemannian manifold $\left( M,g\right) $
together with a weighted volume form $e^{-f}dv,$ where $f$ is a smooth
function on $M$ and $dv$ the volume element induced by the Riemannian metric 
$g.$ The associated weighted Laplacian $\Delta _f$ is given by

\begin{equation*}
\Delta _{f}u:=\Delta u-\left\langle \nabla f,\nabla u\right\rangle ,
\end{equation*}%
which is a self-adjoint operator on the space of square integrable functions
on $M$ with respect to the measure $e^{-f}dv.$ A function $u$ is called $f-$%
harmonic if $\Delta _{f}u=0.$ It is easy to see that $f-$harmonic functions
are characterized as the critical points of the weighted Dirichlet energy $%
\int_{M}|\nabla u|^{2}e^{-f}dv.$

The Bakry-\'{E}mery curvature $Ric_f$ associated to smooth metric measure
space $\left(M,g,e^{-f}dv\right)$ is defined \cite{BE} by

\begin{equation*}
Ric_{f}:=Ric+Hess\left( f\right) ,
\end{equation*}%
where $Ric$ denotes the Ricci curvature of $(M,g)$ and $Hess\left( f\right) $
the Hessian of $f.$

The weighted Laplacian and the Bakry-\'{E}mery curvature are natural objects
in the geometric analysis. The most significant and interesting case of our
concern here is the so-called gradient Ricci solitons. Recall a complete
manifold $(M,g)$ is a gradient Ricci soliton if the equation $%
Ric_{f}=\lambda g$ holds for some function $f$ and scalar $\lambda.$ The
soliton is called expanding, steady and shrinking, accordingly, if $%
\lambda<0,$ $\lambda=0$ and $\lambda>0.$ It is customary to normalize the
constant $\lambda \in \left\{ -1/2,0,1/2\right\} $ by scaling the metric $g.$
As suggested by the name, the gradient Ricci solitons arise from the study
of Ricci flows, particularly from the blow up analysis of the singularities
of the Ricci flows \cite{H}. It is thus a central issue in the study of
Ricci flows to understand and classify gradient Ricci solitons. Note that
the Ricci soliton equation $Ric_{f}=\lambda g$ reduces to the Einstein
equation $Ric=\lambda g$ when $f$ is a constant function. So the soliton
equation is also of own interest as a geometric partial differential
equation. We refer the readers to the book \cite{CLN} for more information
on gradient Ricci solitons.

The Bakry-\'{E}mery curvature is closely related to the weighted Laplacian
as indicated by the following Bochner type identity.

\begin{equation*}
\Delta_f |\nabla u|^2=2|Hess(u)|^2+2\left\langle \nabla u,\nabla \Delta_f
u\right\rangle+2Ric_{f} (\nabla u,\ \nabla u).
\end{equation*}%
This is of course very much in parallel to how the Ricci curvature is
related to the Laplacian on a complete manifold. Taking this point of view,
in the first part of the paper, we will develop some analogous results to
the Laplacian for the weighted Laplacian under the assumption that the Bakry-%
\'{E}mery curvature is bounded from below. However, we would like to point
out that unlike the classical case on the analysis of the Laplacian, the
function $f$ enters into play in the behavior of the weighted Laplacian $%
\Delta_f.$ The assumptions on $f$ dictate both the conclusions and the level
of technical difficulties involved in the arguments. As in \cite{MW}, we
continue to assume that $f$ grows at most linearly, that is,

\begin{equation*}
\left\vert f\right\vert \left( x\right) \leq \alpha r\left( x\right) +\beta
\end{equation*}%
for some constants $\alpha \ge 0$ and $\beta \ge 0,$ where $r\left( x\right)
:=d\left(p,x\right) $ is the geodesic distance to a fixed point $p$ in $M.$
The linear growth rate $a$ of $f$ is then defined to be the infimum over all
such $\alpha.$

Also, as indicated, we assume that $Ric_{f}$ is bounded below by a negative
constant. After a suitable scaling of the metric, we may in fact assume $%
Ric_{f}\geq -\left( n-1\right) ,$ where $n$ is the dimension of $M.$

Our first result is a gradient estimate for positive $f-$harmonic functions
on $\left( M,g,e^{-f}dv\right).$

\begin{theorem}
\label{Gradient_Est} Let $\left( M,g,e^{-f}dv\right) $ be a smooth metric
measure space of dimension $n$ with $Ric_{f}\geq -\left( n-1\right) .$
Assume that there exists constant $a>0$ such that the oscillation of $f$
over the unit ball $B_x(1)$ for any $x\in M$ satisfies 
\begin{equation*}
\sup_{y\in B_{x}\left( 1\right) }\left\vert f\left( y\right) -f\left(
x\right) \right\vert \leq a.
\end{equation*}%
Then there exists a constant $C\left( n,a\right) $ depending only on $n$ and 
$a$ such that for any $u>0$ with $\Delta _{f}u=0$ we have%
\begin{equation*}
\left\vert \nabla \log u\right\vert \leq C\left( n,a\right) .
\end{equation*}
\end{theorem}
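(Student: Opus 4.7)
The plan is to adapt the Cheng--Yau gradient estimate to the weighted setting. Set $v = \log u$; since $\Delta_f u = 0$ and $u > 0$, one verifies $\Delta_f v = -|\nabla v|^2$. Writing $w = |\nabla v|^2$, the weighted Bochner identity together with $Ric_f \ge -(n-1)$ gives
\[
\Delta_f w \;\ge\; 2|\mathrm{Hess}(v)|^2 \;-\; 2\langle \nabla v, \nabla w\rangle \;-\; 2(n-1)\,w.
\]
Using $|\mathrm{Hess}(v)|^2 \ge (\Delta v)^2/n$ with $\Delta v = \Delta_f v + \langle \nabla f, \nabla v\rangle = -w + \langle \nabla f, \nabla v\rangle$ yields the standard Bernstein-type differential inequality, but with an extra drift term $\langle \nabla f, \nabla v\rangle$ that is absent in the unweighted case.

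I would set up the classical cutoff argument next. Choose $\phi = \eta(r)$ supported in $B_p(2)$ and equal to $1$ on $B_p(1)$, where $r$ denotes the distance from $p$. A bound $\Delta_f \phi \ge -C(n,a)$ is obtained via Wei--Wylie's weighted Laplacian comparison for $r$: covering $B_p(2)$ by a bounded number of unit balls, the oscillation hypothesis gives $\mathrm{osc}_{B_p(2)} f \le C a$, and together with $Ric_f \ge -(n-1)$ this yields $\Delta_f r \le C(n,a)$ outside the cut locus (which is absorbed through Calabi's trick). One then analyses $G = \phi^{2} w$ at its maximum $x_0 \in B_p(2)$; the algebraic inequality produced by $\nabla G(x_0) = 0$ and $\Delta_f G(x_0) \le 0$ would close immediately if the cross term $\langle \nabla f, \nabla v\rangle$ were pointwise controllable. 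It is not, and this is the main obstacle.

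The way around it is to replace the pointwise argument by an integral one. Multiplying the Bochner inequality by $\phi^{2} w^{q}$ and integrating against $e^{-f}\,dv$, one exploits the identity
\[
\int \phi^{2} w^{q}\langle \nabla v, \nabla w\rangle\, e^{-f}\,dv \;=\; \tfrac{1}{q+1}\!\int \phi^{2} w^{q+2}\, e^{-f}\,dv \;-\; \tfrac{2}{q+1}\!\int \phi\, w^{q+1}\langle \nabla\phi, \nabla v\rangle\, e^{-f}\,dv,
\]
obtained by integrating by parts against $e^{-f}\,dv$ and using $\Delta_f v = -w$. Crucially, this identity never exposes $\nabla f$ pointwise. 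Coupled with Young's inequality to absorb the mixed terms, it enables a weighted Moser iteration on $B_p(2)$, for which the required Sobolev and Poincar\'e inequalities follow from the weighted Bishop--Gromov volume doubling $V_f(B_p(2)) \le C(n,a) V_f(B_p(1))$ (valid under $Ric_f \ge -(n-1)$ with bounded oscillation of $f$, again by Wei--Wylie). The iteration delivers a mean-value estimate
\[
w(p) \;\le\; \frac{C(n,a)}{V_f(B_p(1))}\!\int_{B_p(1)} w\, e^{-f}\,dv .
\]
Finally, a Caccioppoli bound obtained by testing $\Delta_f u = 0$ against $u^{-1}\phi^{2}$ gives $\int \phi^{2} w\, e^{-f}\,dv \le 4\int |\nabla \phi|^{2} e^{-f}\,dv$, whence $\int_{B_p(1)} w\, e^{-f}\,dv \le C\, V_f(B_p(2))$; combined with volume doubling this yields $w(p) \le C(n,a)$ as required. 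The hardest technical step to verify is the effective weighted Sobolev inequality under only the oscillation hypothesis on $f$, which rests squarely on Wei--Wylie's comparison geometry in the Bakry--\'Emery framework.
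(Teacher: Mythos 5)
Your overall scaffolding matches the paper's: you correctly identify that the oscillation hypothesis feeds into Wei--Wylie's weighted Laplacian comparison, which in turn gives local volume doubling and Neumann Poincar\'e/Sobolev inequalities (the paper's Lemmas~\ref{NP} and~\ref{NS}), and you correctly observe that a Caccioppoli estimate supplies the $L^1$ input. However, the central analytic step --- Moser iteration applied to $w=|\nabla \log u|^2$ --- has a genuine gap, and the paper proceeds differently precisely to avoid it.

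The problem is the superlinear term your integration-by-parts identity produces. Multiplying the Bochner inequality
\begin{equation*}
\Delta_f w \ge 2|\mathrm{Hess}(v)|^2 - 2\langle\nabla v,\nabla w\rangle - 2(n-1)w
\end{equation*}
by $\phi^2 w^q$, integrating against $e^{-f}dv$, and substituting your identity gives a term $\frac{2}{q+1}\int\phi^2 w^{q+2}e^{-f}$ on the \emph{right-hand side}. The only term on the left capable of controlling a power $w^{q+2}$ is the Hessian term, but $|\mathrm{Hess}(v)|^2 \ge (\Delta v)^2/n$ reintroduces $\langle\nabla f,\nabla v\rangle$ pointwise via $\Delta v = -w + \langle\nabla f,\nabla v\rangle$ --- exactly what you wished to avoid --- and even in the unweighted case ($f$ constant) it contributes only $\frac{2}{n}\int\phi^2 w^{q+2}e^{-f}$, which is \emph{smaller} than $\frac{2}{q+1}\int\phi^2 w^{q+2}e^{-f}$ for all $q<n-1$. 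So the inequality fails to close at exactly the low exponents where the iteration must start, given that the Caccioppoli bound only furnishes $w\in L^1_{\mathrm{loc}}$. There is no bridge from $L^1$ to $L^{n}$, and the iteration cannot be launched.

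The paper circumvents this by never working with $|\nabla\log u|^2$. Instead it sets $h:=|\nabla u|^2$, for which the weighted Bochner formula plus $Ric_f\ge -(n-1)$ gives the \emph{linear} inequality $\Delta_f h\ge -2(n-1)h$ with no drift term whatsoever. Moser iteration (using Lemmas~\ref{NP} and~\ref{NS}) then yields the mean-value estimate $\sup_{B_x(1/16)} h \le \frac{C}{V_f(B_x(1/8))}\int_{B_x(1/8)} h\, e^{-f}$. A Caccioppoli estimate --- testing $\Delta_f u=0$ against $u\phi^2$, not $u^{-1}\phi^2$ --- bounds $\int h\phi^2 e^{-f}$ by $4\int u^2|\nabla\phi|^2 e^{-f}$, which together with volume doubling gives $|\nabla u|(x)\le C\sup_{B_x(1/4)}u$. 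The nonlinearity is absorbed at the very end by the Harnack inequality for $u$ itself (again from Moser iteration on $\Delta_f u = 0$), which gives $\sup_{B_x(1/4)}u\le C\,u(x)$ and hence $|\nabla \log u|(x)\le C(n,a)$. If you want to salvage your route, you should replace the iteration on $w$ by the two linear Moser iterations (one on $|\nabla u|^2$, one on $u$) and combine them at the end, as the paper does.
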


We remark that the assumption on $f$ in Theorem \ref{Gradient_Est} is
satisfied for example if $\left\vert \nabla f\right\vert \leq a$ or if $f$
is bounded on $M.$ In the case that $\left\vert \nabla f\right\vert \leq a,$
the result can be proved more or less following the classical argument of
Yau \cite{Y} via the aforementioned Bochner identity. The details have been
carried out in \cite{Wu}. If $Ric_{f}\geq 0,$ then the result has been
proved in \cite{MW}. In fact, the following stronger conclusion holds. 
\begin{equation*}
\left\vert \nabla \log u\right\vert \leq C(n)\,a.
\end{equation*}

Our second result concerns the bottom spectrum of the weighted Laplacian $%
\Delta_f.$ Let $\lambda _{1}\left( \Delta _{f}\right) :=\inf Spec\left(
-\Delta _{f}\right) .$ Then the variational characterization for $\lambda
_{1}\left( \Delta _{f}\right) $ implies that 
\begin{equation*}
\lambda _{1}\left( \Delta _{f}\right) =\inf_{\phi \in C_{0}^{\infty }\left(
M\right) }\frac{\int_{M}\left\vert \nabla \phi \right\vert ^{2}e^{-f}\,dv}{%
\int_{M}\phi ^{2}e^{-f}\,dv}.
\end{equation*}

In the case $Ric_{f}\geq 0,$ it was shown in \cite{MW} that $\lambda
_{1}\left( \Delta _{f}\right) $ has an optimal upper bound of the form $%
\lambda _{1}\left( \Delta _{f}\right) \leq \frac{1}{4}a^{2},$ where $a$ is
the linear growth rate of $f.$ Here is a more general result.

\begin{theorem}
\label{Estimate} Let $\left( M^{n},g,e^{-f}dv\right) $ be a complete smooth
metric measure space with $Ric_{f}\geq -\left( n-1\right).$ Then we have 
\begin{equation*}
\lambda _{1}\left( \Delta _{f}\right) \leq \frac{1}{4}\left( n-1+a\right)
^{2},
\end{equation*}%
where $a\geq 0$ is the linear growth rate of $f.$ In particular, if $f$ is
of sublinear growth, then the bottom spectrum of $\Delta_f$ satisfies the
following sharp upper bound%
\begin{equation*}
\lambda _{1}\left( \Delta _{f}\right) \leq \frac{\left( n-1\right) ^{2}}{4}.
\end{equation*}
\end{theorem}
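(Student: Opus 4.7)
The plan is to apply the variational characterization of $\lambda_1(\Delta_f)$ to radial Lipschitz test functions of the form $\phi_R(x) = e^{-\alpha r(x)}\,\eta_R(r(x))$, where $r(x) = d(p,x)$, $\eta_R$ is a standard cutoff equal to $1$ on $[0,R]$ and vanishing outside $[0,R+1]$, and $\alpha > 0$ will be chosen close to $(n-1+a)/2$. Converting both integrals to polar coordinates on the complement of the cut locus against the weighted area $A_f(r) := \int_{\partial B_p(r)} e^{-f}\, dA$, the Rayleigh quotient becomes
\[
\frac{\int_M |\nabla \phi_R|^2\, e^{-f}\,dv}{\int_M \phi_R^2\, e^{-f}\,dv}
= \frac{\int_0^\infty \bigl(\tfrac{d}{dr}(e^{-\alpha r}\eta_R)\bigr)^2 A_f(r)\,dr}{\int_0^\infty (e^{-\alpha r}\eta_R)^2\, A_f(r)\,dr}.
\]
The leading contribution of the numerator is $\alpha^2$ times the denominator, and the cutoff correction supported on $[R, R+1]$ is negligible in the limit $R \to \infty$ provided $\int_0^\infty e^{-2\alpha r} A_f(r)\,dr$ is finite. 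Taking $\alpha \downarrow (n-1+a)/2$ then yields the stated bound.

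The remaining ingredient is a weighted volume estimate of the form
\[
V_f(B_p(R)) \le C_\epsilon\, e^{(n-1+a+\epsilon) R} \qquad \text{for every } \epsilon > 0.
\]
The starting point is the Bochner formula, which under $Ric_f \ge -(n-1)$ produces the Riccati inequality $(\Delta_f r)'(r) + (\Delta r)^2/(n-1) \le n-1$ along each minimal geodesic from $p$. Integrating this inequality and using the identity $\int_0^R f_r(s,\theta)\,ds = f(R,\theta) - f(0)$, together with the bound $|f(R,\theta)| \le (a+\epsilon)R + \beta$ coming from the definition of the linear growth rate $a$, one can control $\log A_f(R)$ by $(n-1+a+\epsilon) R + O(1)$ and hence bound $V_f$. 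Once the volume estimate is established, the Brooks-type argument of the first paragraph delivers the upper bound $(n-1+a)^2/4$; the second, sublinear, case is the specialization $a = 0$ giving $(n-1)^2/4$.

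The hardest step is obtaining the exponent $n-1+a$ (and not a worse constant) in the weighted volume estimate under the weak hypothesis of linear growth of $|f|$. Under the stronger assumption $|\nabla f| \le a$, a Wei--Wylie-type weighted Laplacian comparison gives $\Delta_f r \le (n-1)\coth r + a \to n-1+a$ pointwise, after which the volume bound is immediate; under only linear growth of $|f|$ there is no pointwise bound on $f_r$, only the averaged bound encoded by $\int_0^R f_r\,ds = f(R) - f(0)$. Converting this averaged control of the drift into the correct integrated control of $m_f = \Delta_f r$ against the weighted area measure, while still producing exactly the exponent $n-1+a$, is the central technical challenge and is the point at which the linear growth hypothesis on $f$ is genuinely used.
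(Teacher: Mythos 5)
Your high-level plan is exactly the paper's: prove a weighted volume upper bound $V_f(B_p(R)) \le C\,e^{(n-1+a+\epsilon)R}$ and then plug the test function $e^{-\alpha r}\eta_R$ into the Rayleigh quotient, letting $\alpha\downarrow(n-1+a)/2$. You also correctly write down the Riccati inequality $(\Delta_f r)' + (\Delta r)^2/(n-1)\le n-1$ along minimal geodesics (note the quadratic term is in $\Delta r$, not $\Delta_f r$, which is precisely where the trouble comes from) and you correctly identify the only available control on the drift, namely $\int_0^R f'\,ds = f(R)-f(0)$ and linear growth of $|f|$.

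However, you have not actually carried out the one step that you yourself single out as ``the central technical challenge,'' and this is a genuine gap rather than a routine verification. Integrating the Riccati inequality and setting $u:=J_f'/J_f=\Delta_f r$ gives
\[
u(r)+\frac{1}{n-1}\int_1^r \left(u(t)+f'(t)\right)^2\,dt \le (n-1)r + C_0,
\]
and the issue is to extract from this that $\int_1^r u(t)\,dt \le (n-1+\alpha)r + O(1)$, i.e.\ that $\log J_f(r)$ grows at rate at most $n-1+\alpha$. A naive estimate of the quadratic term does not give this directly because $u+f'=\Delta r$ is not the quantity you want to bound and $f'$ has no pointwise control. The paper (Lemma \ref{Vol_Ric_Neg}) closes the gap as follows: apply Cauchy--Schwarz to get
\[
\int_1^r (u+f')^2\,dt \ge \frac{1}{r-1}\left(f(r)-f(1)+\int_1^r u\,dt\right)^2,
\]
so that
\[
u(r)+\frac{1}{(n-1)r}\left(f(r)-f(1)+\int_1^r u\,dt\right)^2 \le (n-1)r+C_0,
\]
and then prove $\int_1^r u\,dt \le (n-1+\alpha)r + \alpha + 2\beta + C_0$ by a first-crossing argument: set $v(r):=(n-1+\alpha)r+\alpha+2\beta+C_0-\int_1^r u\,dt$, suppose $R>1$ is the first zero of $v$, feed $\int_1^R u = (n-1+\alpha)R+\alpha+2\beta+C_0$ and $|f|\le\alpha r+\beta$ back into the displayed inequality to force $u(R)\le -C_0<0$, and observe that this makes $v'(R)=(n-1+\alpha)-u(R)>0$, contradicting the choice of $R$. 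Without this (or an equivalent) argument, your sentence ``one can control $\log A_f(R)$ by $(n-1+a+\epsilon)R + O(1)$'' is an assertion, not a proof, and it is precisely the point on which the sharpness of the exponent $n-1+a$ hinges.
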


This result is consistent with Cheng's well-known estimate \cite{C} in the
case $f$ is constant, which says that $\lambda _{1}\left( M\right) $ is
bounded above by $\frac{\left( n-1\right) ^{2}}{4}.$ This incidentally
indicates our estimate is sharp. We remark that under the stronger
assumption that $\left\vert \nabla f\right\vert \leq a,$ the result has also
been established in \cite{SZ}.

Motivated by the work of P. Li and the second author in \cite{LW0, LW, LW1,
LW2} and our generalization in \cite{MW} to the case of weighted Laplacian
with $Ric_{f}\geq 0,$ we study the structure of manifolds on which $\lambda
_{1}\left( \Delta _{f}\right) $ achieves its maximal value in the preceding
estimate. Here, we need to impose a stronger assumption on $f$ that its
gradient is bounded.

\begin{theorem}
\label{Rigidity1} Let $\left( M,g,e^{-f}dv\right) $ be a complete smooth
metric measure space of dimension $n\geq 3$ with $Ric_{f}\geq -\left(
n-1\right) .$ Assume that $\left\vert \nabla f\right\vert \leq a$ on $M$ for
some constant $a\geq 0.$ If $\lambda _{1}\left( \Delta _{f}\right) =\frac{1}{%
4}\left( n-1+a\right) ^{2},$ then either $M$ is connected at infinity or $f$
is constant and $M$ splits as a warped product $M=\mathbb{R}\times N$ with $%
ds_{M}^{2}=dt^{2}+h^{2}\left( t\right) ds_{N}^{2},$ where $N$ is compact and
the function $h\left( t\right) =e^{t}$ if $n\geq 4$ and $h\left( t\right)
=e^{t}$ or $h\left( t\right) =\cosh t$ if $n=3.$
\end{theorem}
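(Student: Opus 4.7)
The plan is to follow the Li--Wang rigidity strategy for $\lambda_{1}$ under a negative curvature lower bound, adapted to the weighted setting and building on the framework developed in MW. Assume that $M$ has at least two ends; I will extract both the warped product splitting and the fact that $f$ is constant, which establishes the second alternative. If $M$ has a single end, the first alternative holds automatically. Since $\lambda_{1}(\Delta_{f})>0$, every end of $M$ is nonparabolic for $\Delta_{f}$, so the weighted Li--Tam construction produces a bounded nonconstant $f$-harmonic function $u$ with $0<u<1$ satisfying $u\to 1$ along one distinguished end and $u\to 0$ along the union of the complementary ends. Testing the variational characterization of $\lambda_{1}$ against cutoffs of $u^{\beta}$ with $\beta=\tfrac{1}{2}(n-1+a)$, and invoking the hypothesis $\lambda_{1}(\Delta_{f})=\tfrac{1}{4}(n-1+a)^{2}$, yields a sharp integral decay estimate for $u$ that will be used to pinch a Bochner-type inequality.

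The next step is to apply the weighted Bochner identity to the $f$-harmonic function $u$, giving
\begin{equation*}
\Delta_{f}|\nabla u|^{2}=2|Hess(u)|^{2}+2\,Ric_{f}(\nabla u,\nabla u)\geq 2|Hess(u)|^{2}-2(n-1)|\nabla u|^{2}.
\end{equation*}
Combined with a refined Kato inequality for $f$-harmonic functions and with the sharp decay estimate, integration by parts against an appropriate power of $u$ forces equality throughout once the hypothesis on $\lambda_{1}$ is invoked. I expect the equality conditions to yield $Ric_{f}(\nabla u,\nabla u)=-(n-1)|\nabla u|^{2}$ pointwise, saturation of the refined Kato inequality forcing $Hess(u)$ to admit only two eigenvalues (one along $\nabla u$ and an $(n-1)$-fold umbilical one transversely), and a rigid relation linking $\langle\nabla f,\nabla u\rangle$ to $|\nabla u|$. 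Together with the bound $|\nabla f|\leq a$, this last relation forces $\nabla f$ to be aligned with $\nabla u$.

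With these equality conditions in hand, choose $t$ to be a normalized antiderivative of $|\nabla u|$ so that $|\nabla t|=1$; then $u$, $|\nabla u|$ and $f$ descend to functions of $t$, and the level sets of $t$ are totally umbilic with a common mean curvature. This produces a warped product structure $M=\mathbb{R}\times_{h}N$ with $N$ compact, compactness following from $\lambda_{1}>0$ together with the end structure. An ODE analysis combining $\Delta_{f}u=0$ with the saturated Bochner equality gives $h''/h=1$, whence $h=c_{1}e^{t}+c_{2}e^{-t}$; accounting for the two-end topology then restricts this to either $h=e^{t}$ or $h=\cosh t$. In dimension $n\geq 4$, the Gauss equation combined with the saturated $Ric_{f}$ equality in tangential directions rules out $h=\cosh t$, while in dimension $n=3$ the cross section is a surface and this obstruction disappears, so both warpings persist. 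Finally, the rigid ODE for $f'(t)$ coming from the saturated equality, together with $|f'|\leq a$, forces $f'\equiv 0$, so $f$ is constant; consistently, the linear growth rate is zero and $\lambda_{1}=(n-1)^{2}/4$ matches Cheng's sharp bound.

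The main obstacle will be the sharp integration by parts that pinches the Bochner inequality: the $\nabla f$ contributions must be tracked precisely in order to extract the rigid relation between $\nabla f$ and $\nabla u$, rather than only the coarser equality in $Ric_{f}$ alone, and isolating these terms against the refined Kato identity requires delicate bookkeeping of boundary contributions controlled by the sharp decay of $u$. A secondary subtlety is the dichotomy between $n=3$ and $n\geq 4$, where one must convert the saturated $Ric_{f}$ identity into intrinsic Ricci curvature information on the cross section $N$ via the Gauss equation and the umbilical structure; the extra freedom available in the surface case is precisely what allows the $\cosh t$ branch to survive.
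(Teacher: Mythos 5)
There is a genuine structural gap at the very first step: you assert that $\lambda_{1}(\Delta_{f})>0$ forces \emph{every} end of $M$ to be $f$-nonparabolic. This is false. Positive bottom spectrum implies that $M$ as a whole is $f$-nonparabolic (equivalently, at least one end is), but individual ends can still be $f$-parabolic; in the Li--Wang theory the assumption $\lambda_{1}>0$ precisely leads to a dichotomy between (i) two or more $f$-nonparabolic ends and (ii) exactly one $f$-nonparabolic end with the rest $f$-parabolic. Your proposal handles only (i), where the Li--Tam construction yields a bounded $f$-harmonic function with finite weighted Dirichlet energy. Case (ii) cannot be reached by such a function; the paper's argument instead uses a Busemann function $\beta$ associated to a ray in the parabolic end, derives $\Delta_{f}\beta\geq-(n-1+a)$ from the weighted Laplacian comparison, and then pinches the exponential $e^{\frac{1}{2}(n-1+a)\beta}$ against the weighted volume decay of the parabolic end and growth of the nonparabolic end. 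The $e^{t}$ warping in the conclusion is produced \emph{only} in case (ii), so your proposal as written cannot yield that branch at all.

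A second, related gap is the mechanism behind the $n=3$ versus $n\geq 4$ dichotomy. You attribute it to a Gauss-equation obstruction on the cross-section $N$. In the paper's argument the dichotomy appears already in case (i): the refined Bochner/Kato computation gives $\Delta_{f}|\nabla u|^{\alpha}\geq -\bigl(\sqrt{n-2}+\tfrac{a}{n-1}\bigr)^{2}|\nabla u|^{\alpha}$, hence $\lambda_{1}(\Delta_{f})\leq\bigl(\sqrt{n-2}+\tfrac{a}{n-1}\bigr)^{2}$, which is strictly smaller than $\tfrac{1}{4}(n-1+a)^{2}$ when $n\geq 4$; thus the hypothesis on $\lambda_{1}$ outright excludes two $f$-nonparabolic ends in dimension $\geq 4$. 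Only in $n=3$ does the bound coincide with $\tfrac{1}{4}(2+a)^{2}$, allowing the $\cosh t$ warped product to survive, and even then the equality analysis (including $a|\nabla u||u_{11}|=0$ together with the infinitude of $\int_M e^{-f}$) forces $a=0$ and reduces to the unweighted Li--Wang result. Your route to ``$f$ constant'' via an ODE for $f'(t)$ plausibly works in spirit, but as sketched it is attached to the wrong case and does not isolate where $|\nabla f|\leq a$ is actually used: in case (i) it enters through $\langle\nabla f,\nabla u\rangle=0$ plus $a|\nabla u||u_{11}|=0$, and in case (ii) through $\langle\nabla f,\nabla\beta\rangle=a$ followed by the observation that $Ric^{N}\geq a e^{-2t}$ for all $t$ is impossible on compact $N$ unless $a=0$.
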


Let us point out that in the case $M$ is the warped product, its bottom
spectrum has maximal value $\frac{(n-1)^2}{4}.$ Let us also point out that
this result has been independently proved by Su and Zhang in \cite{SZ}.

In the second part of this paper, we consider some applications of our study
of the weighted Laplacian to gradient Ricci solitons. We first address the
issue whether a non-trivial expanding gradient Ricci soliton must be
connected at infinity. Recall that an expanding gradient Ricci soliton is a
manifold $\left( M,g\right) $ such that $Ric_{f}=-\frac{1}{2}g$ for some
function $f.$ It is known \cite{PRS} that the scalar curvature $S\geq -\frac{%
n}{2}$ on an expanding gradient Ricci soliton.

We have the following result concerning this issue.

\begin{theorem}
\label{Rigidity_Exp}Let $\left( M,g,f\right) $ be an expanding gradient
Ricci soliton. Assume that $S\geq -\frac{n-1}{2}$ on $M$. Then either $M$ is
connected at infinity or $M$ is isometric to the product $\mathbb{R}\times
N, $ where $N$ is a compact Einstein manifold and $\mathbb{R}$ the Gaussian
expanding Ricci soliton.
\end{theorem}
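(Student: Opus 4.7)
The plan is to argue by contradiction: assume $M$ has at least two ends and deduce that $M$ must split isometrically as $\mathbb{R}\times N$ with $N$ compact Einstein and $\mathbb{R}$ the Gaussian expander. The first step is a spectral lower bound. Tracing the soliton equation $Ric+Hess(f)=-\frac{1}{2}g$ yields $S+\Delta f=-\frac{n}{2}$, so the hypothesis $S\geq -\frac{n-1}{2}$ is equivalent to $\Delta f\leq -\frac{1}{2}$. For $\eta\in C_{0}^{\infty}(M)$, I would integrate this against $\eta^{2}e^{-f}dv$ and rewrite using weighted integration by parts as
\[
\int_{M}\Delta f\,\eta^{2}e^{-f}dv=-2\int_{M}\eta\,\langle\nabla f,\nabla\eta\rangle e^{-f}dv+\int_{M}|\nabla f|^{2}\eta^{2}e^{-f}dv;
\]
absorbing the cross term by Cauchy--Schwarz then yields
\[
\int_{M}|\nabla\eta|^{2}e^{-f}dv\geq\frac{1}{2}\int_{M}\eta^{2}e^{-f}dv,
\]
i.e.\ $\lambda_{1}(\Delta_{f})\geq\frac{1}{2}$ on any expander satisfying the scalar curvature bound.

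The second step produces a suitable $f$-harmonic function and applies Bochner. Since $M$ has at least two ends, a weighted Li--Tam type construction adapted to $\Delta_{f}$ yields a non-constant $u\in(0,1)$ with $\Delta_{f}u=0$ and $\int_{M}|\nabla u|^{2}e^{-f}dv<\infty$, the growth control needed on $f$ coming from the soliton identity $|\nabla f|^{2}+S+f=\text{const}$ together with the scalar curvature bound. The weighted Bochner identity with $Ric_{f}=-\frac{1}{2}g$ then reads
\[
\frac{1}{2}\Delta_{f}|\nabla u|^{2}=|Hess(u)|^{2}-\frac{1}{2}|\nabla u|^{2},
\]
and integrating against $e^{-f}dv$ with a cutoff whose boundary terms vanish by the finite weighted energy of $u$ gives
\[
\int_{M}|Hess(u)|^{2}e^{-f}dv=\frac{1}{2}\int_{M}|\nabla u|^{2}e^{-f}dv.
\]

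Combining this identity with the Kato inequality $|Hess(u)|^{2}\geq\bigl|\nabla|\nabla u|\bigr|^{2}$ and the spectral bound $\lambda_{1}(\Delta_{f})\geq\frac{1}{2}$ applied to $|\nabla u|$ (justified by $H^{1}$ approximation) forces the chain
\[
\frac{1}{2}\int_{M}|\nabla u|^{2}e^{-f}dv\leq\int_{M}\bigl|\nabla|\nabla u|\bigr|^{2}e^{-f}dv\leq\int_{M}|Hess(u)|^{2}e^{-f}dv=\frac{1}{2}\int_{M}|\nabla u|^{2}e^{-f}dv
\]
to collapse to equalities throughout. Pointwise Kato equality then forces $Hess(u)$ to be rank one along $\nu=\nabla u/|\nabla u|$, from which $\nabla_{X}\nu=0$ for every $X$, so $\nu$ is a parallel unit vector field and $M$ splits isometrically as $\mathbb{R}\times N$ with $g=dt^{2}+g_{N}$. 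Equality in the spectral estimate also forces $\Delta f\equiv -\frac{1}{2}$, hence $S\equiv -\frac{n-1}{2}$. Inserting the splitting into $Ric+Hess(f)=-\frac{1}{2}g$ decouples the equation: the $(t,t)$ component gives $f_{tt}=-\frac{1}{2}$ so $f(t,y)=-\frac{t^{2}}{4}+a\,t+b(y)$, the mixed component forces $a$ constant, and the $(N,N)$ component reduces to $Ric_{N}+Hess_{N}(b)=-\frac{1}{2}g_{N}$; tracing with $S_{N}=-\frac{n-1}{2}$ yields $\Delta_{N}b=0$. Since $M$ having at least two ends forces $N$ to be compact (otherwise $\mathbb{R}\times N$ would be connected at infinity), $b$ is constant by the maximum principle and $N$ is Einstein with $Ric_{N}=-\frac{1}{2}g_{N}$; after translating $t$, the $\mathbb{R}$ factor carries the Gaussian expander potential $f(t)=-\frac{t^{2}}{4}$.

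The principal obstacle will be the weighted Li--Tam construction in step two: on an expander the weight $e^{-f}$ can grow as fast as $e^{r^{2}/4}$, so a Dirichlet problem exhaustion combined with Caccioppoli type estimates must be controlled using the growth bound $|\nabla f|^{2}\leq C-f+\frac{n-1}{2}$ from the soliton identity, in order to guarantee both non-triviality and the finite weighted energy of the resulting $f$-harmonic function needed for the vanishing of cutoff boundary terms.
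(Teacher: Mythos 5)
Your overall strategy parallels the paper's: a spectral lower bound $\lambda_1(\Delta_f)\geq\frac12$, a Li--Tam $f$-harmonic function, Bochner/Kato, and a splitting. The direct proof of $\lambda_1(\Delta_f)\geq\frac12$ by integrating $\Delta f\leq-\frac12$ against $\eta^2e^{-f}$ is sound and is a legitimate substitute for the paper's weighted Poincar\'e inequality via $\Delta_f e^f=-(\frac n2+S)e^f$. The Kato-equality route to a parallel unit vector field (Hessian rank one, $\nabla\nu=0$) is also workable, and it differs from the paper, which first pins down $S\equiv-\frac{n-1}{2}$ via the Poincar\'e equality case and then uses $v:=2\lvert\nabla f\rvert$ and $v_{ij}=0$ to split.

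However, there is a genuine gap precisely at the point you flag as a routine technicality. To produce a \emph{bounded} nonconstant $f$-harmonic function with \emph{finite} weighted Dirichlet energy by Li--Tam, one needs both ends to be $f$-nonparabolic. The bound $\lambda_1(\Delta_f)\geq\frac12$ guarantees that $M$ itself is $f$-nonparabolic, i.e.\ at least one end is, but says nothing about the others; an $f$-parabolic second end defeats the construction. Ruling out $f$-parabolic ends is the real technical core of the paper's proof and has nothing to do with the growth rate of $e^{-f}$ that your ``principal obstacle'' paragraph worries about. The paper introduces $u:=n-2f$, uses the identities $\Delta_fu=u$, $|\nabla u|^2\leq 2u$, and $u\geq1$, observes that on a hypothetical $f$-parabolic end $u$ (hence $f$ and $|\nabla f|$) must be bounded --- otherwise $e^{-u/2}$ is a positive $f$-superharmonic function attaining its infimum at infinity, forcing nonparabolicity --- and then derives a contradiction by comparing the consequent lower bound $V_f(B_x(1))\geq C_1e^{-C_2r(x)}$ with the arbitrarily fast weighted volume decay $\int_{E\setminus E(r)}e^{-f}\leq C(k)e^{-\sqrt{4k+2}\,r}$ coming from $\Delta_fu^k\geq ku^k$ and the Li--Wang decay estimate. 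Without this exclusion of $f$-parabolic ends, your argument does not get off the ground, and the soliton growth identity $|\nabla f|^2+S+f=\mathrm{const}$ that you invoke does not by itself supply it.

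A smaller point: the equality case of your direct spectral inequality gives not only $\Delta f\equiv-\frac12$ but also $\nabla|\nabla u|=|\nabla u|\,\nabla f$, which you should use explicitly to conclude $\nabla f\parallel\nu$ and hence that $b$ in your ansatz $f=-\frac{t^2}{4}+at+b(y)$ is constant; invoking compactness of $N$ and the maximum principle alone is fine but a touch circuitous when this stronger pointwise information is already available from the equality case.
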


Note that in the second case of $M$ being a cylinder, its scalar curvature $%
S=-\frac{n-1}{2}.$ This somewhat explains why we impose such an assumption
on $S.$ However, at this point it is unclear to us whether the assumption is
in fact superfluous for nontrivial expanding gradient Ricci solitons, though
obviously there are Einstein manifolds with infinitely many ends.

As for the proof of Theorem \ref{Rigidity_Exp}, it does not follow directly
from our preceding rigidity theorem. This is because for expanding gradient
Ricci solitons, the potential function $f$ is never of linear growth unless
it is trivial according to the following result.

\begin{theorem}
\label{Growth}Let $\left( M,g,f\right) $ be a nontrivial expanding gradient
Ricci soliton. Then for all $r>2,$ 
\begin{equation*}
\frac{1}{4}\,r^{2}-Cr^{\frac{3}{2}}\,\sqrt{\ln r}\leq \sup_{\partial
B_{p}\left( r\right) }(-f)\left( x\right) \leq \frac{1}{4}r^{2}+Cr
\end{equation*}%
for some constant $C.$
\end{theorem}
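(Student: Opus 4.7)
Tracing the soliton equation $Ric + Hess(f) = -\tfrac{1}{2}g$ gives $S + \Delta f = -\tfrac{n}{2}$, and combining the soliton equation with the contracted second Bianchi identity $\nabla S = 2\,Ric(\nabla f,\cdot)$ yields the Hamilton-type conservation law
\[
|\nabla f|^{2} + S + f = \text{const}.
\]
After adjusting $f$ by this constant we assume it is zero. Since $S \geq -\tfrac{n}{2}$ on every expander (recorded just before the theorem), we obtain $|\nabla f|^{2} \leq -f + \tfrac{n}{2}$, so the function $\rho := 2\sqrt{-f + \tfrac{n}{2}}$ is well-defined on $M$ and satisfies $|\nabla\rho|^{2} = |\nabla f|^{2}/(-f + \tfrac{n}{2}) \leq 1$, i.e., $\rho$ is $1$-Lipschitz.

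\textbf{Upper bound.} From $\rho(x) \leq \rho(p) + r(x)$, squaring yields
\[
-f(x) \leq \tfrac{1}{4}r(x)^{2} + \tfrac{1}{2}\rho(p)\, r(x) + \tfrac{1}{4}\rho(p)^{2} - \tfrac{n}{2},
\]
and for $r>2$ this absorbs into $-f(x) \leq \tfrac{1}{4}r^{2} + Cr$, proving the upper half.

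\textbf{Lower bound.} The plan is to construct a quasi-geodesic ray from $p$ along which $\rho$ grows at rate close to one. Let $\gamma(s)$ be the integral curve of $\nabla\rho$ (equivalently, a reparametrization of $-\nabla f$) starting at a non-critical point of $f$ near $p$, parametrized by arclength. Along $\gamma$,
\[
\frac{d\rho}{ds} = |\nabla\rho| = \sqrt{1 - \frac{4(S + n/2)}{\rho^{2}}},
\]
and the elementary inequality $\sqrt{1-x} \geq 1-x$ on $[0,1]$ gives
\[
\rho(\gamma(s)) \geq \rho(p) + s - E(s), \qquad E(s) := 4\int_{0}^{s} \frac{S(\gamma(\tau)) + n/2}{\rho(\gamma(\tau))^{2}}\, d\tau.
\]
Combined with $d(p,\gamma(s)) \leq s$ and the $1$-Lipschitz upper bound on $\rho$, this forces $\gamma$ to be a quasi-geodesic with defect $E$: at the time $s_{r}$ when $\gamma$ first meets $\partial B_{p}(r)$, one has $s_{r} \leq r + E(s_{r})$. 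If one can establish $E(s) \leq C\sqrt{s\ln s}$, then $s_{r} = r + O(\sqrt{r\ln r})$, hence $\rho(\gamma(s_{r})) \geq \rho(p) + r - C\sqrt{r\ln r}$, and squaring yields the claimed bound $-f(\gamma(s_{r})) \geq \tfrac{1}{4}r^{2} - Cr^{3/2}\sqrt{\ln r}$.

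\textbf{Main obstacle.} The crux is the averaged estimate $E(s) \leq C\sqrt{s\ln s}$. Since $S$ is a priori unbounded above on an expander, an integral bound on the scalar curvature along the flow line is required. The natural tool is the PDE
\[
\Delta_{f} S = -S - 2|Ric|^{2},
\]
a consequence of the soliton equation and the Bianchi identity; a weighted Cauchy--Schwarz or Moser iteration argument with a logarithmic cutoff should produce the desired bound, and the $\sqrt{\ln r}$ factor in the final error term reflects precisely the cost of this logarithmic cutoff.
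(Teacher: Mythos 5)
Your upper bound argument is correct and essentially identical to the paper's: both follow from the $1$-Lipschitz property of $2\sqrt{-f + n/2}$, itself a direct consequence of $|\nabla f|^2 + S = -f$ and $S \geq -n/2$.

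The lower bound, however, has a genuine gap that you yourself flag as "the main obstacle," and it is not a gap that can be waved away. Your plan is to flow along $\nabla\rho$, show $\rho(\gamma(s)) \geq \rho(p) + s - E(s)$ where $E(s) = 4\int_0^s (S + n/2)\rho^{-2}\,d\tau$, and conclude once $E(s) \leq C\sqrt{s\ln s}$ is established. But you give no actual proof of this estimate, only the suggestion that "a weighted Cauchy--Schwarz or Moser iteration argument with a logarithmic cutoff should produce the desired bound." This is the entire content of the theorem: $E(s) = O(\sqrt{s\ln s})$ is essentially equivalent to the conclusion you are trying to reach (indeed, since $\rho^2 \approx \tau^2$ along $\gamma$, it amounts to asserting $S + n/2 = O(\tau^{3/2}\sqrt{\ln\tau})$ on average along the flow line, which is not a standard consequence of $\Delta_f S = -S - 2|Ric|^2$). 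Moser iteration requires a Sobolev inequality with controlled constants, which on an expander with unbounded geometry is not available for free; and the Cao--Zhou type potential estimates you cite in the setup are for shrinkers, not expanders. There are also unaddressed technical issues: the integral curve of $\nabla\rho$ may encounter critical points of $f$ or fail to escape to infinity, and the passage from "$\rho$ grows along $\gamma$" to "$\rho$ is large on $\partial B_p(r)$" requires $\gamma$ to actually reach $\partial B_p(r)$.

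The paper takes a completely different route for the lower bound: it sets $u := n/2 - f$, multiplies the identity $\Delta e^{2k\sqrt u}$ by $u^{k^2}$, integrates by parts over $B_p(r)$, and after a pointwise algebraic inequality (verified via a discriminant computation) arrives at the differential inequality $k\,w(r) \leq w'(r)$ for $w(r) := \int_{B_p(r)} |\nabla u|\,u^{k^2 - 1/2}e^{2k\sqrt u}$. Nontriviality gives $w(r_0) > 0$ for some $r_0$, hence $w(r) \geq Ce^{kr}$, and combining this with a volume-comparison area bound $A(\partial B_p(r)) \leq Ce^{c(n)r}$ and the quadratic upper bound on $u$ yields $\sup_{\partial B_p(r)} 2\sqrt u \geq r - 2k\ln r - c\,r/k$; optimizing $k = \sqrt{r/\ln r}$ produces exactly the $r^{3/2}\sqrt{\ln r}$ error. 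Note that the $\sqrt{\ln r}$ in the paper arises from balancing two genuine competing terms in an optimization over $k$, not from a logarithmic cutoff as you speculate. You should either supply a real proof of $E(s) \leq C\sqrt{s\ln s}$ (and justify the global behavior of the flow line), or switch to the paper's integral method.
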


At this point, it seems interesting to compare our result with the case of
shrinking gradient Ricci solitons. It has been shown by Cao and Zhou \cite%
{CZ} that

\begin{equation*}
\frac{1}{4} \left(r(x) -c\right)^2\leq f(x) \leq \frac{1}{4} \left(r(x)
+c\right)^2
\end{equation*}%
on a nontrivial, noncompact, shrinking gradient Ricci soliton. We also point
out that it was first observed in \cite{PRS} that the gradient of the
potential function $f$ must be unbounded for a nontrivial expanding gradient
Ricci soliton.

Another issue we resolve here is about the volume growth lower bound for
shrinking gradient Ricci solitons. As well known, volume growth rate is an
important piece of geometric information. In \cite{CZ}, it was proved that
the volume of a shrinking gradient Ricci soliton is at most of polynomial
growth of order $n,$ the dimension of the underlying manifold. Concerning
the lower bound, when the Ricci curvature is bounded, then it is known \cite%
{CN} the volume grows at least linearly. The question whether this is the
case for general shrinking gradient Ricci solitons has been posed to the
authors by Huaidong Cao and Lei Ni, respectively. We confirm this to be the
case here. Note that this is sharp as shown by the cylinder examples.
Indeed, for $M=\mathbb{R}\times N^{n-1},$ where $N^{n-1}$ is an Einstein
manifold such that $Ric_{N}=\frac{1}{2}g_{N}$ and $\mathbb{R}$ is the
Gaussian shrinking soliton with potential function $f=\frac{1}{4}\left\vert
x\right\vert ^{2},$ the volume of $M$ grows linearly.

\begin{theorem}
\label{Volume}Let $\left( M,g,f\right) $ be a noncompact shrinking gradient
Ricci soliton. Then there exists a constant $C>0$ such that 
\begin{equation*}
V\left( B_{p}\left( r\right) \right) \geq Cr\ \ \ \text{for all }r>0.
\end{equation*}
\end{theorem}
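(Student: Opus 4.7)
The approach is to combine Cao--Zhou's quadratic estimate for the potential function $f$ with a coarea computation on the level sets of $f$, reducing the desired linear volume lower bound to an area lower bound on the level sets.

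After normalizing $f$ additively, the shrinker equation yields the identities $\Delta f=\tfrac n2-S$ and $|\nabla f|^2+S=f$; since $S\ge 0$ by B.-L.~Chen, one has $|\nabla f|\le\sqrt{f}$.  The Cao--Zhou estimate $\tfrac14(r-c)^2\le f\le\tfrac14(r+c)^2$ then ensures that $f$ is a smooth proper exhaustion of $M$ with compact level sets $\Sigma_t:=\{f=t\}$ for $t$ large, and the sublevel set $D(t):=\{f\le t\}$ is sandwiched between $B_p(2\sqrt{t}-c_1)$ and $B_p(2\sqrt{t}+c_1)$.  By the coarea formula together with $|\nabla f|\le\sqrt{s}$ on $\Sigma_s$,
\[
V\bigl(D(t)\bigr)=\int_0^t\int_{\Sigma_s}\frac{1}{|\nabla f|}\,dA\,ds\ \ge\ \int_0^t\frac{A(\Sigma_s)}{\sqrt{s}}\,ds,
\]
where $A(\Sigma_s)$ denotes the $(n-1)$-area.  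Consequently, a uniform bound $A(\Sigma_s)\ge c_0>0$ for $s$ large would yield $V(D(t))\ge c_0(2\sqrt{t}-c_2)$, and hence $V(B_p(r))\ge Cr$ for $r$ large (for small $r$ a lower bound is automatic).

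To bound $A(\Sigma_s)$ from below, combine the divergence identity $\int_{\Sigma_s}|\nabla f|\,dA=\int_{D(s)}(\tfrac n2-S)\,dv\le\tfrac n2 V(s)$, with $V(s):=V(D(s))$, and the Cauchy--Schwarz inequality $A(\Sigma_s)^2\le\int_{\Sigma_s}|\nabla f|\,dA\cdot\int_{\Sigma_s}|\nabla f|^{-1}\,dA=\int_{\Sigma_s}|\nabla f|\,dA\cdot V'(s)$, to derive the differential inequality
\[
(V^2)'(s)\ \ge\ \frac{4}{n}A(\Sigma_s)^2.
\]
Integration then converts the desired linear volume growth of $M$ into a time-averaged area lower bound of the form $\int_{s_0}^s A(\Sigma_u)^2\,du\gtrsim s$.

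The main obstacle is ruling out collapse of the level sets, i.e., excluding the possibility that $A(\Sigma_s)\to 0$ along a subsequence.  Because the linear growth rate of $f$ is infinite on a shrinker, the preceding Theorems~\ref{Estimate} and \ref{Rigidity1} cannot be applied directly, and one must invoke the full shrinking equation $Ric_f=\tfrac12 g$, not merely $Ric_f\ge 0$.  A natural route is by contradiction: assuming $V(B_p(r_k))/r_k\to 0$ along some $r_k\to\infty$, one combines the first-variation formula for $A(\Sigma_s)$, the Bochner identity applied to the distance-like function $\rho:=2\sqrt{f}$ (which satisfies $|\nabla\rho|\le 1$), and Cao--Zhou's global integral identity $\int f e^{-f}\,dv=\tfrac n2\int e^{-f}\,dv<\infty$, to extract a contradiction with the positivity of $Ric_f$.
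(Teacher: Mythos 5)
Your setup via the coarea formula, the divergence identity $\int_{\Sigma_s}|\nabla f|\,dA=\int_{D(s)}(\tfrac n2-S)\,dv\le\tfrac n2 V(s)$, and the Cauchy--Schwarz inequality leading to $(V^2)'(s)\ge\tfrac 4n A(\Sigma_s)^2$ is all correct and is a standard reduction; but note that this last differential inequality bounds $A(\Sigma_s)^2$ \emph{from above} by $\tfrac n4 (V^2)'(s)$, so integrating it converts a hypothetical lower bound on $\int A(\Sigma_u)^2\,du$ into a lower bound on $V$, not the other way around. Either way, you have correctly identified that everything hinges on a noncollapsing statement for the level sets $\Sigma_s$ (equivalently a positive lower bound on $V(D(t+1))-V(D(t))$), and this is precisely the hard content of the theorem. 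Your proposal does not prove it: the final paragraph only gestures at a contradiction via ``the Bochner identity applied to $\rho=2\sqrt f$'' and the integral identity $\int_M f e^{-f}=\tfrac n2\int_M e^{-f}$, without an actual argument. That is a genuine gap, not a technicality. Merely knowing $Ric_f=\tfrac12 g>0$ together with these integral identities does not rule out collapse; the Bochner identity applied to $\rho$ yields information already contained in the soliton identities you started from, and the Cao--Zhou integral identity is a finiteness statement in weighted volume, which says nothing directly about unweighted areas of level sets.

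The paper's actual proof uses a substantially stronger input that your outline does not invoke at all, namely the logarithmic Sobolev inequality of Carillo--Ni (Perelman's entropy bound for shrinkers). The structure there is: first a lower bound $V(B_x(1))\ge V(B_p(1))e^{-C(n)d(p,x)}$ on unit balls via second-variation/Laplace comparison along geodesics to $p$ using the quadratic potential estimates (Lemma~\ref{Decay}); then Lemma~\ref{Infinite} shows $Vol(M)=\infty$ by deriving from the log-Sobolev inequality a differential inequality $t y'(t)-2y(t)\log y(t)\le C y(t-1)$ for the tail volume $y(t)$, bootstrapping it to exponential decay of arbitrarily large rate, and contradicting Lemma~\ref{Decay}; finally, Theorem~\ref{Vol_Shrink} runs an inductive argument, again powered by the log-Sobolev inequality applied to annular cutoffs, showing that if $V(D(r))\le\varepsilon r$ for one large $r$ then $V(D(k))\le 2\varepsilon r$ for all $k\ge r$, hence $Vol(M)<\infty$, contradicting Lemma~\ref{Infinite}. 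The log-Sobolev inequality is the mechanism that trades a small annular volume for a small entropy term, and without it (or some comparably strong global input) there is no known route to noncollapsing of the level sets on a general shrinker with no curvature bound. To repair your argument you would need to bring this inequality (or an equivalent replacement) into the picture; the elementary identities alone are not enough.
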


The paper is organized as follows. In Section \ref{Vol_Comp}, we prove
Theorem \ref{Estimate} after some discussions on Laplacian and volume
comparison results. We then prove Theorem \ref{Gradient_Est} in Section \ref%
{f_harmonic}. In Section \ref{Rigid}, we study the structure of manifolds
with maximal bottom spectrum $\lambda _{1}(\Delta _{f})$ and prove Theorem %
\ref{Rigidity1}. In Section \ref{Expanding}, we consider the expanding Ricci
solitons and prove both Theorem \ref{Rigidity_Exp} and Theorem \ref{Growth}.
In the final Section \ref{Shrinkers}, we deal with the shrinking Ricci
solitons and prove Theorem \ref{Volume}.

We would like to thank Huai-Dong Cao for his interest and stimulating
comments, which lead us to improve both Theorem 5.1 and Theorem 6.1.

\section{Volume comparison theorem\label{Vol_Comp}}

In this section, following \cite{W}, we discuss Laplacian and volume
comparison results by assuming a lower bound on the Bakry-\'{E}mery
curvature tensor. As an immediate consequence, we obtain an upper bound
estimate for the bottom spectrum of $\Delta _{f}.$

Let $\left( M,g,e^{-f}dv\right) $ be a smooth metric measure space. Take any
point $x\in M$ and express the volume form in the geodesic polar coordinates
centered at $x$ as 
\begin{equation*}
dV|_{\exp _{x}\left( r\xi \right) }=J\left( x,r,\xi \right) drd\xi
\end{equation*}%
for $r>0$ and $\xi \in S_{x}M,$ a unit tangent vector at $x.$ It is well
known that if $y\in M$ is another point such that $y=\exp _{x}$ $\left( r\xi
\right) ,$ then 
\begin{equation*}
\Delta d\left( x,y\right) =\frac{J^{\prime }\left( x,r,\xi \right) }{J\left(
x,r,\xi \right) }\text{ \ and \ }\Delta _{f}d\left( x,y\right) =\frac{%
J_{f}^{\prime }\left( x,r,\xi \right) }{J_{f}\left( x,r,\xi \right) },
\end{equation*}%
where $J_{f}\left( x,r,\xi \right) :=e^{-f}J\left( x,r,\xi \right) $ is the $%
f$-volume form in the geodesic polar coordinates. For a set $\Omega $ we
will denote by $V\left( \Omega \right) $ the volume of $\Omega $ with
respect to the usual volume form $dv,$ and $V_{f}\left( \Omega \right) $ the 
$f$-volume of $\Omega .$

\begin{lemma}
\label{Vol_Ric_Neg} Let $\left( M,g,e^{-f}dv\right) $ be a complete smooth
metric measure space with $Ric_{f}\geq -\left( n-1\right).$ Assume for some
nonnegative constants $\alpha$ and $\beta,$ 
\begin{equation*}
\left\vert f\right\vert \left( x\right) \leq \alpha r\left( x\right) +\beta
\end{equation*}%
for $x\in M.$ Then there exists a constant $C>0$ such that the volume upper
bound 
\begin{equation*}
V_{f}\left( B_{p}\left( R\right) \right) \leq Ce^{\left( n-1+\alpha \right)
R}
\end{equation*}
holds for all $R>0.$
\end{lemma}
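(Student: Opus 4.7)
\emph{Plan.} I will follow the Bochner-Riccati comparison in the spirit of Wei-Wylie~\cite{W}. First, fixing $\xi\in S_pM$ and letting $\gamma(r)=\exp_p(r\xi)$ parametrize a unit-speed minimizing geodesic, I would apply the Bochner identity to the distance function $r$ along $\gamma$, use Cauchy-Schwarz $|{\rm Hess}(r)|^2\ge(\Delta r)^2/(n-1)$, and substitute ${\rm Ric}={\rm Ric}_f-{\rm Hess}(f)\ge -(n-1)-f''(r)$ (where $f''(r):=(f\circ\gamma)''(r)$) to obtain the Riccati-type inequality
\begin{equation*}
(\Delta r)'(r)+\frac{(\Delta r)^2(r)}{n-1}\le (n-1)+f''(r).
\end{equation*}

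Next, comparing $m(r):=\Delta r$ with the hyperbolic model $\tilde m(r)=(n-1)\coth r$ (which realizes equality above with $f\equiv 0$), the difference $u:=m-\tilde m$ vanishes at $0$ and satisfies the linear inequality $u'+\frac{m+\tilde m}{n-1}u\le f''$, with integrating factor $\mu(r):=\sinh(r)J^{1/(n-1)}(r,\xi)$ satisfying $\mu(0)=\mu'(0)=0$. Multiplying through, integrating on $(0,r)$, and then integrating the $\mu f''$ forcing by parts, the boundary term $\mu(r)f'(r)$ precisely cancels the $-f'(r)$ appearing in $\Delta_f r=\Delta r-f'$ when one passes from $m$ to $\Delta_f r$. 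What remains is an integral expression in which only $|f|$ enters, which the assumption $|f|\le\alpha r+\beta$ allows one to bound. This yields the $f$-Laplacian comparison
\begin{equation*}
(\log J_f)'(r,\xi)=\Delta_f r\le (n-1)\coth r+O(1),
\end{equation*}
with error depending only on $\alpha$ and $\beta$.

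Finally, integrating from $\epsilon\downarrow 0$ (using $J_f(r,\xi)/r^{n-1}\to e^{-f(p)}$) and employing $\sinh^{n-1}(r)\le Ce^{(n-1)r}$ together with $|f|\le\alpha r+\beta$, I obtain the pointwise density bound
\begin{equation*}
J_f(r,\xi)\le C\sinh^{n-1}(r)e^{-f(\gamma(r))}\le C'e^{(n-1+\alpha)r},
\end{equation*}
whereupon integration over $\xi\in S_pM$ and $r\in(0,R]$ delivers $V_f(B_p(R))\le Ce^{(n-1+\alpha)R}$. The main obstacle is the middle step: the Bochner forcing term $f''$ is not pointwise bounded by the linear-growth hypothesis on $|f|$, so a direct Riccati comparison fails; the saving cancellation between the boundary $\mu(r)f'(r)$ produced by integrating $f''$ against $\mu$ and the $-f'$ built into $\Delta_f r$ is what reduces everything to an expression involving $|f|$ alone, and hence to the constants $\alpha,\beta$.
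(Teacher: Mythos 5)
Your plan runs into a genuine obstruction at the middle step. You want to derive a pointwise $f$-Laplacian comparison $\Delta_f r\le (n-1)\coth r+O(1)$ with $O(1)$ depending only on $\alpha,\beta$, but this is false under the hypotheses. After your integration by parts (which is essentially Wei--Wylie) the error term is
\begin{equation*}
\frac{2}{\sinh^2 r}\int_0^r\bigl(f(t)-f(r)\bigr)\cosh(2t)\,dt
= -\frac{1}{\sinh^2 r}\int_0^r f'(s)\sinh(2s)\,ds,
\end{equation*}
and because the weight $\cosh(2t)/\sinh^2 r$ concentrates near $t=r$, this quantity is governed by $|f'|$ near the endpoint $r$, not by $|f|$. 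The linear growth assumption $|f|\le\alpha r+\beta$ (even together with $Ric_f\ge-(n-1)$, which only gives a lower bound on $f''$ along the geodesic when $Ric$ is itself controlled) does \emph{not} bound $|f'|$; a crude estimate $|f(t)-f(r)|\le 2\alpha r+2\beta$ on the whole interval $[0,r]$ gives an error term of order $\alpha r$, producing after integration a bound like $e^{(n-1)R+c\alpha R^2}$ rather than $e^{(n-1+\alpha)R}$. There is also a circularity in your proposed integrating factor $\mu=\sinh(r)\,J^{1/(n-1)}$: the quantities $\mu'/\mu$ and $\mu''$ that survive the second integration by parts involve the unknown Jacobian $J$ (equivalently $\Delta r$), which is precisely what you are trying to estimate, so the ``remaining integral expression'' is not actually expressible in terms of $|f|$ alone.

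The paper sidesteps both problems by never establishing a pointwise comparison. It instead bounds the integral $\int_1^r \Delta_f r\,dt=\log J_f(r)-\log J_f(1)$ directly. Setting $u=J_f'/J_f$ so that $J'/J=u+f'$, the Riccati inequality after integrating in $t$ gives
\begin{equation*}
u(r)+\frac{1}{n-1}\int_1^r\bigl(u+f'\bigr)^2\,dt\le (n-1)r+C_0,
\end{equation*}
and then Cauchy--Schwarz converts the quadratic integral into $\frac{1}{(n-1)r}\bigl(f(r)-f(1)+\int_1^r u\bigr)^2$; crucially only $f(r)-f(1)$ appears, which \emph{is} controlled by $|f|\le\alpha r+\beta$. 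One then has a self-referential differential inequality in $\Phi(r):=\int_1^r u$, which a short ODE-style contradiction argument closes to give $\Phi(r)\le(n-1+\alpha)r+O(1)$, hence the volume bound. The moral is that the bad pointwise contributions from $-f'(r)$ cancel only after integration in $r$, and the Cauchy--Schwarz step is precisely the device that captures this cancellation; a pointwise Laplacian comparison in terms of $|f|$ alone is simply unavailable here.
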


\begin{proof}[Proof of Lemma \protect\ref{Vol_Ric_Neg}]
As discussed above, we write $dV|_{\exp _{p}\left( r\xi \right) }=J\left(
r,\xi \right) drd\xi $ for $\xi \in S_{p}M.$ Let $J_{f}\left( r,\xi \right)
=e^{-f\left( r,\xi \right) }J\left( r,\xi \right) $ be the corresponding
weighted volume form. In the following, we will omit the dependence of these
quantities on $\xi .$ Along a minimizing geodesic starting from $p$, we have 
\begin{equation*}
\left( \frac{J^{\prime }}{J}\right) ^{\prime }\left( r\right) +\frac{1}{n-1}%
\left( \frac{J^{\prime }}{J}\right) ^{2}\left( r\right) +Ric\left( \frac{%
\partial }{\partial r},\frac{\partial }{\partial r}\right) \leq 0,
\end{equation*}%
where the differentiation is with respect to the $r$ variable. Integrating
this inequality from $1$ to $r$ and using the assumption that 
\begin{equation*}
Ric\left( \frac{\partial }{\partial r},\frac{\partial }{\partial r}\right)
+f^{\prime \prime }\left( r\right) \geq -\left( n-1\right) ,
\end{equation*}%
we get 
\begin{equation*}
\frac{J^{\prime }}{J}\left( r\right) +\frac{1}{n-1}\int_{1}^{r}\left( \frac{%
J^{\prime }}{J}\right) ^{2}\left( t\right) dt-f^{\prime }\left( r\right)
\leq \left( n-1\right) r+C_{0}
\end{equation*}%
for some constant $C_{0}>0$ independent of $r.$ Let us denote%
\begin{equation*}
u\left( t\right) :=\frac{J_{f}^{\prime }\left( t\right) }{J_{f}\left(
t\right) }=\frac{J^{\prime }}{J}\left( r\right) -f^{\prime }\left( r\right) .
\end{equation*}%
Then for any $r\geq 1,$ 
\begin{equation}
u\left( r\right) +\frac{1}{n-1}\int_{1}^{r}\left( u\left( t\right)
+f^{\prime }\left( t\right) \right) ^{2}dt\leq \left( n-1\right) r+C_{0}.
\label{v2}
\end{equation}%
The Cauchy-Schwarz inequality implies that 
\begin{equation*}
\int_{1}^{r}\left( u\left( t\right) +f^{\prime }\left( t\right) \right)
^{2}dt\geq \left( r-1\right) ^{-1}\left\{ \int_{1}^{r}\left( u\left(
t\right) +f^{\prime }\left( t\right) \right) dt\right\} ^{2}.
\end{equation*}%
Therefore, from (\ref{v2}) we obtain 
\begin{equation}
u\left( r\right) +\frac{1}{\left( n-1\right) r}\left( f\left( r\right)
-f\left( 1\right) +\int_{1}^{r}u\left( t\right) dt\right) ^{2}\leq \left(
n-1\right) r+C_{0}.  \label{v3}
\end{equation}%
We now claim that for any $r\geq 1,$ 
\begin{equation}
\int_{1}^{r}u\left( t\right) dt\leq \left( n-1+\alpha \right) r+\alpha
+2\beta +C_{0}.  \label{v4}
\end{equation}%
To prove this, define 
\begin{equation*}
v\left( r\right) :=\left( n-1+\alpha \right) r+\alpha +2\beta
+C_{0}-\int_{1}^{r}u\left( t\right) dt.
\end{equation*}%
We show instead that $v\left( r\right) \geq 0$ for all $r\geq 1.$ Clearly, $%
v\left( 1\right) >0.$ Suppose that $v$ does not remain positive for all $%
r\geq 1$ and let $R>1$ be the first number such that $v\left( R\right) =0.$
Then, 
\begin{equation*}
\int_{1}^{R}u\left( t\right) dt=\left( n-1+\alpha \right) R+\alpha +2\beta
+C_{0}.
\end{equation*}%
In other words, 
\begin{eqnarray*}
&&\frac{1}{\left( n-1\right) R}\left( f\left( R\right) -f\left( 1\right)
+\int_{1}^{R}u\left( t\right) dt\right) ^{2} \\
&=&\frac{1}{\left( n-1\right) R}\left( f\left( R\right) -f\left( 1\right)
+\left( n-1+\alpha \right) R+\alpha +2\beta +C_{0}\right) ^{2} \\
&\geq &\frac{1}{\left( n-1\right) R}\left( (n-1)R+C_{0}\right) ^{2}\geq
\left( n-1\right) R+2C_{0}.
\end{eqnarray*}%
Plugging this into (\ref{v3}), we conclude $u\left( R\right) \leq -C_{0}<0.$
This shows that $v^{\prime }\left( R\right) =\left( n-1+\alpha \right)
-u\left( R\right) >0,$ which implies the existence of a small enough $\delta
>0$ such that $v\left( R-\delta \right) <v\left( R\right) =0.$ This
obviously contradicts with the choice of $R.$

We have thus proved that (\ref{v4}) is true for any $r\geq 1,$ or 
\begin{equation*}
\log J_{f}\left( r\right) -\log J_{f}\left( 1\right) \leq \left( n-1+\alpha
\right) r+\alpha +2\beta +C_{0}.
\end{equation*}%
In particular, for $R\geq 1,$ we have the volume bound of the form 
\begin{equation*}
V_{f}\left( B_{p}\left( R\right) \right) \leq Ce^{\left( n-1+\alpha \right)
R},
\end{equation*}%
with the constant $C$ depending on $\alpha $, $\beta $ and $B_{p}\left(
1\right) .$
\end{proof}

We remark that in the special case of $f$ being bounded, hence $\alpha=0,$
the estimate becomes 
\begin{equation*}
V_{f}\left( B_{p}\left( R\right) \right) \leq Ce^{\left( n-1\right) R}
\end{equation*}
for $R\geq 0.$ This improves a result in \cite{W} in the sense that the rate
of exponential growth for the weighted volume does not depend on $%
\sup_{M}\left\vert f\right\vert.$

Lemma \ref{Vol_Ric_Neg} readily leads to the following estimate for the
bottom spectrum of $\Delta _{f}.$

\begin{theorem}
\label{Est_Ric_Neg} Let $\left( M,g,e^{-f}dv\right) $ be a complete smooth
metric measure space with $Ric_{f}\geq -\left( n-1\right).$ Assume the
linear growth rate of $f$ is $a.$ Then we have%
\begin{equation*}
\lambda _{1}\left( \Delta _{f}\right) \leq \frac{1}{4}\left( n-1+a\right)
^{2}.
\end{equation*}%
In particular, if $f$ is of sublinear growth, then the bottom spectrum of
the weighted Laplacian has the following sharp upper bound%
\begin{equation*}
\lambda _{1}\left( \Delta _{f}\right) \leq \frac{\left( n-1\right) ^{2}}{4}.
\end{equation*}
\end{theorem}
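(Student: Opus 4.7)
The plan is to carry out a Brooks-type argument: the weighted volume growth bound from Lemma \ref{Vol_Ric_Neg} feeds directly into an exponential test-function computation in the Rayleigh quotient.

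First, I would fix any $\alpha > a$. By the definition of linear growth rate, there exists $\beta = \beta(\alpha)$ so that $|f|(x) \leq \alpha\,r(x) + \beta$ on $M$. Applying Lemma \ref{Vol_Ric_Neg} then yields a constant $C = C(\alpha,\beta)$ with
\begin{equation*}
V_{f}(B_{p}(R)) \leq C\,e^{(n-1+\alpha)R}\quad\text{for all }R>0.
\end{equation*}
Set $\mu := n-1+\alpha$, fix an auxiliary parameter $\epsilon > 0$, and consider the Lipschitz function $\phi(x) := e^{-(\mu/2+\epsilon)\,r(x)}$ on $M$. A layer-cake computation, using integration by parts on $\int_0^\infty e^{-(\mu+2\epsilon)r}\,dV_f(B_p(r))$ together with the above volume bound, shows that $\phi \in L^2(e^{-f}dv)$ and in fact $\int_M \phi^2 e^{-f}\,dv \leq C/\epsilon$.

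Next, I would cut off $\phi$ to obtain compactly supported test functions admissible in the variational characterization of $\lambda_1(\Delta_f)$. Choose $\chi_R$ with $\chi_R \equiv 1$ on $B_p(R)$, $\chi_R \equiv 0$ off $B_p(2R)$, and $|\nabla \chi_R| \leq 2/R$, and set $\phi_R := \chi_R \phi$. Using $|\nabla r| = 1$ a.e.\ and the elementary inequality $(a+b)^2 \leq (1+\delta)a^2 + (1+1/\delta)b^2$, one gets
\begin{equation*}
\int_M |\nabla \phi_R|^2 e^{-f}dv \leq (1+\delta)\Bigl(\tfrac{\mu}{2}+\epsilon\Bigr)^2 \int_M \phi_R^2 e^{-f}dv + \frac{4(1+1/\delta)}{R^2}\int_M \phi^2 e^{-f}dv.
\end{equation*}
Since $\int_M \phi^2 e^{-f}dv$ is finite, the last error term tends to $0$ as $R \to \infty$, while $\int_M \phi_R^2 e^{-f} dv \to \int_M \phi^2 e^{-f} dv > 0$ by monotone convergence. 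Dividing and passing to the limit, then letting $\delta \to 0$, yields
\begin{equation*}
\lambda_1(\Delta_f) \leq \Bigl(\tfrac{\mu}{2}+\epsilon\Bigr)^2 = \Bigl(\tfrac{n-1+\alpha}{2}+\epsilon\Bigr)^2.
\end{equation*}

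Finally, I would send $\epsilon \to 0$ and then $\alpha \downarrow a$ to conclude $\lambda_1(\Delta_f) \leq \tfrac{1}{4}(n-1+a)^2$. The sublinear case is immediate since then $a = 0$. The only mildly delicate point is the non-smoothness of $r$ at the cut locus; this is handled by noting that the infimum in the variational characterization may be taken over Lipschitz functions of compact support, or equivalently by smoothing $\phi_R$. I do not expect any real obstacle beyond keeping track of the cutoff error, which is controlled precisely because the weighted $L^2$ norm of $\phi$ is already finite, a direct consequence of the sharp exponential growth rate $\mu = n-1+\alpha$ from Lemma \ref{Vol_Ric_Neg}.
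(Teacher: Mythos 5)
Your proposal is correct and follows essentially the same route as the paper: both exploit Lemma \ref{Vol_Ric_Neg} to get the weighted volume bound $V_f(B_p(R))\leq Ce^{(n-1+\alpha)R}$ and then plug a cut-off exponential test function into the Rayleigh quotient. The only cosmetic difference is that you first fix $\alpha>a$ and then send $\alpha\downarrow a$, while the paper absorbs this approximation into its single parameter $\varepsilon$; the substance is identical.
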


\begin{proof}[Proof of Theorem \protect\ref{Est_Ric_Neg}]
Let $\psi$ be a cut-off function on $B_{p}\left( R\right) $ such that $\psi
=1$ on $B_{p}\left( R-1\right) $ and $\left\vert \nabla \psi \right\vert
\leq 2.$ Set $\phi \left( y\right) :=e^{-\frac{\left( n-1+a+\varepsilon
\right) }{2}r\left( y\right) }\psi \left( y\right) $ as a test function in
the variational principle for $\lambda _{1}\left( \Delta _{f}\right),$ where 
$\varepsilon >0$ is an arbitrary positive constant. Then, by Lemma \ref%
{Vol_Ric_Neg}, we obtain 
\begin{equation*}
\lambda _{1}\left( \Delta _{f}\right) \leq \frac{\left( n-1+a+\varepsilon
\right) ^{2}}{4}.
\end{equation*}%
Since $\varepsilon $ is arbitrary, this implies $\lambda _{1}\left( \Delta
_{f}\right) \leq \frac{\left( n-1+a\right) ^{2}}{4}$.

In the case that $f$ is of sublinear growth, we can take $a=0.$ Therefore, $%
\lambda _{1}\left( \Delta _{f}\right) \leq \frac{1}{4}\left( n-1\right) ^{2}$%
and the Theorem is proved.
\end{proof}

\section{$\ f-$harmonic functions\label{f_harmonic}}

In this section, we establish the following gradient estimate for positive $%
f-$harmonic functions defined on $M.$

\begin{theorem}
\label{Grad_Est} Let $\left( M^{n},g,e^{-f}dv\right) $ be a complete smooth
metric measure space with $Ric_{f}\geq -\left( n-1\right) $. Assume that for
any $x\in M,$ 
\begin{equation*}
\sup_{y\in B_{x}\left( 1\right) }\left\vert f\left( y\right) -f\left(
x\right) \right\vert \leq a.
\end{equation*}%
Then there exists a constant $C\left( n,a\right) $ depending only on $n$ and 
$a$ such that for any $u>0$ with $\Delta _{f}u=0$ we have%
\begin{equation*}
\left\vert \nabla \log u\right\vert \leq C\left( n,a\right) .
\end{equation*}
\end{theorem}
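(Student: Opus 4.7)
The plan is to set $v=\log u$, so that $\Delta_{f}v=-|\nabla v|^{2}$, and to bound $Q:=|\nabla v|^{2}$ pointwise by running a Moser iteration for $Q$ against the weighted measure $e^{-f}dv$ on unit balls. Since the conclusion is pointwise, fix an arbitrary $x\in M$; the goal is to prove $Q(x)\leq C(n,a)$ by working on $B_{x}(2)$. The oscillation hypothesis on $f$ is precisely what makes $e^{-f}dv$ comparable to $dv$ up to a factor $e^{a}$ on $B_{x}(2)$, so the iteration can be closed with constants depending only on $n$ and $a$.

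The first step is an $L^{1}$-bound on $Q$: testing $\Delta_{f}v=-Q$ against $\phi^{2}$ for a standard cutoff $\phi$ supported in $B_{x}(2)$ with $\phi\equiv 1$ on $B_{x}(1)$ and integrating by parts against $e^{-f}dv$ gives $\int \phi^{2}Q\,e^{-f}dv\leq C\int|\nabla\phi|^{2}e^{-f}dv$, hence $\int_{B_{x}(1)}Q\,e^{-f}dv\leq C(n,a)V_{f}(B_{x}(2))$. The second step is to promote this to a supremum bound using the Bochner inequality $\Delta_{f}Q\geq 2|\mathrm{Hess}\,v|^{2}-2\langle\nabla v,\nabla Q\rangle-2(n-1)Q$. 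The classical Cheng--Yau move of bounding $|\mathrm{Hess}\,v|^{2}\geq(\Delta v)^{2}/n$ is unavailable here, since $\Delta v=-Q+\langle\nabla f,\nabla v\rangle$ would re-introduce the pointwise uncontrolled term $|\nabla f|^{2}|\nabla v|^{2}$. I would instead use the Kato-type inequality $|\mathrm{Hess}\,v|^{2}\geq|\nabla Q|^{2}/(4Q)$, which is purely metric. Multiplying Bochner by $\phi^{2}Q^{p-1}$, integrating, and absorbing the drift $\langle\nabla v,\nabla Q\rangle$ into the Kato term via Young's inequality (using $|\nabla v|=Q^{1/2}$) yields a Caccioppoli-type bound
\begin{equation*}
\int \phi^{2}|\nabla Q^{p/2}|^{2}e^{-f}dv\;\leq\;C(n,p)\int(\phi^{2}+|\nabla\phi|^{2})(Q^{p}+Q^{p+1})e^{-f}dv.
\end{equation*}

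The main obstacle is closing the Moser iteration. To pass from this energy bound to an $L^{\infty}$-bound one needs a weighted Sobolev inequality on $(B_{x}(2),e^{-f}dv)$ with constants depending only on $n$ and $a$. The oscillation hypothesis reduces this to an unweighted Sobolev inequality on $B_{x}(2)$, and the latter follows from the volume doubling and $L^{2}$-Poincar\'{e} inequality on unit balls, which can be extracted from a localized version of the argument behind Lemma \ref{Vol_Ric_Neg} (applied around arbitrary centres $y\in B_{x}(2)$). Once the Moser iteration closes, $\sup_{B_{x}(1/2)}Q\leq C(n,a)\int_{B_{x}(1)}Q\,e^{-f}dv\leq C(n,a)$, giving $|\nabla\log u|(x)\leq C(n,a)$ and hence the theorem.
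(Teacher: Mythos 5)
Your route --- set $v=\log u$ and run Moser iteration on $Q=|\nabla v|^{2}$ --- differs from the paper's, and it breaks down precisely at the point you flag as ``the main obstacle,'' but the obstacle is not the Sobolev constant: it is the superlinear term $Q^{p+1}$ in your Caccioppoli estimate. After multiplying the Bochner inequality $\Delta_{f}Q\geq 2|\mathrm{Hess}\,v|^{2}-2\langle\nabla v,\nabla Q\rangle-2(n-1)Q$ by $\phi^{2}Q^{p-1}$, absorbing the drift $-2\langle\nabla v,\nabla Q\rangle$ into the Kato term $|\nabla Q|^{2}/(2Q)$ via Young's inequality necessarily costs a term proportional to $Q^{p+1}$, so the resulting scheme is that of $\Delta_{f}Q\gtrsim -CQ-CQ^{2}$. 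For such an inequality the effective zeroth-order potential is $\sim Q$, and Moser iteration produces a local supremum bound only if $Q$ is a priori in $L^{q}$ with $q>\nu/2$; from the $L^{1}$ bound $\int_{B_{x}(1)}Q\,e^{-f}\leq CV_{f}(B_{x}(2))$ alone, the very first iteration step at $p=1$ already requires $\|Q\|_{L^{2}}$, which you do not have, and there is no way to bootstrap. You correctly recognize that the Cheng--Yau trace bound $|\mathrm{Hess}\,v|^{2}\geq(\Delta v)^{2}/n$ is unavailable because $\Delta v=-Q+\langle\nabla f,\nabla v\rangle$ reintroduces $|\nabla f|$; but the Kato inequality is not an adequate substitute, since the trace bound is exactly what generates the negative $Q^{2}$ term that tames the quadratic nonlinearity, whereas Kato only supplies a gradient term.

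The paper avoids the logarithm at the Bochner step. For the $f$-harmonic function $u$ itself, $\langle\nabla\Delta_{f}u,\nabla u\rangle\equiv 0$, so Bochner plus the curvature bound gives the \emph{linear} subsolution inequality $\Delta_{f}|\nabla u|^{2}\geq -2(n-1)|\nabla u|^{2}$, with no drift and no superlinear term. Moser iteration for this closes starting from $L^{1}$ and yields the mean-value inequality $\sup_{B_{x}(1/16)}|\nabla u|^{2}\leq C\,V_{f}(B_{x}(1/8))^{-1}\int_{B_{x}(1/8)}|\nabla u|^{2}e^{-f}$, using the local Poincar\'{e} and Sobolev inequalities of Lemmas \ref{NP} and \ref{NS}. (These come from the small-ball Laplacian comparison (\ref{Laplace_comp}) and its consequence (\ref{a1}) under the oscillation hypothesis, not from Lemma \ref{Vol_Ric_Neg}, which only controls large-scale volume growth, not the scale-invariant doubling needed on unit balls.) A Caccioppoli estimate for $\Delta_{f}u=0$ then bounds the right-hand side by $C(\sup_{B_{x}(1/4)}u)^{2}$, and the Harnack inequality for $u$ (again via Moser) replaces $\sup u$ by $u(x)$, giving $|\nabla u|(x)\leq C\,u(x)$ and hence $|\nabla\log u|\leq C(n,a)$. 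If you want to salvage your outline, you should run Moser on $|\nabla u|^{2}$ rather than on $|\nabla\log u|^{2}$ and supply the Harnack step at the end.
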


Under the stronger assumption that $\left\vert \nabla f\right\vert \leq a,$
the result was proved in \cite{Wu} by essentially following Yau's classical
argument. However, it seems no longer possible to apply Yau's approach
directly once the hypothesis on $f$ only involves its oscillation on unit
balls. Note that the theorem in particular is applicable to the case $f$ is
bounded on $M.$ Our proof of Theorem \ref{Grad_Est} follows the strategy in 
\cite{MW}. We will first obtain local Neumann Poincar\'{e} and Sobolev
inequalities and then use the DeGiorgi-Nash-Moser theory. Let us first
recall the following Laplace comparison theorem from \cite{W},%
\begin{equation}
\Delta _{f}d\left( x,y\right) \leq \left( n-1\right) \coth r+\frac{2}{\sinh
^{2}r}\int_{0}^{r}\left( f\left( t\right) -f\left( r\right) \right) \cosh
\left( 2t\right) dt,  \label{Laplace_comp}
\end{equation}%
where $r:=d\left( x,y\right) ,$ $f\left( t\right) :=f\left( \gamma \left(
t\right) \right) $ and $\gamma \left( t\right) $ is a minimizing normal
geodesic such that $\gamma \left( 0\right) =x$ and $\gamma \left( r\right)
=y.$

Using the assumption on $f$ that $\left\vert f\left( t\right) -f\left(
r\right) \right\vert \leq a,$ we get 
\begin{equation*}
\Delta _{f}d\left( x,y\right) \leq \left( n-1+2a\right) \coth r
\end{equation*}%
for any $0<r<1.$ In particular, this yields 
\begin{eqnarray}
\frac{J_{f}\left( x,r_{2},\xi \right) }{J_{f}\left( x,r_{1},\xi \right) }
&\leq &\left( \frac{\sinh \left( r_{2}\right) }{\sinh \left( r_{1}\right) }%
\right) ^{n-1+2a},  \label{a1} \\
\frac{J\left( x,r_{2},\xi \right) }{J\left( x,r_{1},\xi \right) } &\leq
&e^{2a}\left( \frac{\sinh \left( r_{2}\right) }{\sinh \left( r_{1}\right) }%
\right) ^{n-1+2a}  \notag
\end{eqnarray}%
for any $0<r_{1}<r_{2}<1.$

Now the arguments in \cite{Bu} and \cite{HK} (see also \cite{MW} for the
case of smooth metric measure spaces) imply that we have the following local
Neumann Poincar\'{e} and Sobolev inequalities.

\begin{lemma}
\label{NP} Let $\left( M,g,e^{-f}dv\right) $ be a smooth metric measure
space of dimension $n$ with $Ric_{f}\geq -\left( n-1\right) .$ Assume that
for any $x\in M,$ 
\begin{equation*}
\sup_{y\in B_{x}\left( 1\right) }\left\vert f\left( y\right) -f\left(
x\right) \right\vert \leq a.
\end{equation*}%
Then for $x\in M$ and $0<r<1$ we have 
\begin{equation*}
\int_{B_{x}\left( r\right) }\left\vert \varphi -\varphi _{B_{x}\left(
r\right) }\right\vert ^{2}\leq C\cdot r^{2}\int_{B_{x}\left( r\right)
}\left\vert \nabla \varphi \right\vert ^{2}
\end{equation*}%
for any $\varphi \in C^{\infty }\left( B_{x}\left( r\right) \right) ,$ where 
$\varphi _{B_{x}\left( r\right) }:=V^{-1}\left( B_{x}\left( r\right) \right)
\int_{B_{x}\left( r\right) }\varphi $ and the constant $C$ depending only on
the dimension $n$ and $a.$
\end{lemma}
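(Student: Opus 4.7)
The plan is to fit this lemma into the Buser \cite{Bu} / Haj\l{}asz--Koskela \cite{HK} framework, adapted to the smooth metric measure space setting exactly as in \cite{MW}. That framework reduces the Neumann $L^2$ Poincar\'{e} inequality on a ball to two ingredients: a uniform local volume doubling property for the unweighted measure, and a segment/chaining inequality on balls of radius at most one. The oscillation hypothesis on $f$ will enter only through the Jacobian comparison (\ref{a1}), so both ingredients will hold with constants depending only on $n$ and $a$.

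For doubling, integrating the pointwise Jacobian estimate from (\ref{a1}) over $\xi \in S_x M$ and in the radial variable produces, for every $x \in M$ and $0 < \rho \leq 1/2$,
\[
V(B_x(2\rho)) \leq C(n,a)\, V(B_x(\rho)),
\]
with the constant uniform in the basepoint, since the oscillation bound on $f$ holds at every point of $M$. For the segment inequality, the same Jacobian bound, applied by transferring the measure along minimizing geodesics as in the Cheeger--Colding / Buser argument, gives
\[
\int_{B_x(r)} \int_{B_x(r)} \int_{\gamma_{y,z}} g(s)\, ds\, dy\, dz \;\leq\; C(n,a)\, r\, V(B_x(r)) \int_{B_x(2r)} g
\]
for any nonnegative measurable $g$ and $0 < r \leq 1/2$, where $\gamma_{y,z}$ denotes any minimizing geodesic from $y$ to $z$.

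From these two ingredients the Poincar\'{e} inequality follows along a now standard route. The elementary averaging identity
\[
\int_{B_x(r)} |\varphi - \varphi_{B_x(r)}|^2 \leq \frac{1}{V(B_x(r))} \int_{B_x(r)} \int_{B_x(r)} |\varphi(y) - \varphi(z)|^2 \, dy \, dz,
\]
combined with the pointwise bound $|\varphi(y) - \varphi(z)|^2 \leq 2r \int_{\gamma_{y,z}} |\nabla \varphi|^2$ (Cauchy--Schwarz applied along $\gamma_{y,z}$) and the segment inequality with $g = |\nabla \varphi|^2$, produces a weak $(2,2)$-Poincar\'{e} inequality with right-hand side $C(n,a)\, r^2 \int_{B_x(2r)} |\nabla \varphi|^2$. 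The Jerison / Saloff-Coste covering iteration, which is powered solely by the doubling constant above, then self-improves this into the Neumann Poincar\'{e} inequality on $B_x(r)$ with the gradient integrated over $B_x(r)$, as claimed.

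The main obstacle is to track uniformity of every constant in the basepoint $x$ and the radius $r \in (0,1)$. Because the hypothesis on $f$ is only an oscillation bound on unit balls, the Jacobian comparison (\ref{a1}) degenerates beyond scale one, and there is no hope of doubling or a segment inequality at arbitrary scales; fortunately, the lemma is stated precisely in the range $0 < r < 1$, and for such radii the factor $e^{2a}$ and the exponent $n-1+2a$ in (\ref{a1}) depend only on $n$ and $a$, which is exactly what forces the doubling and segment constants, and hence the final Poincar\'{e} constant, to have the form $C(n,a)$.
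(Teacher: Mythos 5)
Your proposal is correct and follows essentially the same route the paper intends: the paper gives no proof for Lemma \ref{NP} beyond citing Buser, Haj\l{}asz--Koskela, and \cite{MW}, and its only substantive local input is the Jacobian comparison (\ref{a1}), which is precisely what you use to derive doubling and a segment/weak-Poincar\'e inequality before self-improving via the standard Jerison--Saloff-Coste iteration. Your care about keeping all radii below scale $1$ (where (\ref{a1}) is valid and uniform in the basepoint) is exactly the point that makes the constants depend only on $n$ and $a$, which is what the paper emphasizes about this lemma.
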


Note that the conclusion here that the constant $C$ is independent of $x$ is
stronger than that in \cite{MW}. This is due to the more restrictive
assumption on $f$ in Lemma \ref{NP}.

\begin{lemma}
\label{NS} Let $\left( M,g,e^{-f}dv\right) $ be a smooth metric measure
space of dimension $n$ with $Ric_{f}\geq -\left( n-1\right) .$ Assume that
for any $x\in M,$ 
\begin{equation*}
\sup_{y\in B_{x}\left( 1\right) }\left\vert f\left( y\right) -f\left(
x\right) \right\vert \leq a.
\end{equation*}%
Then there exist constants $\nu >2$ and $C$ depending only on $n$ and $a$
such that 
\begin{equation*}
\left( \int_{B_{x}\left( 1\right) }\left\vert \varphi -\varphi _{B_{x}\left(
1\right) }\right\vert ^{\frac{2\nu }{\nu -2}}\right) ^{\frac{\nu -2}{\nu }%
}\leq \frac{C}{V\left( B_{x}\left( 1\right) \right) ^{\frac{2}{\nu }}}%
\int_{B_{x}\left( 1\right) }\left\vert \nabla \varphi \right\vert ^{2}\ 
\end{equation*}%
$\ $for\ any$\ \varphi \in C^{\infty }\left( B_{x}\left( 1\right) \right) ,$
where $\varphi _{B_{x}\left( 1\right) }:=V^{-1}\left( B_{x}\left( 1\right)
\right) \int_{B_{x}\left( 1\right) }\varphi .$
\end{lemma}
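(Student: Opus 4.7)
My plan is to combine two local ingredients -- a uniform volume doubling on balls of radius at most $1$ and the uniform Neumann-Poincar\'{e} inequality of Lemma \ref{NP} -- and then invoke the well-known principle (see Hajlasz-Koskela \cite{HK} and Buser \cite{Bu}) that volume doubling plus a $(1,2)$-Poincar\'{e} inequality at all small scales implies a Sobolev-Poincar\'{e} inequality of the stated form, with some exponent $\nu>2$ determined by the doubling dimension.

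First I would extract uniform small-scale doubling from the Jacobian comparison (\ref{a1}). Integrating that pointwise bound in $\xi \in S_{x}M$ and in the radial variable yields, for $0 < r_{1} \leq r_{2} \leq 1$,
$$V(B_{x}(r_{2})) \leq C_{1}(n,a) \left(\frac{r_{2}}{r_{1}}\right)^{n-1+2a} V(B_{x}(r_{1})),$$
so that in particular $V(B_{x}(2r)) \leq C_{2}(n,a)\,V(B_{x}(r))$ for all $r \leq 1/2$. Crucially, this doubling constant is independent of the center $x$. I then fix $\nu > 2$ larger than the resulting doubling exponent $n-1+2a$; the freedom to further enlarge $\nu$ if needed is harmless, since on the finite-measure ball $B_{x}(1)$ a larger $\nu$ only weakens the inequality via H\"{o}lder.

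Next I would combine this doubling with Lemma \ref{NP}, which furnishes the $L^{2}$-Poincar\'{e} inequality on every ball $B_{x}(r)$ with $r \leq 1$ and a constant depending only on $n$ and $a$. The standard chaining argument then produces the Sobolev-Poincar\'{e} bound: one writes $\varphi(y) - \varphi_{B_{x}(1)}$ as a telescoping sum of average differences along a sequence of dyadically shrinking balls joining $y$ to the center, applies the local Poincar\'{e} inequality on each link, and sums using doubling to obtain a pointwise Riesz-potential estimate for $|\varphi - \varphi_{B_{x}(1)}|$; the desired $L^{2\nu/(\nu-2)}$ bound then follows from a standard maximal-function or weak-type argument.

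The main obstacle is bookkeeping the constants so that the final Sobolev constant depends only on $n$ and $a$. This hinges on the uniformity in $x$ of both the doubling bound (immediate from (\ref{a1})) and the Poincar\'{e} constant in Lemma \ref{NP}. The latter uniformity is precisely why the stronger oscillation hypothesis on $f$ is imposed here rather than the weaker linear growth condition used in \cite{MW}: it is exactly this translation-invariant control that forces the Poincar\'{e} constant, and hence the Sobolev constant, to be center-independent.
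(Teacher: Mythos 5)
Your proposal is correct and matches the paper's approach: the paper derives the uniform small-scale Jacobian comparison (\ref{a1}), observes that this gives doubling independent of the center, and then invokes exactly the Buser--Hajlasz--Koskela machinery to deduce both the Poincar\'e and Sobolev inequalities with constants depending only on $n$ and $a$. Your additional remarks on the chaining argument and on why the oscillation hypothesis yields the needed $x$-independence are accurate and consistent with the paper's (terse) justification.
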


Although we have stated the Poincar\'{e} and Sobolev inequalities in Lemma %
\ref{NP} and Lemma \ref{NS} in terms of the volume form $dv,$ we point out
that the same statements hold true with respect to $e^{-f}dv$ as well with
possibly a different $C.$ This is because the oscillation of $f$ on $%
B_{x}\left(1\right) $ is assumed to be uniformly bounded.

We are now ready to prove Theorem \ref{Grad_Est}. Our argument is a mixture
of both the Bochner identity and the DeGiorgi-Nash-Moser theory (see e.g. 
\cite{Li1, SC1}).

\begin{proof}[Proof of Theorem \protect\ref{Grad_Est}]
Let $u$ be a positive solution to $\Delta _{f}u=0.$ Then the Bochner formula
asserts that 
\begin{equation*}
\frac{1}{2}\Delta _{f}\left\vert \nabla u\right\vert ^{2}=\left\vert
u_{ij}\right\vert ^{2}+\left\langle \nabla \Delta _{f}u,\nabla
u\right\rangle +Ric_{f}\left( \nabla u,\nabla u\right) .
\end{equation*}%
Using the curvature lower bound, we get 
\begin{equation*}
\Delta _{f}\left\vert \nabla u\right\vert ^{2}\geq -2\left( n-1\right)
\left\vert \nabla u\right\vert ^{2}.
\end{equation*}%
In view of (\ref{a1}), Lemma \ref{NP} and Lemma \ref{NS}, we may apply the
Moser iteration argument (see \cite{Li1, SC1}) to $\left\vert\nabla
u\right\vert ^{2}$ to conclude that 
\begin{equation}
\sup_{B_{x}\left( \frac{1}{16}\right) }\left\vert \nabla u\right\vert
^{2}\leq \frac{C}{V_{f}\left( B_{x}\left( \frac{1}{8}\right) \right) }%
\int_{B_{x}\left( \frac{1}{8}\right) }\left\vert \nabla u\right\vert
^{2}e^{-f}  \label{a2}
\end{equation}%
for any $x\in M,$ where $C$ depends only on $n$ and $a.$

Now let $\phi $ be a cut-off function with support in $B_{x}\left( \frac{1}{4%
}\right)$ such that $\phi =1$ on $B_{x}\left( \frac{1}{8}\right) $ and $%
\left\vert \nabla \phi \right\vert \leq 16.$ Then, using $\Delta _{f}u=0,$
we have 
\begin{eqnarray*}
\int_{M}\left\vert \nabla u\right\vert ^{2}\phi ^{2}e^{-f}
&=&-2\int_{M}u\phi \left\langle \nabla u,\nabla \phi \right\rangle e^{-f} \\
&\leq &\frac{1}{2}\int_{M}\left\vert \nabla u\right\vert ^{2}\phi
^{2}e^{-f}+2\int_{M}u^{2}\left\vert \nabla \phi \right\vert ^{2}e^{-f}.
\end{eqnarray*}%
Therefore, 
\begin{equation*}
\int_{M}\left\vert \nabla u\right\vert ^{2}\phi ^{2}e^{-f} \leq
4\int_{M}u^{2}\left\vert \nabla \phi \right\vert ^{2}e^{-f}.
\end{equation*}%
In view of (\ref{a1}), we conclude 
\begin{gather*}
\frac{1}{V_{f}\left( B_{x}\left( \frac{1}{8}\right) \right) }%
\int_{B_{x}\left( \frac{1}{8}\right) }\left\vert \nabla u\right\vert
^{2}e^{-f}\leq \frac{c}{V_{f}\left( B_{x}\left( \frac{1}{8}\right) \right) }%
\int_{B_{x}\left( \frac{1}{4}\right) }u^{2}e^{-f} \\
\leq c\,\frac{V_{f}\left( B_{x}\left( \frac{1}{4}\right) \right) }{%
V_{f}\left( B_{x}\left( \frac{1}{8}\right) \right) }\,(\sup_{B_{x}\left( 
\frac{1}{4}\right) }u)^{2}\leq C(\sup_{B_{x}\left( \frac{1}{4}\right)
}u)^{2}.
\end{gather*}%
Combining with (\ref{a2}), we obtain 
\begin{equation}
\left\vert \nabla u\right\vert \left( x\right) \leq C\sup_{B_{x}\left( \frac{%
1}{4}\right) }u.  \label{a3}
\end{equation}

On the other hand, using (\ref{a1}), Lemma \ref{NP} and Lemma \ref{NS}, and
applying the Moser iteration argument to the equation $\Delta _{f}u=0,$ we
arrived at the following Harnack type inequality 
\begin{equation*}
\sup_{B_{x}\left( \frac{1}{4}\right) }u\leq C\inf_{B_{x}\left( \frac{1}{4}%
\right) }u,
\end{equation*}%
where $C$ is a constant depending only on $n$ and $a$. So we may rewrite (%
\ref{a3}) into 
\begin{equation*}
\left\vert \nabla u\right\vert \left( x\right) \leq C\left( n,a\right)
u\left( x\right) ,
\end{equation*}%
which is what we wanted to prove.
\end{proof}

\section{Rigidity \label{Rigid}}

In this section, we focus on the equality case of the estimate of the bottom
spectrum in Theorem \ref{Estimate} and prove the following rigidity theorem.

\begin{theorem}
\label{Rigid_Ric_Neg} Let $\left( M,g,e^{-f}dv\right) $ be a complete smooth
metric measure space of dimension $n\geq 3$ with $Ric_{f}\geq -\left(
n-1\right).$ Assume that $\left\vert \nabla f\right\vert \leq a$ on $M$ for
some constant $a\geq 0.$ If $\lambda _{1}\left( \Delta _{f}\right) =\frac{1}{%
4}\left( n-1+a\right) ^{2},$ then either $M$ is connected at infinity or $f$
is constant and $M$ is a warped product $M=\mathbb{R}\times N$ with $%
ds_{M}^{2}=dt^{2}+h^{2}\left( t\right) ds_{N}^{2},$ where $N$ is compact.
The function $h\left( t\right) =e^{t}$ if $n\geq 4$ and $h\left( t\right)
=e^{t}$ or $h\left( t\right) =\cosh t$ if $n=3.$
\end{theorem}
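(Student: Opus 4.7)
The plan is to assume that $M$ has at least two ends and then derive both the rigid warped product splitting and the conclusion that $f$ is constant. Set $\alpha := (n-1+a)/2$, so the hypothesis becomes $\lambda_1(\Delta_f) = \alpha^2$. Because $\lambda_1(\Delta_f) > 0$, the operator $\Delta_f$ is non-parabolic, and the Li--Tam exhaustion procedure adapted to the weighted setting (using the two-ends assumption) produces a positive non-constant $f$-harmonic function $u$ on $M$. Combining the Laplace comparison (\ref{Laplace_comp}) with a barrier argument built from the distance function gives exponential decay $u(x) \le C e^{-\alpha \rho(x)}$ on at least one end, while the gradient estimate of Theorem \ref{Grad_Est} simultaneously provides the matching bound $|\nabla \log u| \le C(n,a)$.

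The analytic heart is an integrated Bochner--Kato inequality. Using $\Delta_f u = 0$ and $Ric_f \ge -(n-1)$, the weighted Bochner formula reads
\[
\tfrac{1}{2}\,\Delta_f |\nabla u|^2 \;\ge\; |\operatorname{Hess} u|^2 - (n-1)|\nabla u|^2 .
\]
A refined Kato inequality for $f$-harmonic functions, which holds because the trace relation $\Delta u = \langle \nabla f, \nabla u\rangle$ pins down the trace of $\operatorname{Hess} u$, yields $|\operatorname{Hess} u|^2 \ge \tfrac{n}{n-1}\bigl|\nabla|\nabla u|\bigr|^2$ up to cross terms involving $\nabla f$ that are controlled by $a$. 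Testing the Rayleigh quotient $\lambda_1(\Delta_f)\int \phi^2 e^{-f} \le \int |\nabla \phi|^2 e^{-f}$ against $\phi = \eta \cdot |\nabla u|^q$ for an appropriate exponent $q>0$ and a compactly supported cut-off $\eta$, and substituting the Bochner--Kato inequality after integration by parts, yields a chain of inequalities saturated precisely by the hypothesis $\lambda_1(\Delta_f) = \alpha^2$. The exponential decay of $u$ together with the volume upper bound in Lemma \ref{Vol_Ric_Neg} ensures that all boundary terms vanish as $\eta \uparrow 1$.

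Saturation forces the pointwise identities $|\nabla \log u| \equiv \alpha$ on $M$, $Ric_f(\nabla u,\nabla u) \equiv -(n-1)|\nabla u|^2$, and an umbilicity condition on the horizontal part of $\operatorname{Hess} u$. These are exactly the conditions under which the flow of $\nabla \log u/\alpha$ identifies $M$ isometrically with a warped product $\mathbb{R} \times_h N$, where $N$ is a compact level set of $u$. The Hessian equality reduces to the ODE $h''/h = 1$; completeness together with $h>0$ everywhere restricts $h \in \{e^t, \cosh t\}$. The assumption $|\nabla f| \le a$ combined with the saturation forces $\nabla f$ to be parallel to $\nabla u$ with proportionality constant along each slice, and then a short argument along an integral curve of $\nabla u$ forces $\nabla f \equiv 0$, so that $f$ is constant and hence $a=0$. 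A dimension analysis of the $N$-component of the Ricci tensor finally forces $h=e^t$ when $n\ge 4$ (since the induced Einstein condition on $N$ together with the Bochner equality is incompatible with the $\cosh$ warping), while for $n=3$ the fibre $N$ is one-dimensional and both $h=e^t$ and $h=\cosh t$ are admissible.

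The main obstacle I anticipate is the refined Kato step combined with the correct choice of the exponent $q$, so that the $f$-dependent terms produced by integration by parts combine cleanly with the $Ric_f$ contribution and the Bochner--Kato chain saturates exactly at $\lambda_1(\Delta_f)=\alpha^2$; this is where the hypothesis $|\nabla f|\le a$, as opposed to just linear growth of $f$, is used in an essential way. A secondary delicate point is ruling out $h=\cosh t$ in dimensions $n\ge 4$, which reduces to showing that the induced Einstein condition $Ric_N = -(n-2)\,g_N$ on a compact manifold of dimension $\ge 3$ is incompatible with the remaining equality cases from the Bochner saturation.
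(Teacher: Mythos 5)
Your proposal captures some of the right ingredients (Bochner formula, refined Kato inequality, Rayleigh quotient saturation, warped product identification), but it is missing the key structural idea of the paper's proof, and without it the argument has a genuine gap.

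The paper's proof splits into two fundamentally different cases based on the $f$-parabolicity of the ends. \textbf{Case 1:} $M$ has at least two $f$-nonparabolic ends. Here Li--Tam produces a bounded nonconstant $f$-harmonic $u$ with $\int_M |\nabla u|^2 e^{-f} < \infty$, and an improved Kato inequality with the very specific exponent $\alpha = \frac{n-2}{n-1} + \frac{\sqrt{n-2}\,a}{(n-1)^2}$ yields $\Delta_f|\nabla u|^\alpha \ge -(\sqrt{n-2}+\frac{a}{n-1})^2|\nabla u|^\alpha$, hence $\lambda_1(\Delta_f) \le (\sqrt{n-2}+\frac{a}{n-1})^2$. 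For $n\ge 4$ this \emph{strictly} contradicts $\lambda_1 = \frac{1}{4}(n-1+a)^2$, so two $f$-nonparabolic ends are simply impossible. Only for $n=3$ is there equality, and the equality analysis then forces $a=0$ and gives the Li--Wang $\cosh t$ splitting. \textbf{Case 2:} $M$ has exactly one $f$-nonparabolic end but more than one end in total; the other end $F$ is $f$-parabolic. In this case there is \emph{no} bounded nonconstant $f$-harmonic function with finite weighted Dirichlet energy, so the Rayleigh quotient argument you propose using $\phi = \eta|\nabla u|^q$ cannot be run. Instead the paper uses a Busemann function $\beta$ associated to a ray in $F$, proves $\Delta_f\beta \ge -(n-1+a)$ via Laplace comparison, tests the spectral gap against $B := e^{\frac{1}{2}(n-1+a)\beta}$ using weighted volume decay on $F$ and weighted volume growth on $E$, and then analyzes the resulting equality in Bochner to get the $e^t$ warped product, with $f$ constant coming from the $Ric^N \ge a e^{-2t}$ constraint on a compact $N$.

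Your proposal collapses both cases into a single argument with one $f$-harmonic function, so it cannot cover the parabolic-end case; your claimed exponential decay and Rayleigh quotient saturation simply do not apply there. Relatedly, your explanation for why $h=\cosh t$ is excluded when $n\ge 4$ (an Einstein incompatibility for $Ric_N = -(n-2)g_N$ on a compact manifold of dimension $\ge 3$) is not correct --- compact negatively Einstein manifolds exist --- and it is not the reason in the paper. The paper excludes the $\cosh t$ option for $n\ge 4$ because the improved Kato inequality makes two $f$-nonparabolic ends impossible in that dimension range; the $\cosh$ splitting can only arise in the two-$f$-nonparabolic-ends case, which only survives when $n=3$ (where the fibre $N$ is one-dimensional). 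To repair your argument you would need to introduce the $f$-parabolic/nonparabolic dichotomy, run the improved Kato argument with the correct exponent in the nonparabolic case, and carry out the Busemann function/volume-decay argument in the parabolic case.
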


\begin{proof}[Proof of Theorem \protect\ref{Rigid_Ric_Neg}]
Assume that $M$ has at least two ends. We will divide our proof into two
cases according to the ends being $f-$nonparabolic or $f-$parabolic. Recall
that a manifold is called $f$-nonparabolic if $\Delta _{f}$ admits a
positive symmetric Green's function. Otherwise, it is called $f$-parabolic.
For an end of the manifold, the same definition applies, where now the
Green's function refers to the one satisfying the Neumann boundary
conditions.

We first deal with the case that there are at least two $f$-nonparabolic
ends. Then, according to a result in \cite{LT}, there is a bounded
nonconstant $f$-harmonic function $u$ on $M$ such that $\int_{M}\left\vert
\nabla u\right\vert ^{2}e^{-f}<\infty .$

By the Bochner formula, we have%
\begin{equation*}
\Delta _{f}|\nabla u|^{2}=2\left\vert u_{ij}\right\vert ^{2}+2Ric_{f}(\nabla
u,\nabla u).
\end{equation*}%
For each $x\in M,$ we may choose a local orthonormal frame $%
\{e_1,e_2,\cdots,e_n\}$ such that at $x$ we have $u_{1}=\left\vert \nabla
u\right\vert $ and $u_{i}=0$ if $i>1.$ Now a standard manipulation implies%
\begin{equation*}
\left\vert u_{ij}\right\vert ^{2}\geq \left\vert u_{11}\right\vert
^{2}+2\sum_{j=2}^{n}\left\vert u_{1j}\right\vert ^{2}+\frac{\left\vert
\Delta u-u_{11}\right\vert ^{2}}{n-1}.
\end{equation*}%
Since $\Delta _{f}u=0,$ we have%
\begin{eqnarray}
\left\vert \Delta u-u_{11}\right\vert ^{2} &=&\left\vert \left\langle \nabla
u,\nabla f\right\rangle -u_{11}\right\vert ^{2}\geq \left\vert
u_{11}\right\vert ^{2}-2\left\langle \nabla u,\nabla f\right\rangle
\left\vert u_{11}\right\vert  \label{z-1} \\
&\geq &\left\vert u_{11}\right\vert ^{2}-2a\left\vert \nabla u\right\vert
\left\vert u_{11}\right\vert  \notag
\end{eqnarray}%
as $\left\vert \nabla f\right\vert \leq a.$ Thus,%
\begin{equation*}
\left\vert u_{ij}\right\vert ^{2}\geq \frac{n}{n-1}\sum_{j=1}^{n}\left\vert
u_{1j}\right\vert ^{2}-\frac{2a|u_{11}||\nabla u|}{n-1}.
\end{equation*}%
Since the orthonormal frame is chosen that $e_{1}$ is in the direction of $%
\nabla u,$ it is easy to see%
\begin{equation*}
|\nabla |\nabla u||^{2}=\sum_{j=1}^{n}\left\vert u_{1j}\right\vert ^{2}
\end{equation*}%
and 
\begin{equation*}
|u_{11}|\leq |\nabla |\nabla u||.
\end{equation*}%
Therefore,%
\begin{equation*}
\left\vert u_{ij}\right\vert ^{2}\geq \frac{n}{n-1}|\nabla |\nabla u||^{2}-%
\frac{2a}{n-1}|\nabla |\nabla u|||\nabla u|.
\end{equation*}%
Together with the lower bound assumption on $Ric_{f},$ we obtain%
\begin{equation*}
|\nabla u|\Delta _{f}|\nabla u|\geq \frac{1}{n-1}|\nabla |\nabla u||^{2}-%
\frac{2a|\nabla |\nabla u|||\nabla u|}{n-1}-(n-1)|\nabla u|^{2}.
\end{equation*}%
Now let 
\begin{equation*}
\alpha :=\frac{n-2}{n-1}+\frac{\sqrt{n-2}\,a}{(n-1)^{2}}.
\end{equation*}%
Using the elementary inequality%
\begin{equation*}
2|\nabla |\nabla u|||\nabla u|\leq \frac{\sqrt{n-2}}{n-1}|\nabla |\nabla
u||^{2}+\frac{n-1}{\sqrt{n-2}}|\nabla u|^{2},
\end{equation*}%
we can rewrite the preceding inequality into%
\begin{equation}
\Delta _{f}|\nabla u|^{\alpha }\geq -\left( \sqrt{n-2}+\frac{a}{n-1}\right)
^{2}\,|\nabla u|^{\alpha }.  \label{z0}
\end{equation}

Now the argument in \cite{LW0} implies that $\lambda _{1}\left( \Delta
_{f}\right) \leq \left( \sqrt{n-2}+\frac{a}{n-1}\right) ^{2}.$ Moreover,
that the equality holds forces (\ref{z0}) into an equality also. In the case
of $n\ge 4,$ this contradicts with the assumption that $\lambda _{1}\left(
\Delta _{f}\right) =\frac{1}{4}\left( n-1+a\right) ^{2}.$ In the case $n=3,$
this indeed becomes an equality. So (\ref{z0}) and all the inequalities used
to prove (\ref{z0}) are equalities. In particular, from (\ref{z-1}) we
conclude $\left\langle \nabla f,\nabla u\right\rangle =0$ and $a\left\vert
\nabla u\right\vert \left\vert u_{11}\right\vert =0.$ Observe that we also
have $\left\vert u_{11}\right\vert =\left\vert \nabla \left\vert \nabla
u\right\vert \right\vert .$ Now if $a\neq 0,$ then we conclude $\left\vert
\nabla \left\vert \nabla u\right\vert \right\vert =0$ and $\left\vert \nabla
u\right\vert =C$ on $M.$ But this contradicts with $\int_{M}\left\vert
\nabla u\right\vert ^{2}e^{-f}<\infty $ as $\int_{M}e^{-f}=\infty$ by the
fact that $M$ is $f-$nonparabolic. Therefore, $a=0$ and $f$ is constant. So
we are back to the standard Laplacian case. By \cite{LW0}, $M=\mathbb{R}%
\times N$ with $ds_{M}^{2}=dt^{2}+\cosh ^{2}\left( t\right) ds_{N}^{2},$
where $N$ is compact.

We now focus on the case when the manifold has exactly one $f$-nonparabolic
end $E.$ If $M$ admits more than one end, then the end $F:=M\backslash E$
must be $f$-parabolic. Using the fact that $\lambda _{1}\left( \Delta
_{f}\right) =\frac{1}{4}\left( n-1+a\right) ^{2}$ and arguing as in \cite{LW}%
, we obtain 
\begin{equation}
V_{f}\left( F\backslash B_{p}\left( R\right) \right) \leq Ce^{-\left(
n-1+a\right) R}.  \label{z3}
\end{equation}%
Consider a ray $\gamma $ contained in the end $F$ and define the associated
Busemann function 
\begin{equation*}
\beta \left( x\right) :=\lim_{t\rightarrow \infty }\left( t-d\left( x,\gamma
\left( t\right) \right) \right) .
\end{equation*}%
Then, on the end $F,$ we have $\beta \left( x\right) \leq r\left( x\right)
+c,$ and on the end $E,$ $-r\left( x\right) -c\leq \beta \left( x\right)
\leq -r\left( x\right) +c$ by \cite{LW1}.

Denote by $\tau _{t}\left( s\right) $ the minimizing geodesic from $\gamma
\left( t\right) $ to $x$ that is parametrized by the arc length. According
to the Laplace comparison theorem in \cite{W}, we have%
\begin{equation*}
\Delta _{f}\left( d\left( x,\gamma \left( t\right) \right) \right) \leq
\left( n-1\right) \coth r-\frac{1}{\sinh ^{2}\left( r\right) }%
\int_{0}^{r}f^{\prime }\left( s\right) \sinh \left( 2s\right) ds,
\end{equation*}%
where $r:=d\left( x,\gamma \left( t\right) \right) $ and $f\left( s\right)
:=f\left( \tau _{t}\left( s\right) \right) .$ Since $\left\vert \nabla
f\right\vert \leq a,$ it is straightforward to see 
\begin{equation*}
\Delta _{f}\left( d\left( x,\gamma \left( t\right) \right) \right) \leq
\left( n-1\right) \coth r+a.
\end{equation*}
By the definition of the Busemann function, it is now standard to verify
that the following estimate holds in the sense of distributions.%
\begin{equation}
\Delta _{f}\beta \left( x\right) \geq -\left( n-1+a\right) .  \label{z4}
\end{equation}

Note by the Laplacian comparison theorem that $\Delta _{f}r\leq \left(
n-1\right) \coth r+a,$ we have 
\begin{equation}
V_{f}\left( B_{p}\left( R\right) \cap E\right) \leq Ce^{\left( n-1+a\right)
R}  \label{z5}
\end{equation}
for all $R>0.$

Consider the function 
\begin{equation*}
B:=e^{\frac{1}{2}\left( n-1+a\right) \beta }.
\end{equation*}%
Using (\ref{z4}) and the fact that $\left\vert \nabla \beta \right\vert =1,$
we conclude 
\begin{equation}
\Delta _{f}B\geq -\frac{1}{4}\left( n-1+a\right) ^{2}B.  \label{z6}
\end{equation}

Let $\phi$ be a cut-off function with support in $B_{p}\left( 2R\right) $
such that $\phi =1$ on $B_{p}\left( R\right) $ and $\left\vert \nabla \phi
\right\vert \leq \frac{C}{R}.$ Then, 
\begin{gather*}
\frac{1}{4}\left( n-1+a\right) ^{2}\int_{M}\left( B\phi \right)
^{2}e^{-f}+\int_{M}B\left( \Delta _{f}B\right) \phi ^{2}e^{-f} \\
\leq \int_{M}\left\vert \nabla \left( B\phi \right) \right\vert
^{2}e^{-f}+\int_{M}B\left( \Delta _{f}B\right) \phi ^{2}e^{-f} \\
=\int_{M}\left\vert \nabla \phi \right\vert ^{2}B^{2}e^{-f} \\
\leq \frac{C}{R},
\end{gather*}%
where we have used (\ref{z3}) and (\ref{z5}) in the last inequality. Letting 
$R$ go to infinity and taking into account of (\ref{z6}), we conclude $%
\Delta _{f}B=-\frac{1}{4}\left( n-1+a\right) ^{2}B.$ Equivalently, 
\begin{equation*}
\Delta _{f}\beta =-\left( n-1+a\right) \ \ \text{and\ }\ \left\vert \nabla
\beta \right\vert =1
\end{equation*}%
everywhere on $M.$ So the Bochner formula implies that 
\begin{eqnarray}
0 &=&\frac{1}{2}\Delta _{f}\left\vert \nabla \beta \right\vert
^{2}=\left\vert \beta _{ij}\right\vert ^{2}+\left\langle \nabla \Delta
_{f}\beta ,\nabla \beta \right\rangle +Ric_{f}\left( \nabla \beta ,\nabla
\beta \right)  \label{z7} \\
&\geq &\left\vert \beta _{ij}\right\vert ^{2}-\left( n-1\right) .  \notag
\end{eqnarray}%
On the other hand, under an orthonormal frame $\{e_1,e_2,\cdots,e_n\}$ so
that $\beta _{1}=\left\vert \nabla \beta \right\vert =1$ and $\beta _{i}=0$
for $i>1,$ one has $\beta _{11}=0.$ In particular, 
\begin{equation}
\left\vert \beta _{ij}\right\vert ^{2}\geq \frac{1}{n-1}\left( \Delta \beta
\right) ^{2}=\frac{1}{n-1}\left( n-1+a-\left\langle \nabla f,\nabla \beta
\right\rangle \right) ^{2}.  \label{z8}
\end{equation}%
Since $\left\vert \left\langle \nabla f,\nabla \beta \right\rangle
\right\vert \leq a,$ clearly $\left\vert \beta _{ij}\right\vert ^{2}\geq
n-1. $ In conclusion, both (\ref{z7}) and (\ref{z8}) must be equalities.
Reading from the equality case of (\ref{z8}), we assert that $\left( \beta
_{ij}\right) $ is a diagonal matrix. Moreover, $\left\langle \nabla f,\nabla
\beta \right\rangle =a,$ which implies $\nabla f=a\nabla \beta $ or $%
f=a\beta $ up to a constant. Using again that $\Delta _{f}\beta =-\left(
n-1+a\right) $ and $\left\langle \nabla f,\nabla \beta \right\rangle =a,$ we
deduce that $\Delta \beta =-\left( n-1\right) .$ Hence, the diagonal entries
of $\left( \beta _{ij}\right) $ are given by $\beta _{11}=0$ and $\beta
_{ii}=-1$ for $i\geq 2.$ This information on $\left( \beta _{ij}\right) $
together with the fact that $\left\vert \nabla \beta \right\vert =1$ leads
to the splitting of $M$ as a warped product $\mathbb{R}\times N$ with $%
ds_{M}^{2}=dt^{2}+e^{-2t}ds_{N}^{2}.$ The manifold $N$ is given by the level
set of the Busemann function $\beta ^{-1}\left( 0\right) =\left\{ x:\beta
\left( x\right) =0\right\}.$ The splitting line is given by the integral
curves of $\nabla \beta .$ The manifold $N$ is necessarily compact due to
the fact that $M$ is assumed to have (at least) two ends. For more details,
see \cite{LW}.

Now we show that in fact $f$ has to be constant in this case. Indeed,
according to a standard computation of the curvature of a warped product
metric, we have $Ric_{ij}=Ric_{ij}^{N}-\left( n-1\right) g_{ij}$ for $%
i,j\geq 2,$ where $Ric^{N}$ is the Ricci curvature of $N.$ The condition
that $Ric_{f}\geq -\left( n-1\right) $ is then equivalent to $Ric^{N}\geq
ae^{-2t}$ on $N.$ Since $a\geq 0$ and $t\in \mathbb{R}$ is arbitrary, this
is impossible due to the compactness of $N$ unless $a=0.$ This proves the
Theorem.
\end{proof}

\section{Ends of expanding Ricci solitons \label{Expanding}}

In this section, we investigate the issue of whether an expanding gradient
Ricci soliton is necessarily connected at infinity. Recall that an expanding
gradient Ricci soliton is a Riemannian manifold $\left(M,g\right) $ such
that $Ric+Hess\left( f\right) =-\frac{1}{2}g$ for some function $f.$ Such $f$
is called the potential function of the soliton.

We begin by collecting some basic properties of expanding gradient Ricci
solitons. First, it is known that 
\begin{equation}
S+\left\vert \nabla f\right\vert ^{2}=-f  \label{e1}
\end{equation}%
after adding a suitable constant to $f,$ where $S$ denotes the scalar
curvature of $M.$ Also, taking trace of the soliton equation, we obtain 
\begin{equation}
\Delta f+S=-\frac{n}{2}.  \label{e2}
\end{equation}%
On the other hand, by the maximum principle, it was proved in \cite{PRS, Z}
that 
\begin{equation}
S\geq -\frac{n}{2}.  \label{e3}
\end{equation}%
Moreover, if $S=-\frac{n}{2}$ at some point, then the manifold must be
Einstein and the potential function $f$ is constant. Such soliton is called
a trivial one.

Some elementary examples of expanding gradient Ricci solitons include $M=%
\mathbb{R}^{k}\times N^{n-k},$ where $N^{n-k}$ is an Einstein manifold with $%
Ric_{N}=-\frac{1}{2}g_{N}$ and $\mathbb{R}^{k}$ the Gaussian expanding Ricci
soliton with potential function $f=-\frac{1}{4}\left\vert x\right\vert ^{2}.$

From (\ref{e1}) and (\ref{e3}), it is easy to see that $\left( -f\right) $
grows at most quadratically. Indeed, 
\begin{equation*}
\left( -f\right) \left( x\right) \leq \frac{1}{4}r^{2}\left( x\right)
+cr\left( x\right) .
\end{equation*}

In view of this upper bound, it is natural to look for a matching lower
bound for $-f,$ which has been achieved in the case of shrinking gradient
Ricci solitons \cite{CZ}. However, in contrast to shrinking gradient Ricci
solitons, in general such a pointwise lower bound is not to be expected for
expanding gradient Ricci solitons. Indeed, for the preceding examples of the
form $M=\mathbb{R}^{n-k}\times N^{k},$ the potential function is given by $%
f(x,y)=-\frac{1}{4}\left\vert x\right\vert ^{2}$ for $x\in \mathbb{R}^{n-k}$
and $y\in N.$ If we take $N^{k}$ to be the simply connected hyperbolic space
of Ricci curvature $-\frac{1}{2},$ then $N$ is noncompact and the potential
function $f$ does not satisfy the desired bounds. Nonetheless, we have the
following estimate concerning the potential function $f.$ The result in
particular implies that an expanding gradient Ricci soliton must be trivial
if its potential function is of subquadratic growth. Prior to our result, it
was known from \cite{PRS} that if $\left\vert \nabla f\right\vert $ is
bounded on $M,$ then $M$ is Einstein.

\begin{theorem}
\label{Potential} Let $\left( M,g,f\right) $ be a nontrivial complete
expanding gradient Ricci soliton. Then there exists constant $C$ such that 
\begin{equation*}
\frac{1}{4}\,r^{2}-C\,r^{\frac{3}{2}}\sqrt{\,\ln r}\leq \sup_{\partial
B_{p}\left( r\right) }(-f)\left( x\right) \leq \frac{1}{4}r^{2}+Cr
\end{equation*}%
for all $r>2.$
\end{theorem}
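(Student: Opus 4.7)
The upper bound is essentially immediate from the discussion preceding the theorem: since $|\nabla f|^2=-f-S$ and $S\ge -n/2$, the function $h:=\sqrt{-f+n/2}$ (well defined because $-f\ge S\ge -n/2$) satisfies
\begin{equation*}
|\nabla h|^2=\frac{1}{4}-\frac{S+n/2}{4h^2}\le \frac{1}{4}.
\end{equation*}
Integrating along a minimizing geodesic from $p$ to $x$ gives $h(x)\le h(p)+r(x)/2$, and squaring yields $-f(x)\le r(x)^2/4+Cr(x)$. The substance of the theorem is therefore the lower bound $\sup_{\partial B_p(r)}(-f)\ge r^2/4-Cr^{3/2}\sqrt{\ln r}$, on which I will concentrate.

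To attack the lower bound, I would first observe that the nontriviality assumption together with \cite{PRS} forces $|\nabla f|$, hence $-f$, to be unbounded on $M$; thus $\phi(r):=\sup_{B_p(r)}h$ tends to infinity, is $\tfrac{1}{2}$-Lipschitz, and is bounded above by $h(p)+r/2$. Note also that $\Delta_f(h^2)=h^2$ (a direct consequence of $\Delta f=-n/2-S$ and $|\nabla f|^2=-f-S$), so $h^2$ is $f$-subharmonic and $\phi(r)$ is realized on $\partial B_p(r)$ for large $r$. The goal reduces to showing $\phi(r)\ge r/2-C\sqrt{r\ln r}$. The key structural fact is that the pointwise deficit $\tfrac14-|\nabla h|^2=(S+n/2)/(4h^2)$ is nonnegative, and its integral along a curve from $p$ out to a $\phi$-maximizer is precisely the gap between $r/2$ and the observed growth of $h$. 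My plan is to bound this integrated deficit by (i) using the soliton identity $\Delta_f f=f-n/2$ paired with cutoff integration on $B_p(r)$ to express $\int_{B_p(r)}(S+n/2)\,e^{-f}$ as a boundary flux, and (ii) invoking the weighted volume and Laplace comparison consequences of $Ric_f=-g/2$ from \cite{W} to control the remaining terms. A Cauchy--Schwarz passage from weighted to unweighted quantities then introduces the logarithm of the weight ratio $e^{-f}\sim e^{r^2/4}$, which is what produces the factor $\sqrt{\ln r}$ in the error.

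The principal obstacle is the absence of any pointwise upper bound on the scalar curvature for expanding solitons --- only $S\ge -n/2$ is known. This blocks the pointwise ODE strategy that Cao--Zhou used for shrinkers and forces all estimates to proceed through integration. The logarithmic correction $\sqrt{\ln r}$ is precisely the residue of that averaging, so I expect the main technical difficulty to lie in optimizing the Cauchy--Schwarz step to match exactly the error $r^{3/2}\sqrt{\ln r}$ stated in the theorem, and in justifying convergence of the weighted integrals in spite of the exponential blow-up of $e^{-f}$.
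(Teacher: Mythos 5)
The upper bound and the observation that $h^2=-f+\tfrac n2$ is $f$-subharmonic with $\Delta_f(h^2)=h^2$ are correct and coincide with the paper's starting point (the paper writes $u:=\tfrac n2-f$). However, the route you sketch for the lower bound is not the paper's, and it has a genuine gap: your proposed mechanism for producing the $\sqrt{\ln r}$ error does not work. You attribute the logarithm to "the logarithm of the weight ratio $e^{-f}\sim e^{r^2/4}$," but $\log e^{-f}\sim r^2/4$, which would give a quadratic error, not a logarithmic one; in the actual proof the logarithm has an entirely different origin. Moreover, your step (i) asks to integrate $\int_{B_p(r)}(S+\tfrac n2)e^{-f}$ against the rapidly growing weight $e^{-f}$ with no a priori control on how large $S$ can be pointwise --- this is exactly the convergence worry you flag yourself, and nothing in the plan resolves it.

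What the paper actually does is introduce a one-parameter family of test quantities. Setting $u=\tfrac n2-f$, one computes $\Delta e^{2k\sqrt u}$ using $\Delta u=\tfrac n2+S$ and $|\nabla u|^2=u-(\tfrac n2+S)$, multiplies by $u^{k^2}$, and integrates by parts on $B_p(r)$. A pointwise algebraic inequality (checked directly via a discriminant computation) then yields a first-order differential inequality
\begin{equation*}
k\,w(r)\le w'(r),\qquad w(r):=\int_{B_p(r)}|\nabla u|\,u^{k^2-\frac12}e^{2k\sqrt u},
\end{equation*}
valid for any $k\ge 1$. Nontriviality gives $w(r_0)>0$, hence $w(r)\ge Ce^{kr}$ and therefore $\int_{\partial B_p(r)}|\nabla u|\,u^{k^2-\frac12}e^{2k\sqrt u}\ge Ce^{kr}$. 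One then bounds the unweighted area $A(\partial B_p(r))\le Ce^{c(n)r}$ via the Ricatti-type argument of Lemma~\ref{Vol_Ric_Neg}, and uses the pointwise bound $|\nabla u|^2\le u\le \tfrac14 r^2+cr$. This yields
\begin{equation*}
\sup_{\partial B_p(r)}2\sqrt u\ge r-2k\ln r-\frac{c(n)\,r}{k},
\end{equation*}
and the logarithm here comes from taking $\log$ of the polynomial factor $u^{k^2}\lesssim r^{2k^2}$, not from the weight. Optimizing over the free parameter $k$ (choose $k=\sqrt{r/\ln r}$) balances the two error terms and produces exactly the stated $r^{3/2}\sqrt{\ln r}$ deficit. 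Your proposal contains no tunable parameter and no mechanism that could replace this balancing step, so as written it cannot recover the theorem's error rate; the plan would need to be substantially reorganized around such a one-parameter family and an area-growth bound before it could close.
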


\begin{proof}[Proof of Theorem \protect\ref{Potential}]
As indicated above, we have 
\begin{eqnarray}
S+\Delta f &=&-\frac{n}{2}  \label{p1} \\
\left\vert \nabla f\right\vert ^{2}+S &=&-f  \notag \\
S &>&-\frac{n}{2}.  \notag
\end{eqnarray}%
Now the upper bound readily follows from (\ref{p1}). So we only need to
prove the lower bound.

Let us denote 
\begin{equation*}
u:=\frac{n}{2}-f=\frac{n}{2}+S+\left\vert \nabla f\right\vert ^{2}>0.
\end{equation*}%
For $k>0,$ we compute 
\begin{equation}
\Delta e^{2k\sqrt{u}}=\left( \frac{k}{\sqrt{u}}\Delta u+\left( \frac{k^{2}}{u%
}-\frac{k}{2u\sqrt{u}}\right) \left\vert \nabla u\right\vert ^{2}\right)
e^{2k\sqrt{u}}.  \label{p2}
\end{equation}%
From (\ref{p1}), we have 
\begin{eqnarray}
\Delta u &=&\left( \frac{n}{2}+S\right) \ \ \ \text{and}  \label{p3} \\
\left\vert \nabla u\right\vert ^{2} &=&u-\left( \frac{n}{2}+S\right) . 
\notag
\end{eqnarray}%
It follows that%
\begin{equation}
\Delta e^{2k\sqrt{u}}=k\left\{ k-\frac{1}{2\sqrt{u}}+\left( \frac{n}{2}%
+S\right) \left( \frac{1}{\sqrt{u}}+\frac{1}{2u\sqrt{u}}-\frac{k}{u}\right)
\right\} e^{2k\sqrt{u}}.  \label{p4}
\end{equation}%
Multiplying (\ref{p4}) by $u^{k^{2}}$ and integrating by parts on $%
B_{p}\left( r\right) ,$ we have 
\begin{gather*}
\int_{B_{p}\left( r\right) }u^{k^{2}}\left( \Delta e^{2k\sqrt{u}}\right)
=-\int_{B_{p}\left( r\right) }\left\langle \nabla e^{2k\sqrt{u}},\nabla
u^{k^{2}}\right\rangle +\int_{\partial B_{p}\left( r\right) }u^{k^{2}}\frac{%
\partial }{\partial r}\left( e^{2k\sqrt{u}}\right) \\
\leq -k^{3}\int_{B_{p}\left( r\right) }\left\vert \nabla u\right\vert
^{2}u^{k^{2}-\frac{3}{2}}e^{2k\sqrt{u}}+k\int_{\partial B_{p}\left( r\right)
}\frac{1}{\sqrt{u}}u^{k^{2}}\left\vert \nabla u\right\vert e^{2k\sqrt{u}} \\
=-k^{3}\int_{B_{p}\left( r\right) }\frac{1}{\sqrt{u}}u^{k^{2}}e^{2k\sqrt{u}%
}+k^{3}\int_{B_{p}\left( r\right) }\left( \frac{n}{2}+S\right) \frac{1}{u%
\sqrt{u}}u^{k^{2}}e^{2k\sqrt{u}} \\
+k\int_{\partial B_{p}\left( r\right) }\frac{1}{\sqrt{u}}u^{k^{2}}\left\vert
\nabla u\right\vert e^{2k\sqrt{u}},
\end{gather*}%
where in the last line we have used (\ref{p3}). Consequently, from (\ref{p4}%
) it follows%
\begin{gather}
\int_{\partial B_{p}\left( r\right) }\frac{\left\vert \nabla u\right\vert }{%
\sqrt{u}}u^{k^{2}}e^{2k\sqrt{u}}  \label{p5} \\
\geq \int_{B_{p}\left( r\right) }\left\{ k+\frac{1}{\sqrt{u}}\left( k^{2}-%
\frac{1}{2}\right) +\left( \frac{n}{2}+S\right) \left( \frac{1}{\sqrt{u}}-%
\frac{1}{u\sqrt{u}}\left( k^{2}-\frac{1}{2}\right) -\frac{k}{u}\right)
\right\} u^{k^{2}}e^{2k\sqrt{u}}.  \notag
\end{gather}%
We now claim that 
\begin{equation}
k+\frac{1}{\sqrt{u}}\left( k^{2}-\frac{1}{2}\right) +\left( \frac{n}{2}%
+S\right) \left( \frac{1}{\sqrt{u}}-\frac{1}{u\sqrt{u}}\left( k^{2}-\frac{1}{%
2}\right) -\frac{k}{u}\right) \geq k\frac{\left\vert \nabla u\right\vert }{%
\sqrt{u}}.  \label{p6}
\end{equation}

We prove this directly by checking it at arbitrary point $x\in M.$ Let us
denote for simplicity 
\begin{equation*}
\alpha :=\frac{n}{2}+S\left( x\right) ,
\end{equation*}%
and let 
\begin{equation*}
\gamma :=\sqrt{\frac{u\left( x\right) }{\alpha }}\geq 1.
\end{equation*}%
The fact that $\gamma \geq 1$ follows from (\ref{p3}). Notice that (\ref{p6}%
) is equivalent to%
\begin{equation*}
k\sqrt{u}+\left( k^{2}-\frac{1}{2}\right) +\alpha \left( 1-\frac{1}{u}\left(
k^{2}-\frac{1}{2}\right) -\frac{k}{\sqrt{u}}\right) \geq k\sqrt{u-\alpha }.
\end{equation*}%
This inequality is rewritten into the following equivalent form after
replacing $u$ in terms of $\gamma $ and rearranging the terms. 
\begin{equation}
\alpha -k\sqrt{\alpha }\left( \frac{1}{\gamma }-\gamma +\sqrt{\gamma ^{2}-1}%
\right) +\left( k^{2}-\frac{1}{2}\right) \left( 1-\frac{1}{\gamma ^{2}}%
\right) \geq 0.  \label{p7}
\end{equation}%
The discriminant of this quadratic inequality in $\sqrt{\alpha }$ is given
by 
\begin{eqnarray*}
D &:&=k^{2}\left( \frac{1}{\gamma }-\gamma +\sqrt{\gamma ^{2}-1}\right)
^{2}-4\left( k^{2}-\frac{1}{2}\right) \left( 1-\frac{1}{\gamma ^{2}}\right)
\\
&=&\left( 1-\frac{1}{\gamma ^{2}}\right) \left\{ k^{2}\left( \gamma -\sqrt{%
\gamma ^{2}-1}\right) ^{2}-4\left( k^{2}-\frac{1}{2}\right) \right\} .
\end{eqnarray*}%
Since 
\begin{equation*}
0\leq \gamma -\sqrt{\gamma ^{2}-1}\leq 1\ \ \ \text{and \ }\gamma \geq 1,
\end{equation*}%
it follows that for $k\geq 1,$ 
\begin{equation*}
D \leq \left( 1-\frac{1}{\gamma ^{2}}\right) \left\{ -3k^{2}+2\right\} \leq
0.
\end{equation*}%
This proves that (\ref{p7}) is true for any $\gamma \geq 1$ and for any $%
\alpha \geq 0.$ Therefore, (\ref{p6}) holds true at any $x\in M.$

From (\ref{p5}) and (\ref{p6}) we get that 
\begin{equation*}
k\,\int_{B_{p}\left( r\right) }\left\vert \nabla u\right\vert u^{k^{2}-\frac{%
1}{2}}e^{2k\sqrt{u}}\leq \int_{\partial B_{p}\left( r\right) }\left\vert
\nabla u\right\vert u^{k^{2}-\frac{1}{2}}e^{2k\sqrt{u}}.
\end{equation*}%
Hence the function 
\begin{equation*}
w\left( r\right) :=\int_{B_{p}\left( r\right) }\left\vert \nabla
u\right\vert u^{k^{2}-\frac{1}{2}}e^{2k\sqrt{u}}
\end{equation*}%
satisfies $kw\left( r\right) \leq w^{\prime }\left( r\right) $ for any $%
r\geq 0.$ Since $(M,g,f)$ is assumed to be a nontrivial Ricci soliton, there
exists a positive radius $\ r_{0}$ for which $w\left( r_{0}\right) >0.$
Integrating $kw\left( t\right) \leq w^{\prime }\left( t\right) $ from $%
t=r_{0}$ to $t=r,$ we conclude that there exists a positive constant $C>0$
such that $w\left( r\right) \geq C\,e^{kr}.$ Therefore, we have proved that 
\begin{equation}
\int_{\partial B_{p}\left( r\right) }\left\vert \nabla u\right\vert u^{k^{2}-%
\frac{1}{2}}e^{2k\sqrt{u}}\geq Ce^{kr}\ \ \ \text{for \ }r\geq r_{0}.
\label{p8}
\end{equation}%
We now prove that there exists a constant $c\left( n\right) $ depending only
on $n$ such that 
\begin{equation}
A\left( \partial B_{p}\left( r\right) \right) \leq Ce^{c\left( n\right) r}\
\ \ \ \text{for\ \ \ }r\geq r_{0}.  \label{p9}
\end{equation}%
Let us stress that (\ref{p9}) refers to the usual area, not the weighted
one. This claim follows as in Lemma 2.1. Indeed, from Lemma 2.1 we have%
\begin{equation*}
\frac{J^{\prime }}{J}\left( r\right) +\frac{1}{n-1}\int_{1}^{r}\left( \frac{%
J^{\prime }}{J}\right) ^{2}\left( t\right) dt\leq f^{\prime }\left( r\right)
+\frac{1}{2}r+C_{0}.
\end{equation*}%
Since 
\begin{equation*}
\sup_{B_{p}\left( r\right) }\left\vert \nabla f\right\vert \leq \frac{1}{2}%
r+c,
\end{equation*}%
it follows that 
\begin{equation*}
\frac{J^{\prime }}{J}\left( r\right) +\frac{1}{\left( n-1\right) r}\left(
\int_{1}^{r}\frac{J^{\prime }}{J}\left( t\right) dt\right) ^{2}\leq r+C_{0}.
\end{equation*}%
The argument in Lemma 2.1 now shows that 
\begin{equation*}
A\left( \partial B_{p}\left( r\right) \right) \leq Ce^{2\sqrt{n-1}r}.
\end{equation*}%
This proves (\ref{p9}) is true. Plugging this into (\ref{p8}) and using that 
\begin{equation*}
\left\vert \nabla u\right\vert ^{2}\leq u\leq \frac{1}{4}r^{2}+cr,
\end{equation*}%
we have 
\begin{equation*}
\sup_{\partial B_{p}\left( r\right) }e^{2k\sqrt{u}}\geq Ce^{kr-2k^{2}\ln
r-c\left( n\right) r}.
\end{equation*}%
In other words, 
\begin{equation*}
\sup_{\partial B_{p}\left( r\right) }2\sqrt{u}\geq r-2k\ln r-c\,\frac{r}{k}.
\end{equation*}%
Since this estimate is true for each fixed $r$ over all $k,$ we may optimize
by choosing $k=\sqrt{\frac{r}{\ln r}}.$ It is easy to see that this proves
the Theorem.
\end{proof}

Theorem \ref{Potential} shows that the results we proved in Section \ref%
{Rigid} cannot be applied directly to expanding gradient Ricci solitons as
the boundedness assumption on $\left\vert \nabla f\right\vert $ is not
available. To address the issue of connectedness at infinity, we have to
proceed somewhat differently to obtain a $\lambda _{1}$ estimate. Also, our
proof to rule out the existence of small ends seems to rely on some specific
properties of expanding gradient Ricci solitons.

\begin{theorem}
\label{Connected}Let $\left( M,g,f\right) $ be a complete gradient expanding
Ricci soliton. Assume that $S\geq -\frac{n-1}{2}$ on $M.$ Then either $M$ is
connected at infinity or $M=\mathbb{R}\times N^{n-1},$ where $N$ is a
compact Einstein manifold and $\mathbb{R}$ is the Gaussian expanding soliton.
\end{theorem}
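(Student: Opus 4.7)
The plan is to argue by contradiction, following the two-case split used in the proof of Theorem \ref{Rigid_Ric_Neg}. Assume $M$ has at least two ends; then either $M$ has two $f$-nonparabolic ends, or it has exactly one $f$-nonparabolic end together with at least one $f$-parabolic (small) end. In each case the goal is to force all intermediate inequalities in a Bochner-type argument to be equalities, which yields a warped-product splitting of $M$, and then to use the soliton equation together with $S\geq -\tfrac{n-1}{2}$ to identify the splitting as the Gaussian expander $\mathbb{R}$ times a compact Einstein manifold.

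In the first case, Li--Tam theory for $\Delta_{f}$ produces a nonconstant bounded $f$-harmonic function $u$ with $\int_{M}|\nabla u|^{2}e^{-f}\,dv<\infty$. Since $Ric_{f}=-\tfrac{1}{2}g$, the Bochner formula reads $\Delta_{f}|\nabla u|^{2}=2|Hess(u)|^{2}-|\nabla u|^{2}$. Combining the refined Kato inequality with the $f$-harmonic equation $\Delta u=\langle\nabla f,\nabla u\rangle$, one obtains an analog of (\ref{z0}) in which the constant $a$ is replaced by the pointwise quantity $|\nabla f|$, which is unbounded by Theorem \ref{Potential}. I would then integrate this inequality against $|\nabla u|^{2\alpha-2}\psi^{2}e^{-f}$ with $\psi$ a cutoff, rewrite the unbounded $|\nabla f|^{2}$ terms via $|\nabla f|^{2}=-f-S$, and absorb the resulting $-f$ contribution by integration by parts using the soliton identity $\Delta_{f}f=f-\tfrac{n}{2}$. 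The hypothesis $S\geq -\tfrac{n-1}{2}$ is precisely what allows the remaining constants to close at the critical threshold, forcing the inequalities to be saturated.

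In the second case, the $f$-parabolic end has fast-decaying weighted volume, while Lemma \ref{Vol_Ric_Neg}, adapted to the quadratic growth of $f$ supplied by Theorem \ref{Potential}, controls the weighted volume of the $f$-nonparabolic end. I would then imitate the Busemann-function argument from the one-nonparabolic-end case of Theorem \ref{Rigid_Ric_Neg}: construct $\beta$ from a ray in the small end, derive a distributional lower bound on $\Delta_{f}\beta$ from the Laplace comparison (\ref{Laplace_comp}) together with the soliton structure, and test $B=e^{c\beta}$ against suitable cutoffs. The matching two-sided volume estimates force $\Delta_{f}\beta$ and $|\nabla\beta|$ to be constant, and Bochner applied to $\beta$ pins down $Hess(\beta)$.

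In either case one obtains a splitting $M=\mathbb{R}\times_{h}N$. Substituting this into $Ric+Hess(f)=-\tfrac{1}{2}g$ and using $S\geq -\tfrac{n-1}{2}$ forces $h\equiv 1$, $f$ to depend only on the $\mathbb{R}$-variable with the Gaussian profile, and $N$ to be Einstein with $Ric_{N}=-\tfrac{1}{2}g_{N}$; compactness of $N$ is forced by the existence of at least two ends. The main obstacle is the first case: unlike in Theorem \ref{Rigid_Ric_Neg}, the gradient $|\nabla f|$ is unbounded, so the Bochner inequality cannot be closed pointwise to produce a clean $\lambda_{1}$ bound. The key technical device is the weighted integration scheme described above, which exchanges the unbounded $|\nabla f|^{2}$ for the lower-order quantity $-f$ via the soliton identities, with $S\geq -\tfrac{n-1}{2}$ emerging as the sharp threshold at which the argument just barely closes.
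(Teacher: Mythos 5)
Your proposal outlines a plausible two-case framework, but it misses the central device of the paper and, more importantly, the plan for the $f$-parabolic case would not go through as sketched.

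The key missing ingredient is the weighted Poincar\'{e} inequality of Lemma~\ref{WPI}: from the soliton identities one computes $\Delta_f e^f = -(S+\tfrac{n}{2})\,e^f$, which by \cite{LW2} immediately yields $\int_M \sigma\,\phi^2 e^{-f} \leq \int_M |\nabla\phi|^2 e^{-f}$ with $\sigma := S+\tfrac{n}{2}\geq \tfrac12$ under the hypothesis. This is what makes the $f$-nonparabolic case close cleanly: one takes the Li--Tam bounded $f$-harmonic function $h$, applies the ordinary Bochner formula with the \emph{ordinary} Kato inequality (not the refined one) to get $\Delta_f|\nabla h| \geq -\tfrac12|\nabla h|$, and then plugs $|\nabla h|\phi$ into the weighted Poincar\'{e} inequality. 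Since the energy is finite, the cutoff error tends to zero, forcing $\sigma\equiv\tfrac12$, i.e.\ $S\equiv -\tfrac{n-1}{2}$; then $v:=2\sqrt{-f+\tfrac{n-1}{2}}=2|\nabla f|$ has $|\nabla v|=1$ and $\Delta_f v=\tfrac12 v$, and Bochner gives $v_{ij}=0$ and the splitting. Your scheme of using the refined Kato plus an integration-by-parts exchange $|\nabla f|^2=-f-S$ to absorb the unbounded gradient is not shown to close, and it is not needed once one has the weighted Poincar\'{e} inequality.

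The more serious gap is in your second case. You propose to run the Busemann-function argument from Theorem~\ref{Rigid_Ric_Neg} on a ray in the $f$-parabolic end, but that argument requires the global Laplace comparison $\Delta_f d(\cdot,\gamma(t))\leq (n-1)\coth r + a$, which uses $|\nabla f|\leq a$ \emph{along entire minimizing geodesics across} $M$. For a nontrivial expanding soliton, $|\nabla f|$ is unbounded by Theorem~\ref{Growth}, so this comparison is simply unavailable on all of $M$, and the Busemann argument does not get off the ground. The paper instead shows that this case \emph{cannot occur}: on an $f$-parabolic end $E$, the function $u:=n-2f$ satisfies $\Delta_f u=u$ and $|\nabla u|^2\leq 2u$, hence $w:=e^{-u/2}$ is $f$-superharmonic; if $u$ were unbounded on $E$, then $w$ would attain its infimum at infinity, contradicting $f$-parabolicity, so $|f|$ and $|\nabla f|$ are bounded on $E$. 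That in turn gives a volume lower bound $V_f(B_x(1))\geq C_1 e^{-C_2 r(x)}$ on $E$ from Laplace comparison (which is now valid locally on $E$), while $\Delta_f u^k\geq k u^k$ and the Li--Wang decay estimate force $\int_{E\setminus E(r)} e^{-f}\leq C(k)e^{-\sqrt{4k+2}\,r}$ for every $k$, a contradiction. In short, you need to rule out $f$-parabolic ends outright rather than try to split off a Busemann function from them, and you need to find the weighted Poincar\'{e} inequality to handle the nonparabolic case.
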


Before proving the Theorem, we first establish a weighted Poincar\'{e}
inequality for expanding gradient Ricci solitons. The importance of a
weighted Poincar\'{e} inequality for the issue of connectedness at infinity
of Riemannian manifolds has been exemplified in \cite{LW2}.

\begin{lemma}
\label{WPI}Let $\left( M,g,f\right) $ be a complete nontrivial expanding
gradient Ricci soliton. Define $\sigma :=S+\frac{n}{2}.$ Then $\sigma >0$ on 
$M$ and 
\begin{equation*}
\int_{M}\sigma \phi ^{2}e^{-f}\leq \int_{M}\left\vert \nabla \phi
\right\vert ^{2}e^{-f}
\end{equation*}%
for any $\phi \in C_{0}^{\infty }\left( M\right) .$
\end{lemma}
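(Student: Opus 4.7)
The plan is to obtain the weighted Poincar\'{e} inequality via a ground-state substitution. Specifically, I would look for a positive smooth function $v$ on $M$ satisfying $\Delta_f v = -\sigma v$, and then transfer the weighted Dirichlet integral of $\phi$ to that of $\psi := \phi/v$, picking up $\sigma \phi^2$ along the way.

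For the positivity of $\sigma$, I would invoke the third line of (\ref{p1}), which is the strict version $S > -n/2$ on any nontrivial expanding soliton recalled from \cite{PRS, Z}: the bound $S \geq -n/2$ is attained only when the soliton is Einstein with constant potential, and this is excluded by the nontriviality hypothesis. Hence $\sigma = S + n/2 > 0$ everywhere.

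The natural candidate for the ground state is $v := e^{f}$. Using the soliton trace identity $\Delta f = -n/2 - S$, a direct computation gives
\[
\Delta_f e^f = \Delta e^f - \langle \nabla f, \nabla e^f \rangle = e^f \Delta f + e^f|\nabla f|^2 - e^f|\nabla f|^2 = e^f \Delta f = -\sigma\, e^f,
\]
so $v$ solves $\Delta_f v = -\sigma v$ identically on $M$. Next, for $\phi \in C_0^\infty(M)$, set $\psi := \phi/v = \phi e^{-f}$, which is smooth and compactly supported since $v > 0$. Integration by parts against the weighted measure (which, since $e^{-f}\Delta_f = \operatorname{div}(e^{-f}\nabla\cdot)$, is just the divergence theorem) yields
\[
\int_M v\psi^{2}(\Delta_f v)\, e^{-f} = -\int_M \langle \nabla(v\psi^{2}), \nabla v\rangle e^{-f} = -\int_M \bigl(\psi^{2}|\nabla v|^{2} + 2v\psi\langle \nabla v, \nabla \psi\rangle\bigr)e^{-f}.
\]
Expanding $|\nabla(v\psi)|^{2} = \psi^{2}|\nabla v|^{2} + 2v\psi\langle \nabla v,\nabla \psi\rangle + v^{2}|\nabla \psi|^{2}$ and combining with the displayed identity gives
\[
\int_M |\nabla \phi|^{2} e^{-f} = \int_M v^{2}|\nabla \psi|^{2} e^{-f} + \int_M \sigma \phi^{2} e^{-f},
\]
where I used $\Delta_f v = -\sigma v$ and $\phi^{2} = v^{2}\psi^{2}$. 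Dropping the nonnegative first term on the right produces the desired inequality.

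The crux of the argument is recognizing that $v = e^{f}$ is precisely the ground state of $\Delta_f + \sigma$; once this identity is noted, everything else is a standard Agmon/Barta transformation. No delicate analysis is required, and the positivity of $\sigma$ is handed to us by the maximum-principle result already cited in the discussion preceding Theorem \ref{Potential}.
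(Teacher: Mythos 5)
Your proof is correct and follows essentially the same route as the paper: you identify the positive solution $v = e^{f}$ of $\Delta_f v = -\sigma v$ via the trace identity $\Delta f = -\sigma$, and then obtain the weighted Poincar\'e inequality by the standard ground-state (Agmon/Barta) transformation. The only difference is that you carry out the integration-by-parts argument explicitly, whereas the paper simply cites Proposition 1.1 of \cite{LW2}, which encapsulates exactly that computation.
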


\begin{proof}[Proof of Lemma \protect\ref{WPI}]
The fact that $\sigma >0$ is clear from (\ref{e3}). Using (\ref{e1}) and (%
\ref{e2}), we compute

\begin{equation*}
\Delta _{f}e^{f}=\left( \Delta _{f}\left( f\right) +\left\vert \nabla
f\right\vert ^{2}\right) e^{f}=\left( \Delta f\right) e^{f}=-\left( \frac{n}{%
2}+S\right) e^{f}.
\end{equation*}%
Now Proposition 1.1 in \cite{LW2} implies the claimed weighted Poincar\'{e}
inequality.
\end{proof}

Using Lemma \ref{WPI}, we proceed to prove the Theorem.

\begin{proof}[Proof of Theorem \protect\ref{Connected}]
Since $M$ satisfies a weighted Poincar\'{e} inequality, from \cite{LW2} it
follows that $M$ is $f-$nonparabolic. By hypothesis $S\geq -\frac{n-1}{2},$
we see that 
\begin{equation*}
\sigma =\frac{n}{2}+S\geq \frac{1}{2},
\end{equation*}%
i.e., the bottom spectrum of the weighted Laplacian on $M$ satisfies $%
\lambda _{1}\left( \Delta _{f}\right) \geq \frac{1}{2}.$

We first show that all ends of $M$ must be $f-$nonparabolic. Suppose $E$ is
an $f-$parabolic end of $M.$ Let us observe first that both $\left\vert
f\right\vert $ and $\left\vert \nabla f\right\vert $ must be bounded on $E.$
To see this, let us denote by 
\begin{equation*}
u:=n-2f.
\end{equation*}%
It is easy to check, using (\ref{p1}), that $u$ has the following properties
on $M:$%
\begin{eqnarray*}
u &\geq &1 \\
\Delta _{f}u &=&u \\
\left\vert \nabla u\right\vert ^{2} &\leq &2u.
\end{eqnarray*}%
Consequently, a direct computation shows that the function $w:=e^{-\frac{1}{2} u}>0$
verifies $\Delta _{f}w\leq 0.$ If $u$ is an unbounded function on $E,$ then $%
w$ is a positive $f-$superharmonic function on $E,$ which achieves its
infimum at the infinity of $E.$ It is well known that this implies that $E$
is $f-$nonparabolic, see \cite{L} for details. This contradicts our
assumption that $E$ is $f-$parabolic, therefore $u$ must be bounded on $E.$
In particular, there exists a constant $A>0$ such that 
\begin{equation}
\left\vert f\right\vert +\left\vert \nabla f\right\vert \leq A\ \ \ \text{on 
}E.  \label{c5}
\end{equation}

By the Laplace comparison theorem in \cite{W}, or cf. (\ref{Laplace_comp})
here, it follows that for any two points $x,y\in E,$ 
\begin{equation*}
\Delta _{f}d\left( x,y\right) \leq a\coth d\left( x,y\right) ,
\end{equation*}%
for some constant $a$ independent of $x$ or $y.$ It is standard to obtain
from here that 
\begin{equation}
V_{f}\left( B_{x}\left( 1\right) \right) \geq C_{1}e^{-C_{2}r\left( x\right)
},  \label{c3}
\end{equation}%
for $C_{1}$ and $C_{2}$ independent of $x.$ Here, $r(x):=d(p,x)$, for $p \in M$ fixed. 

In the following, we obtain a contradiction to (\ref{c3}). Since $\sigma
\geq \frac{1}{2},$ it follows by \cite{LW0} that 
\begin{equation}
\int_{E\backslash E\left( r\right) }e^{-f}\leq Ce^{-\sqrt{2}r},  \label{c1}
\end{equation}%
where $E\left( r\right) :=E\cap B_{p}\left( r\right) $ and $C$ is a constant
independent of $r.$ Since $\Delta _{f}u=u,$ \ it is easy to see that 
\begin{equation*}
\Delta _{f}u^{k}\geq ku^{k}\ \ \ \text{for all }k\geq 1.
\end{equation*}%
We claim that Li-Wang's result in \cite{LW0} implies 
\begin{equation}
\int_{E\backslash E\left( r\right) }u^{2k}e^{-f}\leq e^{-\sqrt{4k+2}\,\left(
r-r_{0}\right) }\int_{E\left( r_{0}\right) \backslash E\left( r_{0}-1\right)
}u^{2k}e^{-f}  \label{c2}
\end{equation}%
for any $r>2r_{0}$ and any positive integer $k.$ For that, we need only to
check that 
\begin{equation*}
\frac{1}{R}\int_{E\left( R\right) }u^{2k}e^{-\sqrt{4k+2}\,r}e^{-f}%
\rightarrow 0\ \ \text{as \ }R\rightarrow \infty .
\end{equation*}%
This is clearly implied by the fact that $u$ is bounded on $E$ and by (\ref{c1}). 
Hence, the claim (\ref{c2}) is true. Since $u\geq 1$ is bounded, we see from
(\ref{c2}) that for any $k\geq 1,$ there exists a constant $C\left( k\right) 
$ so that 
\begin{equation*}
\int_{E\backslash E\left( r\right) }e^{-f}\leq C\left( k\right) e^{-\sqrt{%
4k+2}\,r},\ \ \ \text{for all }r>2r_{0}.
\end{equation*}

Choosing $k$ large enough, this clearly contradicts with (\ref{c3}). In
conclusion, all ends of $M$ must be $f-$nonparabolic.

We now deal with the $f-$nonparabolic ends. Suppose $M$ has at least two
ends. Let $E$ be an $f-$nonparabolic end. Then it follows that $%
F:=M\backslash E$ is also an $f-$nonparabolic end. By \cite{LT}, there
exists an $f-$harmonic function $h$ on $M$ such that $0<h<1$ and $%
\int_{M}\left\vert \nabla h\right\vert ^{2}e^{-f}<\infty .$

Now the Bochner formula together with the Kato inequality shows%
\begin{equation*}
\frac{1}{2}\Delta _{f}\left\vert \nabla h\right\vert ^{2}=\left\vert
h_{ij}\right\vert ^{2}-\frac{1}{2}\left\vert \nabla h\right\vert ^{2}\geq
\left\vert \nabla \left\vert \nabla h\right\vert \right\vert ^{2}-\frac{1}{2}%
\left\vert \nabla h\right\vert ^{2}
\end{equation*}%
or 
\begin{equation*}
\Delta _{f}\left\vert \nabla h\right\vert \geq -\frac{1}{2}\left\vert \nabla
h\right\vert .
\end{equation*}%
We use this in the weighted Poincar\'{e} inequality to find that for any
cut-off function $\phi ,$ 
\begin{gather*}
\int_{M}\sigma \left\vert \nabla h\right\vert ^{2}\phi ^{2}e^{-f}\leq
\int_{M}\left\vert \nabla \left( \left\vert \nabla h\right\vert \phi \right)
\right\vert ^{2}e^{-f} \\
=\int_{M}\left\vert \nabla \left\vert \nabla h\right\vert ^{2}\right\vert
\phi ^{2}e^{-f}+\frac{1}{2}\int_{M}\left\langle \nabla \left\vert \nabla
h\right\vert ^{2},\nabla \phi ^{2}\right\rangle e^{-f}+\int_{M}\left\vert
\nabla \phi \right\vert ^{2}\left\vert \nabla h\right\vert ^{2}e^{-f} \\
=-\int_{M}\left\vert \nabla h\right\vert \left( \Delta _{f}\left\vert \nabla
h\right\vert \right) \phi ^{2}e^{-f}+\int_{M}\left\vert \nabla \phi
\right\vert ^{2}\left\vert \nabla h\right\vert ^{2}e^{-f} \\
\leq \frac{1}{2}\int_{M}\left\vert \nabla h\right\vert ^{2}\phi
^{2}e^{-f}+\int_{M}\left\vert \nabla \phi \right\vert ^{2}\left\vert \nabla
h\right\vert ^{2}e^{-f}.
\end{gather*}%
Notice that we may choose $\phi $ so that 
\begin{equation*}
\int_{M}\left\vert \nabla \phi \right\vert ^{2}\left\vert \nabla
h\right\vert ^{2}e^{-f}\rightarrow 0\ .
\end{equation*}%
Since $\sigma \geq \frac{1}{2},$ this implies $\sigma =\frac{1}{2}$ or $S=-%
\frac{n}{2}+\frac{1}{2}$ on $M.$ The rigidity of gradient Ricci solitons
with constant scalar curvature has been studied in \cite{PW}. Our situation
here is more special since we have a specific value for $S.$ Let us denote 
\begin{equation*}
v:=2\sqrt{-f+\frac{n-1}{2}}=2\left\vert \nabla f\right\vert .
\end{equation*}%
Notice that $\left\vert \nabla v\right\vert =1$ at points
where $v \neq 0.$ Moreover, observe that since%
\begin{eqnarray*}
\Delta _{f}v^{2} &=&4\Delta _{f}\left( -f\right) =4\left( \frac{n}{2}%
-f\right) =v^{2}+2 \\
\Delta _{f}v^{2} &=&2v\Delta _{f}v+2\left\vert \nabla v\right\vert
^{2}=2v\Delta _{f}v+2,
\end{eqnarray*}%
we get that $\Delta _{f}v=\frac{1}{2}v$ whenever $v\neq 0.$ This implies
that $v$ is in fact smooth and $\left\vert \nabla v\right\vert =1$
everywhere on $M.$ The Bochner formula gives 
\begin{equation*}
0=\frac{1}{2}\Delta _{f}\left\vert \nabla v\right\vert ^{2}=\left\vert
v_{ij}\right\vert ^{2}-\frac{1}{2}\left\vert \nabla v\right\vert
^{2}+\left\langle \nabla \Delta _{f}v,\nabla v\right\rangle =\left\vert
v_{ij}\right\vert ^{2}.
\end{equation*}%
Therefore, $v_{ij}=0$ and $M$ admits a parallel vector field $\nabla v.$ So $%
M$ is isometric to $\mathbb{R}\times N.$ Since $M$ is assumed to have two
ends, $N$ must be a compact expanding gradient Ricci soliton. But it is well
known that $N$ then has to be Einstein. The Theorem is proved.
\end{proof}

\section{\protect\bigskip Volume of shrinking Ricci solitons\label{Shrinkers}%
}

Consider a shrinking gradient Ricci soliton $\left( M,g,f\right) .$ By
definition, this is a Riemannian manifold $\left( M,g\right) $ for which
there exists a smooth $f$ so that $Ric+Hess\left( f\right) =\frac{1}{2}g.$
Taking the trace of this equation, we also obtain $S+\Delta f=\frac{n}{2},$
where $S$ denotes the scalar curvature of $M.$ It is well known \cite{H}
that $S+\left\vert \nabla f\right\vert ^{2}=f$ after adjusting $f$ by adding
a suitable constant. With this normalization of $f$ the Perelman's invariant
is defined by \cite{P, CN}: 
\begin{equation*}
\mu _{0}:=-\log \left( \left( 4\pi \right) ^{-\frac{n}{2}}\int_{M}e^{-f}%
\right) <\infty .
\end{equation*}

In this section we prove that the volume of a complete noncompact, shrinking
gradient Ricci soliton is of at least linear growth. As pointed out in the
introduction, this result is optimal. The corresponding result for manifolds
with non-negative Ricci curvature was proved independently by Calabi \cite%
{Cl} and Yau \cite{Y2}.

\begin{theorem}
\label{Vol_Shrink}Let $\left( M,g,f\right) $ be a complete noncompact,
shrinking gradient Ricci soliton. Then there exists a constant $C>0$
depending only on dimension $n$ and the Perelman's invariant $\mu _{0}$
defined above such that 
\begin{equation*}
Vol\left( B_{p}\left( r\right) \right) \geq C\,r
\end{equation*}%
for all $r\geq r_0,$ where $p$ is a minimum point of $f$ and $r_0$ depends only on $n$.
\end{theorem}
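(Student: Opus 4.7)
The strategy is to combine the finiteness of the Perelman integral $\int_M e^{-f}\, dv = (4\pi)^{n/2} e^{-\mu_0}$ with Cao-Zhou's pinching $\tfrac{1}{4}(r-c)^2 \leq f \leq \tfrac{1}{4}(r+c)^2$ and the soliton identity $\Delta_f f = \tfrac{n}{2} - f$, via a level-set analysis of the smooth substitute for distance $\rho := 2\sqrt{f}$.

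\emph{Preliminaries.} Since $f = S + |\nabla f|^2 \geq 0$ (using Chen's $S \geq 0$), and $V(B_p(r)) \leq C r^n$ by Cao-Zhou, the Gaussian tail estimate
\[
\int_{M \setminus B_p(R)} e^{-f}\, dv \leq C R^n\, e^{-(R-c)^2/4}
\]
combines with the Perelman identity to give the constant weighted-volume lower bound $V_f(B_p(R)) \geq \tfrac{1}{2}(4\pi)^{n/2} e^{-\mu_0}$ for $R \geq R_0(n,\mu_0)$.

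\emph{Level-set identity.} Direct computation using $|\nabla f|^2 = f - S$ and $\Delta f = \tfrac{n}{2}-S$ yields $|\nabla \rho|^2 = 1 - S/f \leq 1$, while Cao-Zhou gives $|\rho - r| \leq c$. The key algebraic identity is
\[
\Delta_f(\rho^2) = \Delta_f(4f) = 2n - \rho^2.
\]
Setting $\Omega(T) := \{\rho \leq T\}$ (compact since $f$ is proper) and integrating by parts, noting $f \equiv T^2/4$ on $\partial \Omega(T)$ and $\nu = \nabla\rho/|\nabla\rho|$,
\[
2T\, e^{-T^2/4} \int_{\partial \Omega(T)} |\nabla \rho|\, d\sigma = \int_{\Omega(T)} (2n - \rho^2) e^{-f}\, dv.
\]
Applying the same identity globally on $M$ (boundary terms decay like $e^{-R^2/4}$) yields the moment relation $\int_M (2n - \rho^2) e^{-f}\, dv = 0$, so the right-hand side above equals $\int_{\Omega(T)^c}(\rho^2 - 2n) e^{-f}\, dv$.

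\emph{Linear growth.} The heart of the proof is the sharp tail lower bound
\[
\int_{\Omega(T)^c}(\rho^2 - 2n) e^{-f}\, dv \geq c\, T\, e^{-T^2/4} \qquad (T \geq T_0),
\]
with $c = c(n,\mu_0) > 0$; for calibration, on the cylinder $\mathbb{R} \times N^{n-1}$ this integral is $\sim c\, T\, e^{-T^2/4}\, \mathrm{Vol}(N)$, matching the sharp threshold. Granted this, the identity yields $\int_{\partial \Omega(T)} |\nabla \rho|\, d\sigma \geq c/2$, and from $|\nabla \rho| \leq 1$ together with co-area,
\[
V'(T) = \int_{\partial \Omega(T)} \frac{d\sigma}{|\nabla \rho|} \geq \int_{\partial \Omega(T)} |\nabla \rho|\, d\sigma \geq \tfrac{c}{2},
\]
where $V(T) := V(\Omega(T))$. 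Integrating gives $V(\Omega(T)) \geq \tfrac{c}{2}(T - T_0)$, and the pinching $|\rho - r| \leq c$ transfers this to $V(B_p(r)) \geq C r$ for $r \geq r_0$.

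\emph{Main obstacle.} The crux is the sharp tail estimate. A constant lower bound on the tail follows cheaply from the identity at a single $T_0$ (using the step 1 stabilization of $V_f$), but upgrading to the sharp $T\, e^{-T^2/4}$ rate requires exploiting both the global vanishing $\int_M (\rho^2-2n) e^{-f}\, dv = 0$ and the Cao-Zhou lower bound $f \geq \tfrac14(r-c)^2$ to rule out the tail decaying faster than on the cylinder. I would attempt this via an ODE argument on $h(T) := e^{T^2/4} \int_{\Omega(T)^c}(\rho^2 - 2n) e^{-f}\, dv$: differentiating, using Cao-Zhou's upper bound on $V$, and feeding in the step 1 constant bound on $V_f$, should produce a differential inequality forcing $h(T) \geq cT$ for $T$ large, and hence the desired linear volume growth.
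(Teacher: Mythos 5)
Your level-set setup is correct: the identity $\Delta_f(\rho^2)=2n-\rho^2$ with $\rho=2\sqrt{f}$, the weighted integration by parts over $\Omega(T)=\{\rho\le T\}$, the vanishing of the global moment $\int_M(\rho^2-2n)e^{-f}\,dv=0$, and the co-area reduction of linear volume growth to a uniform lower bound $\int_{\partial\Omega(T)}|\nabla\rho|\,d\sigma\ge c>0$ are all valid observations. However, the argument hinges entirely on the ``sharp tail estimate'' $\int_{\Omega(T)^c}(\rho^2-2n)e^{-f}\,dv\ge cTe^{-T^2/4}$, which you flag as the main obstacle and then only sketch how you ``would attempt'' to prove. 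This is not a minor omission; it is essentially a restatement of the theorem. Indeed, via your own identity it is equivalent to the boundary bound $\int_{\partial\Omega(T)}|\nabla\rho|\,d\sigma\ge c/2$, which in turn immediately gives $V'(T)\ge c/2$ and hence linear growth. So the proof reduces the theorem to an equivalent unproved claim.

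Moreover, the ODE strategy you suggest for $h(T):=e^{T^2/4}\int_{\Omega(T)^c}(\rho^2-2n)e^{-f}$ runs the wrong way. Differentiating and using $|\nabla\rho|\le 1$ yields $h'(T)\le\frac{n}{T}h(T)$, an \emph{upper} bound on $h$ of polynomial type. A lower bound of the desired form would require a reverse Cauchy--Schwarz relating $\int_{\partial\Omega(T)}|\nabla\rho|\,d\sigma$ to $\int_{\partial\Omega(T)}|\nabla\rho|^{-1}\,d\sigma$, which amounts to a pointwise lower bound on $|\nabla\rho|^2=1-S/f$ along $\partial\Omega(T)$. No such bound is available in general, since $S$ can come close to $f$. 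Also, your remark that ``a constant lower bound on the tail follows cheaply'' cannot be right as $T\to\infty$, because the tail tends to zero along with $V_f(\Omega(T)^c)$.

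The paper's proof proceeds quite differently. It relies crucially on the Carrillo--Ni logarithmic Sobolev inequality, which you invoke only implicitly through the finiteness of $\int_M e^{-f}$. The paper first proves a lower bound $V(B_x(1))\ge V(B_p(1))e^{-C(n)d(p,x)}$ on unit balls, then applies log Sobolev to a family of cutoffs $u_t$ supported outside $D(t-1)$ to derive the differential inequality $ty'(t)-2y(t)\log y(t)\le Cy(t-1)$ for $y(t)=\int_M u_t^2$; an iterative bootstrap forces $y(t)$ to decay with arbitrarily large exponential rate if $\mathrm{Vol}(M)<\infty$, contradicting the unit ball estimate, so $\mathrm{Vol}(M)=\infty$. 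Finally, assuming $V(r)\le\varepsilon r$, a second application of log Sobolev on annular cutoffs together with the Cao--Zhou comparison $\frac{V(t+1)}{(t+1)^n}-\frac{V(t)}{t^n}\le\frac{4\chi(t+1)}{(t+1)^{n+2}}$ propagates the bound $V(k)\le 2\varepsilon r$ to all $k$, forcing finite volume and a contradiction. You would need to either prove the sharp tail estimate by an independent argument or adopt a mechanism comparable to the paper's log Sobolev bootstrap to close the gap.
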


Our proof of Theorem \ref{Vol_Shrink} involves Perelman's ideas in \cite{P,
ST} and a logarithmic Sobolev inequality for shrinking gradient Ricci
solitons \cite{CN}. The main difference here is that no extra assumptions
are imposed on the curvature.

The proof of the Theorem consists of several steps. We begin by proving that
the volume of unit balls decay at most exponentially on a noncompact
shrinking gradient Ricci soliton. This may be of independent interest. We
then show that the volume of any noncompact shrinking gradient Ricci soliton
must be infinite in Lemma \ref{Infinite}, a fact which also appeared in \cite%
{Ca1}. However, our approach here is different. With this fact, we then
complete our argument for Theorem \ref{Vol_Shrink}.

\begin{lemma}
\label{Decay} Let $\left( M,g,f\right) $ be a complete noncompact, shrinking
gradient Ricci soliton. Then there exists a constant $C(n)>0$ depending only
on dimension $n$ such that

\begin{equation*}
V\left( B_{x}\left( 1\right) \right) \geq V\left( B_{p}\left( 1\right)
\right)\,e^{-C(n)\,d\left( p,x\right) }
\end{equation*}
for all $x\in M,$ where $p$ is a minimum point of $f.$
\end{lemma}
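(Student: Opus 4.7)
The plan is to exploit the soliton equation together with the criticality $\nabla f(p)=0$ and Cao--Zhou's quadratic growth estimates on $f$ in order to obtain a uniform integrated radial Ricci bound from $p$, and then convert this into a ball comparison via iteration.

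First, along any minimizing geodesic $\gamma:[0,r]\to M$ starting at $p$, the soliton equation $\mathrm{Ric}+\mathrm{Hess}(f)=\tfrac12 g$ combined with $\mathrm{Hess}(f)(\gamma',\gamma')=(f\circ\gamma)''$ yields
\[
\int_0^r \mathrm{Ric}(\gamma',\gamma')\,dt=\frac{r}{2}-\langle\nabla f,\gamma'\rangle\big|_{\gamma(r)},
\]
because $\nabla f(p)=0$. Together with $|\nabla f|^{2}\le f\le(r+c)^{2}/4$ (from $|\nabla f|^{2}+S=f$, $S\ge 0$, and the Cao--Zhou upper bound), this integrated quantity is uniformly bounded below by a constant $-c_0$, independent of $r$ or direction. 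Feeding this into the Jacobi ODE $\psi''+\mathrm{Ric}(\gamma',\gamma')\psi/(n-1)\le 0$ satisfied by $\psi=J^{1/(n-1)}$ (where $J(t,\xi)$ is the volume Jacobian in geodesic polar coordinates at $p$), and integrating by parts against the antiderivative $F(t)=\int_0^t \mathrm{Ric}(\gamma',\gamma')\,ds$, one extracts a Gronwall-type inequality $\psi'(t)\le 1+C(n)\psi(t)$, which integrates to $J(t,\xi)\le C(n)\,e^{C(n) t}$ along every radial ray.

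To pass from this longitudinal Jacobian bound to the stated comparison $V(B_x(1))\ge V(B_p(1))\,e^{-C(n) d(p,x)}$, I would iterate a local volume step along the geodesic $\gamma$ from $p$ to $x$. For each integer $k\in\{0,1,\dots,\lfloor d(p,x)\rfloor\}$, the soliton identity applied to short geodesic segments of length at most $2$ emanating from $\gamma(k)$ yields a local integrated Ricci lower bound on $B_{\gamma(k)}(2)$, the endpoint $\nabla f$-terms being absorbable into a universal constant because the short length prevents the linear growth of $|\nabla f|$ from accumulating. A Bishop--Gromov-style step then gives $V(B_{\gamma(k)}(1))\le C(n)\,V(B_{\gamma(k+1)}(1))$, and multiplying across $k$ yields the stated exponential lower bound.

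The hard part will be this last step: the integrated Ricci identity is naturally tied to geodesics issuing from $p$, whereas the iteration requires the analogous control along short geodesics starting at arbitrary $\gamma(k)$. I expect to resolve this by a direct Jacobi field analysis on each $B_{\gamma(k)}(2)$, leveraging that $|\nabla f|$ varies only by $O(1)$ across a ball of radius $2$, so the resulting $\nabla f$-correction at the endpoints is uniform; alternatively, one can convert the pointwise control into an $L^{p}$-integrated Ricci bound on these balls and invoke the appropriate generalization of the Bishop--Gromov comparison. Either way the final constant $C(n)$ should depend only on $n$.
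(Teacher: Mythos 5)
Your first step---the integrated Ricci identity
\[
\int_0^r \mathrm{Ric}(\gamma',\gamma')\,dt=\tfrac{r}{2}-\langle\nabla f,\gamma'\rangle\big|_{\gamma(r)}\geq -c_0
\]
along geodesics issuing from the critical point $p$---is a nice observation, but the proposal has a genuine gap in the iteration step, and it is precisely the step you flag as ``the hard part.'' The claim that $V(B_{\gamma(k)}(1))\leq C(n)\,V(B_{\gamma(k+1)}(1))$ with a \emph{universal} $C(n)$ requires some uniform control on the Ricci curvature (or on $J'/J$) along geodesics emanating from $\gamma(k+1)$ into the ball $B_{\gamma(k+1)}(2)$. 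On a general shrinking soliton, Ricci has no pointwise lower bound, and the soliton identity does not rescue you here: along a short segment $\sigma$ of length $L\leq 2$ inside $B_{\gamma(k)}(2)$, integrating the soliton equation gives
\[
\int_0^L \mathrm{Ric}(\sigma',\sigma')\,dt=\tfrac{L}{2}-\bigl[\langle\nabla f,\sigma'\rangle\bigr]_0^L,
\]
and the boundary term is \emph{not} $O(1)$. Since $|\nabla f|\sim\tfrac12 d(p,\cdot)\sim\tfrac12 k$ on $B_{\gamma(k)}(2)$ and $\mathrm{Hess}(f)=\tfrac12 g-\mathrm{Ric}$ can have large eigenvalues wherever $\mathrm{Ric}$ is very negative, the difference $\langle\nabla f,\sigma'\rangle|_0^L$ can a priori be of size $O(k)$, not $O(1)$. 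The short length does not prevent $|\nabla f|$ from being large at both endpoints; it only prevents $d(p,\cdot)$ from \emph{changing} much, which is not the same thing. So there is no universal local Bishop--Gromov step, and no radial $L^p$-Ricci bound on these balls to invoke either.

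The paper's proof takes a genuinely different route that avoids the local iteration altogether: it writes the volume form in geodesic polar coordinates centered at the \emph{arbitrary} point $x$ (not at $p$), uses $B_p(1)\subset B_x(r+1)\setminus B_x(r-1)$ with $r=d(p,x)$, and analyzes the Jacobian $J(x,t,\xi)$ along geodesics from $x$ that land in $B_p(1)$ at time $T\approx r$. Substituting $\mathrm{Ric}=\tfrac12 g-\mathrm{Hess}(f)$ into the standard inequality $\frac{J'}{J}(t)\leq\frac{n-1}{t}-\frac{1}{t^2}\int_0^t s^2\mathrm{Ric}\,ds$ and integrating by parts gives
\[
\frac{J'}{J}(t)\leq\frac{n-1}{t}+c+f'(t)+\tfrac12(r-t).
\]
Integrating this from $t=1$ to $t=T$ produces the boundary contribution $f(T)-f(1)$ together with $\tfrac12\bigl(rT-\tfrac12 T^2\bigr)\approx\tfrac14 r^2$. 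The magic is the cancellation: $f(T)=O(1)$ because $\gamma(T)\in B_p(1)$, while $f(1)=f(\gamma(1))\geq\tfrac14 r^2-cr$ because $\gamma(1)$ is still at distance $\approx r$ from $p$ (Cao--Zhou). So $f(T)-f(1)\leq-\tfrac14 r^2+cr$ cancels the $+\tfrac14 r^2$ term and leaves only $\log\frac{J(T)}{J(1)}\leq cr$, which is exactly the linear exponent you need. This cancellation is tied to the geodesic running from $x$ all the way back to $p$; it does not localize to short segments, which is why the iterative approach cannot reproduce it. If you want to salvage your plan, the move is to abandon the local step and instead compare the Jacobians $J(x,T,\xi)$ and $J(x,1,\xi)$ directly along the full geodesic toward $B_p(1)$, as in the paper.
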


\begin{proof}[Proof of Lemma \protect\ref{Decay}]
Take any point $x\in M$ and express the volume form in the geodesic polar
coordinates centered at $x$ as 
\begin{equation*}
dV|_{\exp _{x}\left( r\xi \right) }=J\left( x,r,\xi \right) drd\xi
\end{equation*}%
for $r>0$ and $\xi \in S_{x}M,$ a unit tangent vector at $x.$ In the
following, we will omit the dependence of these quantities on $\xi .$ Let $%
x\in M$ be arbitrary and 
\begin{equation*}
r:=d\left( p,x\right) .
\end{equation*}%
Notice that by the triangle inequality, $B_{p}\left( 1\right) \subset
B_{x}\left( r+1\right) \backslash B_{x}\left( r-1\right) .$ Let $\gamma
\left( s\right) $ be a minimizing geodesic starting from $x$, such that $%
\gamma \left( 0\right) =x$ and $\gamma \left( T\right) \in B_{p}\left(
1\right) ,$ for some%
\begin{equation}
r-1\leq T\leq r+1.  \label{i16'}
\end{equation}

Along $\gamma $ we have, by a standard formula:%
\begin{equation*}
\frac{J^{\prime }}{J}\left( t\right) \leq \frac{n-1}{t}-\frac{1}{t^{2}}%
\int_{0}^{t}s^{2}Ric\left( \gamma ^{\prime }\left( s\right) ,\gamma ^{\prime
}\left( s\right) \right) ds.
\end{equation*}%
Using the soliton equation that 
\begin{equation*}
Ric\left( \gamma ^{\prime }\left( s\right) ,\gamma ^{\prime }\left( s\right)
\right) =\frac{1}{2}-Hess\left( f\right) \left( \gamma ^{\prime }\left(
s\right) ,\gamma ^{\prime }\left( s\right) \right)
\end{equation*}%
and then integrating by parts we obtain: 
\begin{equation}
\frac{J^{\prime }}{J}\left( t\right) \leq \frac{n-1}{t}-\frac{1}{6}%
t+f^{\prime }\left( t\right) -\frac{2}{t^{2}}\int_{0}^{t}sf^{\prime }\left(
s\right) ds,  \label{i17}
\end{equation}%
where $f\left( s\right) :=f\left( \gamma \left( s\right) \right) $ and $%
0\leq t\leq T$. We note that by the triangle inequality 
\begin{equation*}
d\left( p,\gamma \left( s\right) \right) \leq d\left( p,\gamma \left(
T\right) \right) +d\left( \gamma \left( T\right) ,\gamma \left( s\right)
\right) \leq T-s+1
\end{equation*}%
and 
\begin{equation*}
d\left( p,\gamma \left( s\right) \right) \geq d\left( \gamma \left( T\right)
,\gamma \left( s\right) \right) -d\left( p,\gamma \left( T\right) \right)
\geq T-s-1.
\end{equation*}

So by the estimates of $f$ proved in \cite{CZ, HM} together with (\ref{i16'}%
) we obtain that 
\begin{eqnarray}
\frac{1}{4}\left[ \left( r-s-c\left( n\right) \right) _{+}\right] ^{2} &\leq
&f\left( \gamma \left( s\right) \right) \leq \frac{1}{4}\left( r-s+c\left(
n\right) \right) ^{2}\ \ \ \text{and}  \label{i18} \\
\left\vert \nabla f\right\vert \left( \gamma \left( s\right) \right) &\leq &%
\frac{1}{2}\left( r-s+c\left( n\right) \right) ,  \notag
\end{eqnarray}%
where $c\left( n\right) $ is a constant depending only on $n$ and $%
a_{+}:=\max \left\{ a,0\right\} .$ In the following, we will denote by $c$ a
constant depending only on $n,$ which may change from line to line.

Plugging this into (\ref{i17}) results in%
\begin{eqnarray*}
\frac{J^{\prime }}{J}\left( t\right) &\leq &\frac{n-1}{t}-\frac{1}{6}%
t+f^{\prime }\left( t\right) +\frac{1}{t^{2}}\int_{0}^{t}s\left(
r-s+c\right) ds \\
&=&\frac{n-1}{t}+c+f^{\prime }\left( t\right) +\frac{1}{2}\left( r-t\right) .
\end{eqnarray*}%
Integrating from $t=1$ to $t=T$ we obtain that 
\begin{equation}
\log \frac{J\left( T\right) }{J\left( 1\right) }\leq cT+f\left( T\right)
-f\left( 1\right) +\frac{1}{2}\left( rT-\frac{1}{2}T^{2}\right) .
\label{i18'}
\end{equation}%
Notice that $f\left( T\right) \leq c$ as $\gamma \left( T\right) \in
B_{p}\left( 1\right) .$ On the other hand, by (\ref{i18}), 
\begin{equation*}
f\left( 1\right) =f\left( \gamma \left( 1\right) \right) \geq \frac{1}{4}%
r^{2}-c\left( n\right) r.
\end{equation*}%
Hence, combining with (\ref{i16'}), one sees from (\ref{i18'}) that 
\begin{equation*}
\log \frac{J\left( T\right) }{J\left( 1\right) }\leq cr.
\end{equation*}%
In other words, 
\begin{equation*}
J\left( x,T,\xi \right) \leq e^{cd\left( p,x\right) }J\left( x,1,\xi \right)
,\ \ \text{whenever \ }\exp _{x}\left( T\xi \right) \in B_{p}\left( 1\right)
.
\end{equation*}%
By integrating this over a subset of $S_{x}M$ consisting of all unit tangent
vectors $\xi $ so that $\exp _{x}\left( T\xi \right) \in B_{p}\left(
1\right) $ for some $T,$ it follows that 
\begin{equation*}
V\left( B_{p}\left( 1\right) \right) \leq e^{cd\left( p,x\right) }A\left(
\partial B_{x}\left( 1\right) \right)
\end{equation*}%
for a constant $c$ depending only on $n.$

The same argument implies in fact that 
\begin{equation*}
V\left( B_{p}\left( 1\right) \right) \leq e^{cd\left( p,x\right) }A\left(
\partial B_{x}\left( \rho \right) \right)
\end{equation*}%
for all $1/2\leq \rho \leq 1.$ After integrating with respect to $\rho ,$ we
have 
\begin{equation*}
V\left( B_{x}\left( 1\right) \right) \geq V\left( B_{p}\left( 1\right)
\right) e^{-cd\left( p,x\right) }.
\end{equation*}%
The lemma is proved.
\end{proof}

We now establish the fact that the volume of a noncompact shrinking gradient
Ricci soliton is infinite.

\begin{lemma}
\label{Infinite} Let $\left( M,g,f\right) $ be a complete noncompact,
shrinking gradient Ricci soliton. Then $Vol\left( M\right) =\infty .$
\end{lemma}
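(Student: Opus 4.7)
Suppose for contradiction that $V(M) < \infty$. I would combine the trace soliton identity $\Delta f = \tfrac{n}{2} - S$ with the Cao--Zhou quadratic bounds on $f$ and Lemma~\ref{Decay} to reach a contradiction.

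First, since $S \geq 0$ on a shrinking soliton, integrating over $B_p(r)$ and applying the divergence theorem yields
\begin{equation*}
\int_{\partial B_p(r)} \langle \nabla f, \nu\rangle\, dA \;=\; \int_{B_p(r)}\Bigl(\tfrac{n}{2} - S\Bigr) dv \;\leq\; \tfrac{n}{2}\, V(M) \;<\; \infty.
\end{equation*}
The Cao--Zhou bound $f(x) \geq \tfrac{1}{4}(r(x) - c)^{2}$ combined with $|\nabla f|^{2} = f - S$ and the asymptotic radiality of $\nabla f$ gives $\langle \nabla f, \nu\rangle \geq \tfrac{r}{2} - C$ on $\partial B_p(r)$ for $r$ large, so $A(\partial B_p(r)) \leq C V(M) / r$. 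In particular the annular volumes $V(B_p(r+1)\setminus B_p(r-1))$ decay to zero as $r\to\infty$.

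Next, I would match this with a lower bound on the annular volume coming from Lemma~\ref{Decay}. For every $x \in \partial B_p(r)$ the unit ball $B_x(1) \subset B_p(r+1)\setminus B_p(r-1)$ satisfies $V(B_x(1)) \geq V(B_p(1))\,e^{-Cr}$. Taking a maximal $\tfrac{1}{4}$-separated net $\{x_{j}\}$ in $\partial B_p(r)$ and the pairwise disjoint $\tfrac{1}{8}$-balls around its points, a weighted Bishop--Gromov comparison (to pass from unit balls to $\tfrac{1}{8}$-balls) produces
\begin{equation*}
V(B_p(r+1)\setminus B_p(r-1)) \;\geq\; c\, \mathcal{N}(r)\, e^{-Cr},
\end{equation*}
with $\mathcal{N}(r) \gtrsim A(\partial B_p(r))$. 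Inserted into the first step and integrated in $r$, this contradicts $V(M) < \infty$ once $r$ is chosen large enough.

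The main obstacle is making the asymptotic radiality of $\nabla f$ \emph{uniform} on $\partial B_p(r)$, i.e.\ $\langle\nabla f,\nu\rangle \geq \tfrac{r}{2} - C$ everywhere on the geodesic sphere and not only along individual rays. This should follow by integrating Hamilton-type identities for shrinkers along radial geodesics and controlling the error with Cao--Zhou, but it is the technical crux. If this step proves delicate, a cleaner alternative is to feed the constant test function $\phi \equiv 1/\sqrt{V(M)}$ into the Carrillo--Ni logarithmic Sobolev inequality for shrinking solitons, producing a relation among $\mu_{0}$, $V(M)$, and $\int_{M} S\, dv$ that, when combined with the flux identity above, directly contradicts $V(M) < \infty$.
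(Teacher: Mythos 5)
Your proposal has a fatal quantitative gap. Granting your flux estimate (which, incidentally, you correctly flag as delicate: making $\langle \nabla f,\nu\rangle \ge \tfrac{r}{2}-C$ \emph{uniform} on $\partial B_p(r)$ requires a uniform bound on $\int_0^r Ric(\gamma',\gamma')\,dt$ along all minimizing geodesics from the minimum point, which is essentially the hardest step in the Cao--Zhou estimate), you only obtain $A(\partial B_p(r)) \le C\,V(M)/r$, hence an upper bound of order $1/r$ on annular volumes. Lemma~\ref{Decay} gives a lower bound of order $e^{-Cr}$. Since $1/r \gg e^{-Cr}$ for large $r$, these two bounds are perfectly compatible, and no contradiction follows. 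The same problem sinks the variant with $\mathcal{N}(r) \gtrsim A(\partial B_p(r))$: you get $V(\text{annulus}) \gtrsim A(\partial B_p(r))\,e^{-Cr}$, which is trivially weaker than the co-area identity and again does not conflict with the flux upper bound. Your suggested alternative — plugging the constant $\phi\equiv 1/\sqrt{V(M)}$ into the Carrillo--Ni inequality — just yields $-\log V(M) \le \mu_0 + \tfrac{1}{V(M)}\int_M S \le \mu_0 + \tfrac{n}{2}$, i.e.\ a \emph{lower} bound $V(M)\ge e^{-\mu_0-n/2}$, which is consistent with finite volume and hence also not a contradiction.

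What is missing is a mechanism that forces the tail volume to decay \emph{faster than any fixed exponential rate}, so that it genuinely collides with the $e^{-C(n)r}$ lower bound from Lemma~\ref{Decay}. This is exactly what the paper does: it plugs a one-parameter family of annular cutoff functions $u_t$ (supported where $\rho=2\sqrt{f}\ge t-1$, equal to $1$ where $\rho\ge t$) into the Carrillo--Ni logarithmic Sobolev inequality, expresses the curvature term via $S=\tfrac{n}{2}-\Delta f$ and an integration by parts, and arrives at the differential inequality $t\,y'(t) - 2y(t)\log y(t) \le C\,y(t-1)$ for the tail $y(t)=\int_M u_t^2$. The $y\log y$ term is the engine: bootstrapping this inequality (à la Phong--Song--Sturm--Weinkove) upgrades any exponential decay rate for $y(t)$ to a strictly larger one, giving superexponential decay, which then contradicts Lemma~\ref{Decay}. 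Your approach never produces the $y\log y$ nonlinearity — indeed it uses log-Sobolev only with a single test function, or not at all — and so it cannot close the argument.
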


\begin{proof}[Proof of Lemma \protect\ref{Infinite}]
Suppose to the contrary that 
\begin{equation*}
Vol\left( M\right) =V<\infty .
\end{equation*}

Define 
\begin{equation*}
\rho :=2\sqrt{f}\text{ \ \ and \ \ }D\left( t\right) :=\left\{ \rho \leq
t\right\} .
\end{equation*}%
According to \cite{CZ, HM}, 
\begin{equation*}
d\left( p,x\right) -a\leq \rho \left( x\right) \leq d\left( p,x\right) +a,
\end{equation*}%
for a constant $a>0$ depending only on $n.$ Moreover, it is easy to see that 
$\left\vert \nabla \rho \right\vert \leq 1$ on $M.$ We let 
\begin{equation*}
V\left( t\right) :=Vol\left( D\left( t\right) \right) \text{ \ \ and \ \ \ }%
\chi \left( t\right) :=\int_{D\left( t\right) }S.
\end{equation*}%
By the co-area formula, we have 
\begin{equation*}
V^{\prime }\left( t\right) =\int_{\partial D\left( t\right) }\frac{1}{%
\left\vert \nabla \rho \right\vert }\ \ \ \ \ \text{and \ \ }\chi ^{\prime
}\left( t\right) =\int_{\partial D\left( t\right) }\frac{S}{\left\vert
\nabla \rho \right\vert }.
\end{equation*}

Let us now recall the logarithmic Sobolev inequality from \cite{CN}, which
holds for any compactly supported Lipschitz function $u$ on $M.$ 
\begin{equation}
\int_{M}u^{2}\log u^{2}-\left( \int_{M}u^{2}\right) \log \left(
\int_{M}u^{2}\right) \leq \mu _{0}\int_{M}u^{2}+4\int_{M}\left\vert \nabla
u\right\vert ^{2}+\int_{M}Su^{2}.  \label{i1}
\end{equation}

For $t\geq 2,$ we define $u_{t}:\mathbb{R}\rightarrow \mathbb{R}$ by 
\begin{equation}
u_{t}\left( s\right) =\left\{ 
\begin{array}{c}
1 \\ 
s-\left( t-1\right) \\ 
0%
\end{array}%
\right. 
\begin{array}{l}
\text{on }s\geq t \\ 
\text{on }t-1\leq s\leq t \\ 
\text{on }0\leq s\leq t-1%
\end{array}
\label{i1'}
\end{equation}%
and a function on $M$ by $u_{t}\left( x\right) :=u_{t}\left( \rho \left(
x\right) \right) .$

Using the fact that $S\geq 0$ (see \cite{Ch, Ca1}) and $\chi (\infty
)=\int_{M}S\leq \frac{n}{2}Vol(M)<\infty $ $\ $(see \cite{CZ}), one easily
justifies that such $u_t$ is admissible in the preceding logarithmic Sobolev
inequality (\ref{i1}). Let us denote 
\begin{equation*}
y\left( t\right) :=\int_{M}u_{t}^2.
\end{equation*}%
The log-Sobolev inequality applied to $u_{t}$ implies%
\begin{equation*}
-y\left( t\right) \log y\left( t\right) \leq C\left( Vol\left( M\right)
-V\left( t-1\right) \right) +\int_{M}Su_{t}^{2},
\end{equation*}%
where $C$ depends only on $n$ and the Perelman's invariant $\mu _{0}.$ We
have also used above the elementary inequality $u_{t}^{2}\log u_{t}^{2}\geq -%
\frac{1}{e}.$

Since $y\left( t\right) \geq Vol\left( M\right) -V\left( t\right) ,$ we
obtain%
\begin{equation}
-y\left( t\right) \log y\left( t\right) \leq Cy\left( t-1\right)
+\int_{M}Su_{t}^{2}.  \label{i2}
\end{equation}%
We now wish to express the term $\int_{M}Su_{t}^{2}$ from (\ref{i2}) in
terms of $y\left( t\right) $. For any $T>t,$ since $S=\frac{n}{2}-\Delta f,$
we have%
\begin{eqnarray}
\int_{D\left( T\right) }Su_{t}^{2} &=&\frac{n}{2}\int_{D\left( T\right)
}u_{t}^{2}-\int_{D\left( T\right) }\left( \Delta f\right) \cdot u_{t}^{2}
\label{i3} \\
&=&\frac{n}{2}\int_{D\left( T\right) }u_{t}^{2}+\int_{D\left( T\right)
}\left\langle \nabla f,\nabla u_{t}^{2}\right\rangle -\int_{\partial D\left(
T\right) }\frac{\partial f}{\partial \nu }u_{t}^{2},  \notag
\end{eqnarray}%
where $\nu $ is the unit normal to $\partial D\left( t\right) .$ In fact,
since $\frac{\partial }{\partial \nu }=\frac{\nabla \rho }{\left\vert \nabla
\rho \right\vert },$ it follows that $\frac{\partial f}{\partial \nu }=\frac{%
1}{2}\rho \left\vert \nabla \rho \right\vert \geq 0.$ Moreover, observe that 
$\left\langle \nabla f,\nabla u_{t}^{2}\right\rangle $ has support in $%
D\left( t\right) \backslash D\left( t-1\right) $ and in that region we have 
\begin{eqnarray*}
\left\langle \nabla f,\nabla u_{t}^{2}\right\rangle &=&2\left\langle \nabla
f,\nabla \rho \right\rangle u_{t}=\rho \left\vert \nabla \rho \right\vert
^{2}u_{t} \\
&\leq &\rho u_{t}.
\end{eqnarray*}%
Using this in (\ref{i3}) we find that%
\begin{equation*}
\int_{D\left( T\right) }Su_{t}^{2}\leq \frac{n}{2}\int_{D\left( T\right)
}u_{t}^{2}+\int_{D\left( t\right) \backslash D\left( t-1\right) }\rho u_{t}.
\end{equation*}%
Hence, we can let $T\rightarrow \infty $ in the above and get%
\begin{equation}
\int_{M}Su_{t}^{2}\leq \frac{n}{2}y\left( t\right) +t\int_{D\left( t\right)
\backslash D\left( t-1\right) }u_{t}.  \label{i4}
\end{equation}%
By a direct calculation it follows that%
\begin{equation}
\frac{d}{dt}y\left( t\right) =\frac{d}{dt}\int_{M}u_{t}^{2}=-2\int_{D\left(
t\right) \backslash D\left( t-1\right) }u_{t}.  \label{i4'}
\end{equation}

Therefore, combining this with (\ref{i4}) we get 
\begin{equation*}
\int_{M}Su_{t}^{2}\leq \frac{n}{2}y\left( t\right) -\frac{1}{2}ty^{\prime
}\left( t\right) .
\end{equation*}

We use this in (\ref{i2}) to conclude that 
\begin{equation}
ty^{\prime }\left( t\right) -2y\left( t\right) \log y\left( t\right) \leq
Cy\left( t-1\right)  \label{i10}
\end{equation}%
for a constant $C$ depending only on $n$ and the Perelman's invariant $\mu
_{0}.$ This inequality is true for $t\geq c\left( n\right) ,$ where $c\left(
n\right) $ is a large enough constant so that $D\left( t\right) $ are
non-empty.

In the following, we will use this differential inequality to show that the
function $y(t)$ decays exponentially with arbitrarily large exponent. Here,
our argument is inspired by \cite{PSSW}. We first show $y(t)$ is of
exponential decay of some order. We let 
\begin{equation*}
\delta :=e^{-C}>0,
\end{equation*}%
where $C$ is the constant from (\ref{i10}). There exists $\varepsilon >0$
sufficiently small so that 
\begin{equation*}
y\left( \frac{1}{\varepsilon }\right) <\delta e^{-2}.
\end{equation*}%
Indeed, this is because $Vol\left( M\right) <\infty ,$ hence 
\begin{equation*}
\lim_{t\rightarrow \infty }y\left( t\right) =0.
\end{equation*}%
Clearly, we can assume $e^{\varepsilon }<2$. Let us define $t_{0}:=\frac{1}{%
\varepsilon }.$ We claim that 
\begin{equation}
y\left( t\right) <\delta e^{-\varepsilon t},\ \ \ \ \text{for any }t_{0}\leq
t\leq t_{0}+1.  \label{i11}
\end{equation}%
Indeed, by (\ref{i4'}) we know that $y\left( t\right) $ is decreasing in $t,$
therefore 
\begin{equation*}
y\left( t\right) \leq y\left( t_{0}\right) =y\left( \frac{1}{\varepsilon }%
\right) <\delta e^{-2},
\end{equation*}%
for any $t_{0}\leq t\leq t_{0}+1.$ However, since $t_{0}\varepsilon =1,$ we
can write 
\begin{equation*}
\delta e^{-2}=\delta e^{-2\varepsilon t_{0}}<\delta e^{-\varepsilon t},
\end{equation*}%
where the last inequality is true because $t\leq t_{0}+1 < 2t_{0}.$ Hence,
this proves that (\ref{i11}) is true.

Now we claim that 
\begin{equation}
y\left( t\right) <\delta e^{-\varepsilon t},  \label{i12}
\end{equation}%
for any $t_{0}\leq t.$ If (\ref{i12}) fails to be true for all $t\geq t_{0},$
there exists a first $t=r$ so that $y\left( r\right) =\delta e^{-\varepsilon
r}.$ Then the choice of $r$ implies 
\begin{eqnarray*}
y\left( r\right) &=&\delta e^{-\varepsilon r} \\
y^{\prime }\left( r\right) &\geq &-\varepsilon \delta e^{-\varepsilon r}.
\end{eqnarray*}

Since (\ref{i12}) is true for $t\leq t_{0}+1,$ we know that $r-1\geq t_{0}.$
Consequently, $y\left( r-1\right) \leq \delta e^{-\varepsilon \left(
r-1\right) }.$ Now (\ref{i10}) for $t=r$ implies that%
\begin{eqnarray*}
-\varepsilon \delta re^{-\varepsilon r}+2\delta e^{-\varepsilon r}\left(
-\log \delta +\varepsilon r\right) &\leq &ry^{\prime }\left( r\right)
-2y\left( r\right) \log y\left( r\right) \\
&\leq &Cy\left( t-1\right) \\
&\leq &C\delta e^{\varepsilon }e^{-\varepsilon r}.
\end{eqnarray*}%
Simplifying this gives%
\begin{equation*}
\varepsilon r-2\log \delta \leq Ce^{\varepsilon }.
\end{equation*}%
However, $\varepsilon r\geq \varepsilon t_{0}=1$ and $e^{\varepsilon }<2$
from the choice of $\varepsilon .$ Since by definition $\delta :=e^{-C},$
this is a contradiction.

Hence (\ref{i12}) is true for all $t\geq t_{0}.$ In particular, 
\begin{equation}
y\left( t\right) \leq e^{-\varepsilon t}\ \ \text{for all }t\geq t_{0}.
\label{i14}
\end{equation}

We have thus shown that $y(t)$ decays exponentially. We now show that $y(t)$
has arbitrarily large exponential decay rate.

Let us prove by induction on $m$ that there exists $t_{m}$ such that 
\begin{equation}
y\left( t\right) \leq e^{-\left( \frac{3}{2}\right) ^{m}\varepsilon t}\text{
\ for all }t\geq t_{m}.  \label{i14'}
\end{equation}

Clearly, (\ref{i14}) means that (\ref{i14'}) is true for $m=0.$ We now
assume that (\ref{i14'}) is true for $m\geq 0$ and prove so for $m+1.$

For a constant $t_{m+1}\geq t_{m}$ to be picked later, let $A$ be a constant
so that 
\begin{equation*}
y\left( t\right) <Ae^{-\frac{7}{4}\left( \frac{3}{2}\right) ^{m}\varepsilon
t},\ \ \text{for all }t_{m+1}\leq t\leq t_{m+1}+1.
\end{equation*}%
If this inequality holds true for all $t\geq t_{m+1},$ then by possibly
renaming $t_{m+1}$ to be a larger number, (\ref{i14'}) holds for $m+1$ and
the induction is complete. Otherwise, there exists the first $r>t_{m+1}+1$
so that $y\left( r\right) =Ae^{-\frac{7}{4}\left( \frac{3}{2}\right)
^{m}\varepsilon r}.$ Then, 
\begin{eqnarray}
y\left( r\right) &=&Ae^{-\frac{7}{4}\left( \frac{3}{2}\right)
^{m}\varepsilon r}  \label{i14''} \\
y^{\prime }\left( r\right) &\geq &-\frac{7}{4}\left( \frac{3}{2}\right)
^{m}A\varepsilon e^{-\frac{7}{4}\left( \frac{3}{2}\right) ^{m}\varepsilon r}.
\notag
\end{eqnarray}%
Therefore, by (\ref{i10}) we get 
\begin{equation}
-\frac{7}{4}\left( \frac{3}{2}\right) ^{m}A\varepsilon e^{-\frac{7}{4}\left( 
\frac{3}{2}\right) ^{m}\varepsilon r}\leq 2\frac{1}{r}y\left( r\right) \log
y\left( r\right) +\frac{C}{r}y\left( r-1\right) .  \label{i14'''}
\end{equation}%
Using the induction hypothesis, we know 
\begin{equation*}
2\log y\left( r\right) \leq -2\left( \frac{3}{2}\right) ^{m}\varepsilon r.
\end{equation*}%
From (\ref{i14''}) and (\ref{i14'''}), it follows that 
\begin{equation*}
-\frac{7}{4}\left( \frac{3}{2}\right) ^{m}A\varepsilon e^{-\frac{7}{4}\left( 
\frac{3}{2}\right) ^{m}\varepsilon r}\leq -\left( 2\left( \frac{3}{2}\right)
^{m}\varepsilon -\frac{C}{r}e^{\frac{7}{4}\left( \frac{3}{2}\right)
^{m}\varepsilon }\right) Ae^{-\frac{7}{4}\left( \frac{3}{2}\right)
^{m}\varepsilon r}.
\end{equation*}%
After simplifying this inequality, we get 
\begin{equation*}
\frac{1}{4}\left( \frac{3}{2}\right) ^{m}\varepsilon \leq \frac{C}{r}e^{%
\frac{7}{4}\left( \frac{3}{2}\right) ^{m}\varepsilon }\leq \frac{C}{t_{m+1}+1%
}e^{\frac{7}{4}\left( \frac{3}{2}\right) ^{m}\varepsilon }.
\end{equation*}%
This is not possible if we pick $t_{m+1}$ large enough. Therefore,

\begin{equation*}
y\left( t\right) <Ae^{-\frac{7}{4}\left( \frac{3}{2}\right) ^{m}\varepsilon
t},\ \ \text{for all }t\geq t_{m+1}.
\end{equation*}%
By choosing an even larger $t_{m+1}$ if necessary, we have 
\begin{equation*}
y\left( t\right) <e^{-\left( \frac{3}{2}\right) ^{m+1}\varepsilon t},\ \ 
\text{for all }t\geq t_{m+1}.
\end{equation*}%
In other words, (\ref{i14'}) is true for any $m.$ Consequently, for any $%
a>0, $ 
\begin{equation}
y\left( t\right) \leq C\left( a\right) e^{-at}\ \ \ \text{for all }t.
\label{i16}
\end{equation}

However, by Lemma \ref{Decay}, one sees that $y(t)$ decays at most
exponentially with a fixed rate. Indeed, we may choose $c(n)>0$ such that\
the geodesic ball 
\begin{equation*}
B_{x}(1)\subset D(t+c(n)+1)\setminus D(t-c(n)-1),
\end{equation*}%
where $d\left( p,x\right) =t$. Then, by Lemma \ref{Decay}, 
\begin{equation*}
y(t-c(n)-2)\geq V(B_{x}(1))\geq C_{1}(n)\,V(B_{p}(1))\,e^{-C_{2}(n)t},
\end{equation*}%
and this contradicts (\ref{i16}). This contradiction necessarily implies $%
Vol(M)=\infty .$ The proof is completed.
\end{proof}

We are now ready to prove Theorem \ref{Vol_Shrink}.

\begin{proof}[Proof of Theorem \protect\ref{Vol_Shrink}]
Recall $D\left( r\right) :=\left\{ \rho \leq r\right\} ,$ where $\rho :=2%
\sqrt{f}.$ According to \cite{CZ}, 
\begin{equation*}
d\left( p,x\right) -a\leq \rho \left( x\right) \leq d\left( p,x\right) +a.
\end{equation*}
with constant $a$ depending only on the dimension $n.$ So we may choose $%
r_{0}>100$ only depending on $n$ such that $D(r)$ has positive measure for $%
r\geq r_{0}.$

Let $V\left( t\right) :=Vol\left( D\left( t\right) \right) $ and $\chi
\left( t\right) :=\int_{D\left( t\right) }S.$ Then by \cite{CZ},

\begin{equation*}
\chi \left( t\right) \leq \frac{n}{2}V\left( t\right)
\end{equation*}%
for any $t>0.$ To prove Theorem \ref{Vol_Shrink}, it is sufficient to show
that $V\left(r\right) \geq C\,r$ for all $r\geq 2\,r_{0}$ for some positive
constant $C$ depending only on $n$ and $\mu _{0}.$

We are going to prove this by contradiction. Let us assume that for $%
\varepsilon >0$ small depending only on $n$ and $\mu _{0}$ there exists $%
r\geq 2r_{0}$ such that 
\begin{equation}
V\left( r\right) \leq \varepsilon r.  \label{vs0}
\end{equation}%
The choice of $\varepsilon $ will be made clear in the proof. Without loss
of generality, we may assume that $r\in \mathbb{N}$. Consider the following
set of positive integers:%
\begin{equation*}
\Omega :=\left\{ k\in \mathbb{N}\text{ }:\ V\left( t\right) \leq
2\varepsilon t\ \ \text{for all integers }r\leq t\leq k\right\} .
\end{equation*}

Clearly, $r\in \Omega .$ We now prove that in fact any integer $k\geq r$ is
in $\Omega ,$ which follows from the following claim.

\begin{claim}
\label{clm1} $k+1\in \Omega $ whenever $k\in \Omega .$
\end{claim}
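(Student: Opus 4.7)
The plan is to prove Claim \ref{clm1} by contradiction: assume $V(k+1) > 2\varepsilon(k+1)$ together with the inductive hypothesis $V(t) \leq 2\varepsilon t$ for integers $r \leq t \leq k$, and derive a contradiction for $\varepsilon$ chosen sufficiently small in terms of $n$ and $\mu_0$. Following the template of Lemma \ref{Infinite}, I would apply the logarithmic Sobolev inequality (\ref{i1}) to a Lipschitz cutoff $u_t = \phi_t(\rho)$ patterned on (\ref{i1'}), and combine it with the soliton identity $\int_M S u_t^2 = \tfrac{n}{2}\|u_t\|_2^2 + \int_M \langle \nabla f, \nabla u_t^2 \rangle$ derived from $S+\Delta f = n/2$, as in (\ref{i3})--(\ref{i10}).

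Since $V(M) = \infty$ by Lemma \ref{Infinite}, the bare choice $y(t) := \int u_t^2$ of Lemma \ref{Infinite} is infinite here, so I would instead use a two-sided cutoff $u_{t,R}$ supported in $D(R) \setminus D(t-1)$, carry out the analogue of (\ref{i10}) for $y_R(t) := \int u_{t,R}^2$, and pass to the limit $R \to \infty$. The outer boundary contributions at $\partial D(R)$ come with a Gaussian factor $e^{-R^2/4}$ originating from $f(x) \geq \tfrac{1}{4}(\rho(x)-c)^2$ \cite{CZ}, which, when paired with the Cao-Zhou polynomial upper bound $V(R) = O(R^n)$, kills these terms uniformly in $t \in [r+1,k+1]$. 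What remains is an effective (limit) differential inequality analogous to (\ref{i10}) for $y(t)$ over the range $r+1 \leq t \leq k+1$.

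Iterating the resulting inequality across the integer points $r+1,\ldots,k+1$ and using the inductive hypothesis to control $y(t-1)$ on the right-hand side in terms of $\varepsilon t$, I would extract a bound on the shell volume of the form $V(k+1) - V(k) \leq C(n,\mu_0)\,\varepsilon$. Choosing $\varepsilon$ small enough that $C(n,\mu_0) \leq 2$ then yields $V(k+1) \leq V(k) + 2\varepsilon \leq 2\varepsilon k + 2\varepsilon = 2\varepsilon(k+1)$, contradicting the standing hypothesis. The main obstacle will be the quantitative sharpness of this iteration---producing precisely the factor $2$ in the bound on the shell volume rather than a larger constant---together with the uniform control of the outer boundary terms as $R \to \infty$; both should be tractable via the Gaussian decay coming from $f \sim \rho^2/4$ and by absorbing the remaining constants into the freedom to shrink $\varepsilon$ in terms of $n$ and $\mu_0$.
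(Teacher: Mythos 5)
There is a genuine gap here, and it lies precisely where you flag "the main obstacle." Your strategy is to run the iteration to a shell estimate $V(k+1)-V(k)\leq C(n,\mu_0)\varepsilon$ and then ``choose $\varepsilon$ small enough that $C(n,\mu_0)\leq 2$.'' But $C(n,\mu_0)$ is a fixed constant independent of $\varepsilon$: shrinking $\varepsilon$ cannot make it smaller. Without some additional leverage the bound $V(k+1)\leq V(k)+C\varepsilon$ simply does not close the induction when $C>2$. The paper's argument supplies exactly that leverage, and you have not identified it. One chooses a \emph{compactly supported annular} cutoff $u$ with $\operatorname{supp} u\subset D(t+2)\setminus D(t-1)$, $u\equiv 1$ on $D(t+1)\setminus D(t)$. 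Because the support volume is small (of order $\varepsilon$ by the inductive hypothesis), the term $-\bigl(\int u^2\bigr)\log\bigl(\int u^2\bigr)$ in the log-Sobolev inequality (\ref{vs}) produces a factor $\log(2C_2\varepsilon)^{-1}$ \emph{multiplying} the shell volume on the left-hand side of (\ref{vs1}). Summing (\ref{vs5'}) telescopically from $t=r$ to $t=k$ and invoking Cao--Zhou's comparison (\ref{vs2}) and $\chi\leq\frac{n}{2}V$ then gives, as in (\ref{v5''}),
\begin{equation*}
V(k+1)\ \leq\ V(r)\,\frac{\log(2C_2\varepsilon)^{-1}}{\log(2C_2\varepsilon)^{-1}-C_3},
\end{equation*}
and it is the ratio on the right (which tends to $1$ as $\varepsilon\to 0$) that you get to make $\leq 2$ by shrinking $\varepsilon$. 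This is the mechanism you are missing.

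Your two-sided cutoff with $R\to\infty$ also does not produce the limit you need. If $u_{t,R}$ is essentially $1$ on $D(R)\setminus D(t)$, then $\int u_{t,R}^2\approx V(R)-V(t)\to\infty$ since $\mathrm{Vol}(M)=\infty$ by Lemma \ref{Infinite}, and the log-Sobolev inequality degenerates: the term $-\bigl(\int u^2\bigr)\log\bigl(\int u^2\bigr)$ tends to $-\infty$, so the inequality becomes vacuous. The difficulty is not the outer boundary flux (which your Gaussian-decay argument would indeed handle), but the divergence of $\|u\|_2^2$ itself. The annular cutoff avoids this entirely; there is no $R\to\infty$ limit to take. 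Finally, note that the paper does not estimate a single shell: it sums over $t=r,\dots,k$ so the telescoping produces $V(k+1)-V(r)$ on the left, and the conclusion $V(k+1)\leq 2\varepsilon r\leq 2\varepsilon(k+1)$ follows from the bound $V(r)\leq\varepsilon r$ of (\ref{vs0}), not from an estimate on $V(k+1)-V(k)$.
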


Most of our argument is devoted to proving Claim \ref{clm1}. The following
logarithmic Sobolev inequality established by Carillo and Ni \cite{CN} will
again be central to our proof. 
\begin{equation}
\int_{M}u^{2}\log u^{2}-\left( \int_{M}u^{2}\right) \left( \log
\int_{M}u^{2}\right) \leq \mu
_{0}\int_{M}u^{2}+\int_{M}Su^{2}+4\int_{M}\left\vert \nabla u\right\vert ^{2}
\label{vs}
\end{equation}%
for any $u\in C_{0}^{\infty }\left( M\right) ,$ where $\mu _{0}$ is the
Perelman's invariant. Note that the scalar curvature $S\geq 0$ by \cite{Ch,
Ca1}.

For $t\geq 2r_{0},$ define function $u$ by 
\begin{equation*}
u\left( x\right) =\left\{ 
\begin{array}{c}
1 \\ 
t+2-\rho \left( x\right) \\ 
\rho \left( x\right) -\left( t-1\right) \\ 
0%
\end{array}%
\right. 
\begin{array}{l}
\text{on }D\left( t+1\right) \backslash D\left( t\right) \\ 
\text{on }D\left( t+2\right) \backslash D\left( t+1\right) \\ 
\text{on }D\left( t\right) \backslash D\left( t-1\right) \\ 
\text{otherwise}%
\end{array}%
\end{equation*}%
Obviously, $u$ is Lipschitz with compact support. Plugging $u$ into the
preceding logarithmic Sobolev inequality and noting that $x\log x\geq -\frac{%
1}{e}$ for any $x>0,$ we conclude

\begin{gather}
-\left( \int_{M}u^{2}\right) \log \left( V\left( t+2\right) -V\left(
t-1\right) \right) \leq C_{0}\left( V\left( t+2\right) -V\left( t-1\right)
\right)  \label{vs1} \\
+\left( \chi \left( t+2\right) -\chi \left( t-1\right) \right) ,  \notag
\end{gather}%
where $C_{0}:=\mu _{0}+4+\frac{1}{e}.$

On the other hand, according to \cite{CZ},%
\begin{equation}
\frac{V\left( t+1\right) }{\left( t+1\right) ^{n}}-\frac{V\left( t\right) }{%
t^{n}}\leq 4\frac{\chi \left( t+1\right) }{\left( t+1\right) ^{n+2}},\text{
\ for any }t>\sqrt{2\left( n+2\right) }.  \label{vs2}
\end{equation}%
Since $\chi \left( t\right) \leq \frac{n}{2}V\left( t\right) ,$ we get from (%
\ref{vs2}) that 
\begin{eqnarray*}
V\left( t+1\right) -V\left( t\right) &\leq &\left( t+1\right) ^{n}\left( 
\frac{V\left( t\right) }{t^{n}}+2n\frac{V\left( t+1\right) }{\left(
t+1\right) ^{n+2}}\right) -V\left( t\right) \\
&=&\frac{\left( t+1\right) ^{n}-t^{n}}{t^{n}}V\left( t\right) +2n\frac{%
V\left( t+1\right) }{\left( t+1\right) ^{2}}.
\end{eqnarray*}%
Observe that 
\begin{equation*}
\frac{\left( t+1\right) ^{n}-t^{n}}{t^{n}}\leq \frac{2^{n}}{t}.
\end{equation*}%
So we have 
\begin{equation}
V\left( t+1\right) -V\left( t\right) \leq 2^{n}\frac{V\left( t\right) }{t}+2n%
\frac{V\left( t+1\right) }{\left( t+1\right) ^{2}}.  \label{vs3}
\end{equation}%
In particular, there exists $C\left( n\right) $ depending only on $n$ such
that 
\begin{equation}
V\left( t+1\right) \leq 2V\left( t\right)  \label{vs4}
\end{equation}%
for all $t\geq C\left( n\right) .$

Plugging this back into (\ref{vs3}), we find that 
\begin{equation}
V\left( t+1\right) -V\left( t\right) \leq C_{1}\frac{V\left( t\right) }{t}
\label{vs5}
\end{equation}%
for all $t\geq C\left( n\right) ,$ where $C_{1}$ depends only on $n.$
Moreover, using (\ref{vs4}) and (\ref{vs5}) or, alternatively using the same
argument as above, we also obtain 
\begin{equation*}
V\left( t+2\right) -V\left( t-1\right) \leq C_{2}\frac{V\left( t\right) }{t}
\end{equation*}%
for all $t\geq C\left( n\right) ,$ where $C_{2}$ depends only on $n.$

Now for all integers $r\leq t\leq k,$ note that $t\in \Omega .$ So $\frac{%
V\left( t\right) }{t}\leq 2\,\varepsilon ,$ which leads to 
\begin{eqnarray*}
V\left( t+1\right) -V\left( t\right) &\leq &2C_{1}\varepsilon , \\
V\left( t+2\right) -V\left( t-1\right) &\leq &2C_{2}\varepsilon .
\end{eqnarray*}%
Plugging this into (\ref{vs1}), we arrive at 
\begin{gather}
\left( V\left( t+1\right) -V\left( t\right) \right) \log \left(
2C_{2}\varepsilon \right) ^{-1}\leq C_{0}\left( V\left( t+2\right) -V\left(
t-1\right) \right)  \label{vs5'} \\
+\left( \chi \left( t+2\right) -\chi \left( t-1\right) \right)  \notag
\end{gather}%
provided $\varepsilon $ is chosen to satisfy $2C_{2}\varepsilon <1.$

Iterating (\ref{vs5'}) from $t=r$ to $t=k$ and summing up all the resulting
inequalities, we get 
\begin{equation*}
\left( V\left( k+1\right) -V\left( r\right) \right) \log \left(
2C_{2}\varepsilon \right) ^{-1}\leq 3C_{0}\left( V\left( k+2\right) +\chi
\left( k+2\right) \right) .
\end{equation*}%
Using again that $\chi \left( k+2\right) \leq \frac{n}{2}V\left( k+2\right) $
and also (\ref{vs4}), we obtain that 
\begin{equation*}
\left( V\left( k+1\right) -V\left( r\right) \right) \log \left(
2C_{2}\varepsilon \right) ^{-1}\leq C_{3}V\left( k+1\right) ,
\end{equation*}%
where $C_{3}$ depends only on dimension $n$ and the Perelman's invariant $%
\mu _{0}.$ Rearranging the terms, and using (\ref{vs0}), we get%
\begin{eqnarray}
V\left( k+1\right) &\leq &V\left( r\right) \frac{\log \left(
2C_{2}\varepsilon \right) ^{-1}}{\log \left( 2C_{2}\varepsilon \right)
^{-1}-C_{3}}  \label{v5''} \\
&\leq &\varepsilon \,r\frac{\log \left( 2C_{2}\varepsilon \right) ^{-1}}{%
\log \left( 2C_{2}\varepsilon \right) ^{-1}-C_{3}}.  \notag
\end{eqnarray}

Let us choose $\varepsilon $ small enough, depending on $n$ and $\mu _{0}$,
so that 
\begin{equation*}
\frac{\log \left( 2C_{2}\varepsilon \right) ^{-1}}{\log \left(
2C_{2}\varepsilon \right) ^{-1}-C_{3}}\leq 2.
\end{equation*}
From (\ref{v5''}) we conclude that 
\begin{equation}
V\left( k+1\right) \leq 2\varepsilon r,\ \ \ \ \text{for any }k\in \Omega .
\label{vs6}
\end{equation}%
Since $r\leq \left( k+1\right) ,$ by (\ref{vs6}) this proves Claim \ref{clm1}%
.

We have thus proved that%
\begin{equation*}
\Omega =\left\{ k\in \mathbb{N\ }:\text{ \ }k\geq r\right\} .
\end{equation*}
However, (\ref{vs6}) now implies that $V\left( k\right) \leq 2\varepsilon r,$
for any integer $k\geq r.$ This implies that the volume of $M$ is finite,
which is a contradiction to Lemma \ref{Infinite}.

This contradiction indicates there exists no such $r>r_{0}$ such that $%
V\left( r\right) \leq \varepsilon \,r$ with the $\varepsilon >0$ chosen in
the preceding argument, which depends only on $n$ and $\mu _{0}.$ That is, $%
V\left( r\right) \geq \varepsilon \,r$ \ for $r>r_{0}.$ Theorem \ref%
{Vol_Shrink} is proved.
\end{proof}

{\small DEPARTMENT OF MATHEMATICS, COLUMBIA UNIVERSITY }

{\small NEW YORK, NY 10027}\newline
{\small E-mail address: omuntean@math.columbia.edu}

\bigskip

{\small SCHOOL OF MATHEMATICS, UNIVERSITY OF MINNESOTA }

{\small MINNEAPOLIS, MN 55455} \newline
{\small E-mail address: jiaping@math.umn.edu}

\end{document}